\numberwithin{equation}{section}
\newtheorem{theorem}{Theorem}[section]
\newtheorem{lemma}{Lemma}[section]
\newtheorem{proposition}{Proposition}[section]
\newtheorem{remark}{Remark}[section]
\newtheorem{assumption}{Assumption}[section]
\def\ba{\boldsymbol{a}}
\def\bb{\boldsymbol{b}}
\def\bc{\boldsymbol{c}}
\def\bg{\boldsymbol{g}}
\def\bi{\boldsymbol{i}}
\def\bl{\boldsymbol{l}}
\def\bt{\boldsymbol{t}}
\def\bx{\boldsymbol{x}}
\def\bnu{\boldsymbol{\nu}}
\def\btheta{\boldsymbol{\theta}}
\def\bzero{\mathbf{0}}
\def\bone{\mathbf{1}}
\def\calD{\mathcal{D}}
\def\scrI{\mathscr{I}}
\def\spr{\mbox{\rm spr}}
\def\cp{\mbox{\rm cp}}
\title{Asymptotics of the occupation measure defined on a nonnegative matrix of two-dimensional quasi-birth-and-death type}
\author{Toshihisa Ozawa  \\ 
Faculty of Business Administration, Komazawa University \\
1-23-1 Komazawa, Setagaya-ku, Tokyo 154-8525, Japan \\
E-mail: toshi@komazawa-u.ac.jp
}
\date{}
\begin{document}

\maketitle

\begin{abstract}
We consider a nonnegative matrix having the same block structure as that of the transition probability matrix of a  two-dimensional quasi-birth-and-death process (2d-QBD process for short) and define two kinds of measure for the nonnegative matrix. One corresponds to the mean number of visits to each state before the 2d-QBD process starting from the level zero returns to the level zero for the first time. The other corresponds to the probabilities that the 2d-QBD process starting from each state visits the level zero. We call the former the occupation measure and the latter the hitting measure. 
We obtain asymptotic properties of the occupation measure such as the asymptotic decay rate in an arbitrary direction. Those of the hitting measure can be obtained from the results for the occupation measure by using a kind of duality between the two measures. 
%


\smallskip
\textit{Keywards}: quasi-birth-and-death process, Markov additive process, occupation measure, asymptotic decay rate, matrix analytic method

\smallskip
{\it Mathematics Subject Classification}: 60J10, 60K25
\end{abstract}

%
%
\section{Introduction} \label{sec:intro}

We consider a nonnegative matrix having the same block structure as that of the transition probability matrix of a  two-dimensional quasi-birth-and-death process (2d-QBD process for short) and define two kinds of measure for the nonnegative matrix. One corresponds to the mean number of visits to each state before the 2d-QBD process starting from the level zero returns to the level zero for the first time. The other corresponds to the probabilities that the 2d-QBD process starting from each state visits the level zero. We call the former the occupation measure and the latter the hitting measure. 
The stationary distribution of a 2d-QBD process corresponds to the normalized occupation measure of the transition probability matrix, and its asymptotic properties have been studied in Ozawa \cite{Ozawa13}, Miyazawa \cite{Miyazawa15}, Ozawa and Kobayashi \cite{Ozawa18} and Ozawa \cite{Ozawa22,Ozawa23}, where the asymptotic decay rates and exact asymptotic functions of the stationary tail probabilities were obtained. In this paper, we demonstrate that the same results hold for the occupation measure defined on a nonnegative matrix of 2d-QBD type. Asymptotic properties of the hitting measure can be obtained from the results for the occupation measure by using a kind of duality between the two measures. 

In the following, we briefly explain our model and results. 
Denote by $\scrI_S$ the set of all the subsets of $\{1,2\}$, i.e., $\scrI_S=\{\emptyset,\{1\},\{2\},\{1,2\}\}$, and we use it as an index set. Divide $\mathbb{Z}_+^2$ into $2^2=4$ exclusive subsets defined as 
\[
\mathbb{B}^\alpha=\{\bx=(x_1,x_2)\in\mathbb{Z}_+^2; \mbox{$x_i>0$ for $i\in\alpha$, $x_i=0$ for $i\in\{1,2\}\setminus \alpha$} \},\ \alpha\in\scrI_S.  
\]
The class $\{\mathbb{B}^\alpha; \alpha\in\scrI_S\}$ is a partition of $\mathbb{Z}_+^2$. $\mathbb{B}^\emptyset$ is the set containing only the origin, and $\mathbb{B}^{\{1,2\}}$ is the set of all positive points in $\mathbb{Z}_+^2$. Let $S_0=\{1,2,...,s_0\}$ be a finite set. We call $S_0$ the set of \textit{phases}. 
%
\begin{figure}[t]
\begin{center}
\includegraphics[width=55mm,trim=0 0 0 0]{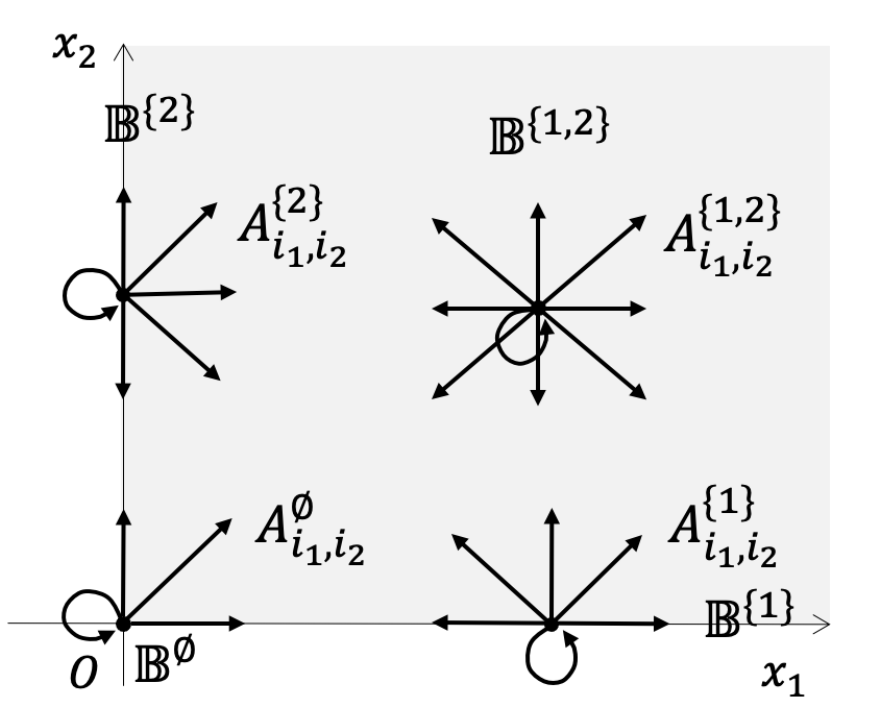} 
\caption{Transition probability blocks}
\label{fig:fig11}
\end{center}
\end{figure}
%
Let $T$ be a nonnegative matrix represented in block form as $T=\left( T_{\bx,\bx'}; \bx,\bx'\in\mathbb{Z}_+^2 \right)$, where $T_{\bx,\bx'}=(t_{(\bx,j),(\bx',j')}; j,j'\in S_0)$. The dimension of $T$ is $(\mathbb{Z}_+^2\times S_0)\times(\mathbb{Z}_+^2\times S_0)$. For fundamental definitions and properties with respect to nonnegative matrices, see Seneta \cite{Seneta06}.
We assume $T$ has the same block structure as that the transition probability matrix of a 2d-QBD process has. To be precise, for $\alpha\in\scrI_S$ and $i_1,i_2\in\{-1,0,1\}$, let $A^\alpha_{i_1,i_2}$ be an $s_0\times s_0$ nonnegative matrix, where $A^\emptyset_{i_1,-1}=A^\emptyset_{-1,i_2}=O$, $A^{\{1\}}_{i_1,-1}=O$ and $A^{\{2\}}_{-1,i_2}=O$ for every $i_1,i_2\in\{-1,0,1\}$, then $T_{\bx,\bx'}$ is given as 
\begin{equation}
T_{\bx,\bx'} 
= \left\{ \begin{array}{ll} 
A^\alpha_{\bx'-\bx}, & \mbox{if $\bx\in\mathbb{B}^\alpha$ for some $\alpha\in\scrI_S$ and $\bx'-\bx\in\{-1,0,1\}^2$}, \cr
O, & \mbox{otherwise},
\end{array} \right.
\end{equation}
where $O$ is a matrix of $0$'s whose dimension is determined in context. We assume the following condition throughout the paper. 
\begin{assumption} \label{as:QBD_irreducible}
The nonnegative matrix $T$ is irreducible and aperiodic. 
\end{assumption}

\begin{remark}
Since the number of positive elements of each row and column of $T$ is finite, we can apply the results for nonnegative matrices obtained in Ozawa \cite{Ozawa21} to $T$.
\end{remark}

%
Next, we define three nonnegative matrices induced from $T$: $T^{\{1\}}=(T^{\{1\}}_{\bx,\bx'};\bx,\bx'\in\mathbb{Z}\times\mathbb{Z}_+)$, $T^{\{2\}}=(T^{\{2\}}_{\bx,\bx'};\bx,\bx'\in\mathbb{Z}_+\times\mathbb{Z})$ and $T^{\{1,2\}}=(T^{\{1,2\}}_{\bx,\bx'};\bx,\bx'\in\mathbb{Z}\times\mathbb{Z})$, as follows.
\begin{align*}
&T^{\{1\}}_{\bx,\bx'} 
= \left\{ \begin{array}{ll} 
A^{\{1\}}_{\bx'-\bx}, & \mbox{if $\bx\in\mathbb{Z}\times\{0\}$ and $\bx'-\bx\in\{-1,0,1\}\times\{0,1\}$}, \cr
A^{\{1,2\}}_{\bx'-\bx}, & \mbox{if $\bx\in\mathbb{Z}\times\mathbb{N}$ and $\bx'-\bx\in\{-1,0,1\}^2$}, \cr
O, & \mbox{otherwise},
\end{array} \right. \\
&T^{\{2\}}_{\bx,\bx'} 
= \left\{ \begin{array}{ll} 
A^{\{2\}}_{\bx'-\bx}, & \mbox{if $\bx\in\{0\}\times\mathbb{Z}$ and $\bx'-\bx\in\{0,1\}\times\{-1,0,1\}$}, \cr
A^{\{1,2\}}_{\bx'-\bx}, & \mbox{if $\bx\in\mathbb{N}\times\mathbb{Z}$ and $\bx'-\bx\in\{-1,0,1\}^2$}, \cr
O, & \mbox{otherwise},
\end{array} \right. \\
&T^{\{1,2\}}_{\bx,\bx'} 
= \left\{ \begin{array}{ll} 
A^{\{1,2\}}_{\bx'-\bx}, & \mbox{if $\bx'-\bx\in\{-1,0,1\}^2$}, \cr
O, & \mbox{otherwise},
\end{array} \right.
\end{align*}
where $\mathbb{N}$ is the set of all positive integers. The nonnegative matrices $T^{\{1\}}$, $T^{\{2\}}$ and $T^{\{1,2\}}$ correspond to the transition probability matrices of the Markov additive processes induced from a 2d-QBD process, {\it see Section 1 of Ozawa \cite{Ozawa22}}. 
We assume the following condition throughout the paper. 
\begin{assumption} \label{as:MAprocess_irreducible}
The induced nonnegative matrices $T^{\{1\}}$, $T^{\{2\}}$ and $T^{\{1,2\}}$ are irreducible and aperiodic. 
\end{assumption}

We consider representations of Markov-additive-process-type (MAP-type for short) for $T^{\{1\}}$ and $T^{\{2\}}$. They are given by $\bar T^{\{1\}}=\left(\bar T^{\{1\}}_{x_1,x_1'};x_1,x_1'\in\mathbb{Z} \right)$ and $\bar T^{\{2\}}=\left(\bar T^{\{2\}}_{x_2,x_2'};x_2,x_2'\in\mathbb{Z} \right)$, where 
\begin{align*}
&\bar T^{\{1\}}_{x_1,x_1'} 
= \left\{ \begin{array}{ll} 
\bar A^{\{1\}}_{x_1'-x_1}, & \mbox{if $x_1'-x_1\in\{-1,0,1\}$}, \cr
O, & \mbox{otherwise},
\end{array} \right. \\[5pt]
&\bar A^{\{1\}}_i = \left( \bar A^{\{1\}}_{i,(x_2,x_2')}; x_2,x_2'\in\mathbb{Z}_+ \right),\\
&\bar A^{\{1\}}_{i,(x_2,x_2')} = \left\{ \begin{array}{ll}
 A^{\{1\}}_{i,x_2'-x_2}, & \mbox{if $x_2=0$ and $x_2'-x_2\in\{0,1\}$}, \cr
 A^{\{1,2\}}_{i,x_2'-x_2}, & \mbox{if $x_2\ge 1$ and $x_2'-x_2\in\{-1,0,1\}$}, \cr
 O, & \mbox{otherwise}, 
 \end{array} \right. 
 \end{align*}
 \begin{align*}
 &\bar T^{\{2\}}_{x_2,x_2'} 
= \left\{ \begin{array}{ll} 
\bar A^{\{2\}}_{x_2'-x_2}, & \mbox{if $x_2'-x_2\in\{-1,0,1\}$}, \cr
O, & \mbox{otherwise},
\end{array} \right. \\[5pt]
&\bar A^{\{2\}}_i = \left( \bar A^{\{2\}}_{i,(x_1,x_1')}; x_1,x_1'\in\mathbb{Z}_+ \right),\\
&\bar A^{\{2\}}_{i,(x_1,x_1')} = \left\{ \begin{array}{ll}
 A^{\{2\}}_{x_1'-x_1,i}, & \mbox{if $x_1=0$ and $x_1'-x_1\in\{0,1\}$}, \cr
 A^{\{1,2\}}_{x_1'-x_1,i}, & \mbox{if $x_1\ge 1$ and $x_1'-x_1\in\{-1,0,1\}$}, \cr
 O, & \mbox{otherwise}.
 \end{array} \right.
\end{align*}
Let $\bar A^{\{1\}}_*(z_1)$, $\bar A^{\{2\}}_*(z_2)$ and $A^{\{1,2\}}_{*,*}(z_1,z_2)$ be the matrix-valued generating functions defined as
\begin{align*}
&\bar A^{\{1\}}_*(z_1) = \sum_{i_1\in\{-1,0,1\}} z_1^{i_1} \bar A^{\{1\}}_{i_1},\quad 
\bar A^{\{2\}}_*(z_2) = \sum_{i_2\in\{-1,0,1\}} z_2^{i_2} \bar A^{\{2\}}_{i_2}, \\
&A^{\{1,2\}}_{*,*}(z_1,z_2) = \sum_{i_1,i_2\in\{-1,0,1\}} z_1^{i_1} z_2^{i_2} A^{\{1,2\}}_{i_1,i_2}. 
\end{align*}
Let $\Gamma^{\{1\}}$, $\Gamma^{\{2\}}$ and $\Gamma^{\{1,2\}}$ be regions in which  the convergence parameters of $\bar A^{\{1\}}_*(e^{\theta_1})$, $\bar A^{\{2\}}_*(e^{\theta_2})$ and $A^{\{1,2\}}_{*,*}(e^{\theta_1},e^{\theta_2})$ are greater than $1$, respectively, i.e., 
\begin{align*}
&\Gamma^{\{1\}} = \{(\theta_1,\theta_2)\in\mathbb{R}^2; \cp(\bar A^{\{1\}}_*(e^{\theta_1}))>1 \},\quad 
\Gamma^{\{2\}} = \{(\theta_1,\theta_2)\in\mathbb{R}^2; \cp(\bar A^{\{2\}}_*(e^{\theta_2}))>1 \}, \\
&\Gamma^{\{1,2\}} = \{(\theta_1,\theta_2)\in\mathbb{R}^2; \cp(A^{\{1,2\}}_{*,*}(e^{\theta_1},e^{\theta_2}))>1 \}, 
\end{align*}
where the convergence parameter of a nonnegative matrix $A$ with a finite or countable dimension is denoted by $\cp(A)$, i.e., $\cp(A) = \sup\{r\in\mathbb{R}_+; \sum_{n=0}^\infty r^n A^n<\infty,\ \mbox{entry-wise} \}$. 
We also define $\Gamma_0^{\{1\}}$ and $\Gamma_0^{\{2\}}$ as 
\begin{align*}
&\Gamma_0^{\{1\}} = \{\theta_1\in\mathbb{R}; \cp(\bar A^{\{1\}}_*(e^{\theta_1}))>1 \},\quad 
\Gamma_0^{\{2\}} = \{\theta_2\in\mathbb{R}; \cp(\bar A^{\{2\}}_*(e^{\theta_2}))>1 \}.
\end{align*}
By Lemma A.1 of  Ozawa \cite{Ozawa21}, $\cp(\bar A^{\{1\}}_*(e^\theta))^{-1}$ and $\cp(\bar A^{\{2\}}_*(e^\theta))^{-1}$ are log-convex in $\theta$, and the closures of $\Gamma^{\{1\}}$ and $\Gamma^{\{2\}}$ are convex sets; $\cp(\bar A^{\{1,2\}}_*(e^{\theta_1},e^{\theta_2}))^{-1}$ is also log-convex in $(\theta_1,\theta_2)$, and the closure of $\Gamma^{\{1,2\}}$ is a convex set. Furthermore, by Proposition B.1 of Ozawa \cite{Ozawa21}, $\Gamma^{\{1,2\}}$ is bounded under Assumption \ref{as:MAprocess_irreducible}. 
%
%
%
%
We also define the reverse of the regions as follows: for $\alpha\in\{\{1\},\{2\},\{1,2\}\}$, 
\[
\Gamma^\alpha_{R} =  \{(\theta_1,\theta_2)\in\mathbb{R}^2; (-\theta_1,-\theta_2)\in\Gamma^\alpha \}.
\]

%
For $(x_1,x_2)\in\mathbb{Z}^2$, define a set of states, $\mathbb{L}_{(x_1,x_2)}$, as 
\[
\mathbb{L}_{(x_1,x_2)} =\{(x_1,x_2,j); j\in S_0\}, 
\]
and call it level $(x_1,x_2)$. We call $\mathbb{L}_{(0,0)}$ the level zero. Let $\hat T$ be the nonnegative block matrix obtained by removing the $\mathbb{L}_{(0,0)}$-row and $\mathbb{L}_{(0,0)}$-column from $T$, i.e., 
\[
\hat T = \left( T_{\bx,\bx'}; \bx,\bx'\in\mathbb{Z}_+^2\setminus\{(0,0)\} \right).
\]
Furthermore, define nonnegative block vectors $\hat\bt_{01}$ and $\hat\bt_{10}$ as 
\[
\hat\bt_{01} = \left( T_{(0,0),\bx'}, \bx'\in\mathbb{Z}_+^2\setminus\{(0,0)\} \right),\quad 
\hat\bt_{10} = \left( T_{\bx,(0,0)}, \bx\in\mathbb{Z}_+^2\setminus\{(0,0)\} \right).
\]
We define the occupation measure $\hat\bnu=\left( \hat\bnu_{\bx}, \bx\in\mathbb{Z}_+^2\setminus\{(0,0)\} \right)$ of $T$, which corresponds to the mean number of visits to each state before a 2d-QBD process starting from the level zero returns to the level zero for the first time, as 
\begin{equation}
\hat\bnu = \hat\bt_{01} \hat\Phi = \hat\bt_{01} \sum_{n=0}^\infty \hat T^n, 
\label{eq:hatbnu_definition}
\end{equation}
where $\hat\Phi$ is the potential matrix of $\hat T$. For $\bx\in\mathbb{Z}_+^2\setminus\{(0,0)\}$, $\hat\bnu_{\bx}$ is represented as 
\[
\hat\bnu_{\bx}=\begin{pmatrix} \hat\nu_{(0,0,j_0)(\bx,j)}; j_0,j\in S_0 \end{pmatrix}.
\]
We also define the hitting measure $\hat\bg=\left( \hat\bg_{\bx}, \bx\in\mathbb{Z}_+^2\setminus\{(0,0)\} \right)$ of $T$, which corresponds to the probabilities that the 2d-QBD process starting from each state visits the level zero, as 
\begin{equation}
\hat\bg = \hat\Phi \hat\bt_{10}  = \sum_{n=0}^\infty \hat T^n\, \hat\bt_{10}. 
\label{eq:hatg_definition}
\end{equation}
For $\bx\in\mathbb{Z}_+^2\setminus\{(0,0)\}$, $\hat\bg_{\bx}$ is represented as 
\[
\hat\bg_{\bx}=\begin{pmatrix} \hat g_{(\bx,j),(0,0,j_0)}; j, j_0\in S_0 \end{pmatrix}. 
\]
We assume the following condition throughout the paper. 
\begin{assumption} \label{as:hatPhi_finite}
The potential matrix $\hat\Phi$ is entry-wise finite.
\end{assumption}

\begin{remark}
We do not consider conditions that ensure Assumption \ref{as:hatPhi_finite} hold true. We just consider what kinds of asymptotic property of the occupation measure and hitting measure hold if the potential matrix $\hat\Phi$ is entry-wise finite. 
\end{remark}
For $s_1,s_2\in\mathbb{R}$, define regions $\Lambda^{\{1\}}(s_1)$ and $\Lambda^{\{2\}}(s_2)$ as 
\[
\Lambda^{\{1\}}(s_1) = \{(\theta_1,\theta_2)\in\mathbb{R}^2; \theta_1<s_1\},\quad 
\Lambda^{\{2\}}(s_2) = \{(\theta_1,\theta_2)\in\mathbb{R}^2; \theta_2<s_2\}, 
\]
and consider the following optimization problem:
\begin{align*}
\begin{array}{ll}
\mbox{maximize} & s_1+s_2, \cr
\mbox{subject to} & s_1 \le \sup\left\{\theta_1\in\mathbb{R}; (\theta_1,\theta_2)\in \Gamma^{\{1\}}\cap\Lambda^{\{2\}}(s_2)\cap\Gamma^{\{1,2\}}\ \mbox{for some $\theta_2\in\mathbb{R}$} \right\}, \cr
&  s_2 \le \sup\left\{\theta_2\in\mathbb{R}; (\theta_1,\theta_2)\in\Lambda^{\{1\}}(s_1)\cap\Gamma^{\{2\}}\cap\Gamma^{\{1,2\}}\ \mbox{for some $\theta_1\in\mathbb{R}$} \right\}.
\end{array}
\end{align*}
If $\Gamma^{\{1\}}\cap\Gamma^{\{2\}}\cap\Gamma^{\{1,2\}}\ne\emptyset$, this optimization problem has a unique optimal solution $(s_1^*,s_2^*)$. 
Replacing $\Gamma^{\{1\}}$, $\Gamma^{\{2\}}$ and $\Gamma^{\{1,2\}}$ with $\Gamma_R^{\{1\}}$, $\Gamma_R^{\{2\}}$ and $\Gamma_R^{\{1,2\}}$, respectively, we also define the optimal solution $(s_{R,1}^*,s_{R,2}^*)$, analogously. 
%
In this paper, we demonstrate that, for every discrete direction vector $\bc\in\mathbb{Z}_+^2\setminus\{(0,0)\}$, the asymptotic decay rates of the occupation measure and hitting measure are given by 
\begin{align}
\lim_{n\to\infty} \frac 1 n \log \hat\nu_{(0,0,j_0),(n \bc,j)} 
&= -\min\Bigl\{ \sup\{\langle \bc,\btheta \rangle; \btheta\in\Lambda^{\{1\}}(s_1^*)\cap\Gamma^{\{1,2\}}\},\cr
&\qquad\qquad\qquad\qquad \sup\{\langle \bc,\btheta \rangle; \btheta\in\Lambda^{\{2\}}(s_2^*)\cap\Gamma^{\{1,2\}}\} \Bigr\}, \\
\lim_{n\to\infty} \frac 1 n \log \hat g_{(n \bc,j),(0,0,j_0)} 
&= -\min\Bigl\{ \sup\{\langle \bc,\btheta \rangle; \btheta\in\Lambda^{\{1\}}(s_{R,1}^*)\cap\Gamma^{\{1,2\}}_R\},\cr
&\qquad\qquad\qquad\qquad \sup\{\langle \bc,\btheta \rangle; \btheta\in\Lambda^{\{2\}}(s_{R,2}^*)\cap\Gamma^{\{1,2\}}_R\} \Bigr\}, 
\end{align}
where $\langle \ba,\bb \rangle$ is the inner product of vectors $\ba$ and $\bb$.
%
%
We prove them by using the same approach as that used in Ozawa \cite{Ozawa22,Ozawa23}. For related works, see Section 1 of \cite{Ozawa22}. 
Our model includes transient 2d-QBD processes. With respect to transient stochastic models, see, for example,  Franceschi et al.\ \cite{Franceschi24} and references therein. It deals with a transient reflected Brownian motion.

%
The rest of the paper is organized as follows. 
In Section \ref{sec:preliminary}, we introduce the potential matrices of the induced nonnegative matrices and give key equations that the occupation measure satisfy. We call them the compensation equations. 
In Section \ref{sec:convergence_domain}, we define several kinds of matrix-valued generating function for the occupation measure and give relations among the convergence domains of the matrix-valued generating functions. 
In Section \ref{sec:asymptotics_o}, we obtain the convergence domains of the matrix-valued generating functions for the occupation measure, see Theorems \ref{pr:calD1D2_eq} and \ref{pr:calD_eq}, and the asymptotic decay rates and functions for the occupation measure, see Theorems \ref{th:xi10xi01_eq} through \ref{th:h1c_eq}. 
In Section \ref{sec:asymptotics_h}, we consider a kind of duality between the occupation measure and hitting measure. 
The paper concludes with a remark about three-dimensional models in Section \ref{sec:conclusion}.

\medskip
\textit{Notation for vectors, matrices and sets.} 
%
%
For a matrix $A$, we denote by $[A]_{i,j}$ the $(i,j)$-entry of $A$ and by $A^\top$ the transpose of $A$.  
Similar notations are also used for vectors. 
%
%
For a finite square matrix $A$, we denote by $\spr(A)$ the spectral radius of $A$, which is the maximum modulus of eigenvalue of $A$. If $A$ is nonnegative, $\spr(A)$ corresponds to the Perron-Frobenius eigenvalue of $A$ and we have $\spr(A)=\cp(A)^{-1}$. 
%
$\bone$ is a column vector of $1$'s and $\bzero$ is a column vector of $0$'s; their dimensions, which are finite or countably infinite, are determined in context. $I$ is the identity matrix. 
%
%
%
For $n\in\mathbb{Z}$, $\mathbb{Z}_{\ge n}$ is the set of all integers grater than or equal to $n$, i.e., $\mathbb{Z}_{\ge n}=\{n,n+1,n+2, \cdots\}$. 
For a convex set $S$ on $\mathbb{R}^2$, $[S]^{ex}$ is an extension of $S$ defined as
\[
[S]^{ex} = \{ \bx\in\mathbb{R}^2; \mbox{$\bx<\bx'$ for some $\bx'\in S$} \}, 
\]
and $[S]_{\{1\}}$ and $[S]_{\{2\}}$ are the projections of $S$ defined as
\begin{align*}
&[S]_{\{1\}} = \{x_1\in\mathbb{R}; (x_1,x_2)\in S\ \mbox{for some $x_2\in\mathbb{R}$} \}, \\
&[S]_{\{2\}} = \{x_2\in\mathbb{R}; (x_1,x_2)\in S\ \mbox{for some $x_1\in\mathbb{R}$} \}.
\end{align*}
$\mathbb{C}$ is the set of all complex numbers. 
For $r\in\mathbb{R}_+$, $\Delta_r$ is the open disk of radius $r$ and center $0$ on $\mathbb{C}$. 
For $r_1,r_2\in\mathbb{R}_+$ such that $r_1<r_2$, $\Delta_{r_1,r_2}$ is the open annual domain on $\mathbb{C}$ defined as
\[
\Delta_{r_1,r_2} = \{z\in\mathbb{C}; r_1<|z|<r_2 \}.
\]
$\bar\Delta_{r_1,r_2}$ is the closure of $\Delta_{r_1,r_2}$.

%
%
\section{Preliminaries} \label{sec:preliminary}

%
\subsection{Potential matrices}

Let $\Phi^{\{1\}}=\begin{pmatrix}\Phi^{\{1\}}_{\bx,\bx'}\end{pmatrix}$, $\Phi^{\{2\}}=\begin{pmatrix}\Phi^{\{2\}}_{\bx,\bx'}\end{pmatrix}$ and $\Phi^{\{1,2\}}=\begin{pmatrix}\Phi^{\{1,2\}}_{\bx,\bx'}\end{pmatrix}$ be the potential matrices of $T^{\{1\}}$, $T^{\{2\}}$ and $T^{\{1,2\}}$, respectively, defined as
\[
\Phi^{\{1\}} = \sum_{n=0}^\infty (T^{\{1\}})^n,\quad 
\Phi^{\{2\}} = \sum_{n=0}^\infty (T^{\{2\}})^n,\quad 
\Phi^{\{1,2\}} = \sum_{n=0}^\infty (T^{\{1,2\}})^n, 
\]
and $\bar\Phi^{\{1\}}$ and $\bar\Phi^{\{2\}}$ the representations of MAP-type for $\Phi^{\{1\}}$ and $\Phi^{\{2\}}$, respectively, defined as 
\begin{align*}
\bar\Phi^{\{1\}} = \begin{pmatrix} \bar\Phi^{\{1\}}_{x_1,x_1'}; x_1,x_1'\in\mathbb{Z} \end{pmatrix},\quad 
\bar\Phi^{\{1\}}_{x_1,x_1'} = \begin{pmatrix} \Phi^{\{1\}}_{(x_1,x_2),(x_1',x_2')}; x_2,x_2'\in\mathbb{Z}_+ \end{pmatrix}, \\
\bar\Phi^{\{2\}} = \begin{pmatrix} \bar\Phi^{\{2\}}_{x_2,x_2'}; x_2,x_2'\in\mathbb{Z} \end{pmatrix},\quad 
\bar\Phi^{\{2\}}_{x_2,x_2'} = \begin{pmatrix} \Phi^{\{2\}}_{(x_1,x_2),(x_1',x_2')}; x_1,x_1'\in\mathbb{Z}_+ \end{pmatrix}.
\end{align*}
For $x_1,x_2\in\mathbb{Z}$, let $\bar\Phi^{\{1\}}_{x_1,*}(z)$, $\bar\Phi^{\{2\}}_{x_2,*}(w)$ and $\Phi^{\{1,2\}}_{(x_1,x_2),(*,*)}(z,w)$ be the matrix-valued generating functions defined as
\begin{align*}
&\bar\Phi^{\{1\}}_{x_1,*}(z) = \sum_{x_1'=-\infty}^\infty z^{x_1'} \bar\Phi^{\{1\}}_{x_1,x_1'},\quad 
\bar\Phi^{\{2\}}_{x_2,*}(w) = \sum_{x_2'=-\infty}^\infty w^{x_2'} \bar\Phi^{\{2\}}_{x_2,x_2'}, \\
&\Phi^{\{1,2\}}_{(x_1,x_2),(*,*)}(z,w) =\sum_{x_1'=-\infty}^\infty \sum_{x_2'=-\infty}^\infty z^{x_1'} w^{x_2'} \Phi^{\{1,2\}}_{(x_1,x_2),(x_1',x_2')}.
\end{align*}
Because of space-homogeneity, we have for every  $x_1,x_2\in\mathbb{Z}$ that
\begin{align}
&\bar\Phi^{\{1\}}_{x_1,*}(z) = z^{x_1} \bar\Phi^{\{1\}}_{0,*}(z),\\
&\bar\Phi^{\{2\}}_{x_2,*}(z) = w^{x_2} \bar\Phi^{\{2\}}_{0,*}(w),\\
&\Phi^{\{1,2\}}_{(x_1,x_2),(*,*)}(z,w) = z^{x_1} w^{x_2} \Phi^{\{1,2\}}_{(0,0),(*,*)}(z,w). \label{eq:Phi12_eq1}
\end{align}
By the definition of the matrix-valued generating functions, their convergence domains are given by $\Gamma_0^{\{1\}}$, $\Gamma_0^{\{2\}}$ and $\Gamma^{\{1,2\}}$, respectively, i.e., 
\begin{align}
&\Gamma_0^{\{1\}} = \mbox{the interior of } \{\theta_1\in\mathbb{R}; \bar\Phi^{\{1\}}_{0,*}(e^{\theta_1})<\infty,\ \mbox{entry-wise} \}, \label{eq:Gamma1_eq1} \\ 
&\Gamma_0^{\{2\}} = \mbox{the interior of } \{\theta_2\in\mathbb{R}; \bar\Phi^{\{2\}}_{0,*}(e^{\theta_2})<\infty,\ \mbox{entry-wise} \},  \label{eq:Gamma2_eq1} \\
&\Gamma^{\{1,2\}} = \mbox{the interior of } \{(\theta_1,\theta_2)\in\mathbb{R}^2; \Phi^{\{1,2\}}_{(0,0),(*,*)}(e^{\theta_1},e^{\theta_2})<\infty,\ \mbox{entry-wise} \}. 
\end{align}

%
\subsection{Compensation equations}

Let $\hat T^{\{1\}}=\begin{pmatrix}\hat T^{\{1\}}_{\bx,\bx'}; \bx,\bx'\in\mathbb{Z}\times\mathbb{Z}_+\end{pmatrix}$, $\hat T^{\{2\}}=\begin{pmatrix}\hat T^{\{2\}}_{\bx,\bx'}; \bx,\bx'\in\mathbb{Z}_+\times\mathbb{Z}\end{pmatrix}$ and $\hat T^{\{1,2\}}=\begin{pmatrix}\hat T^{\{1,2\}}_{\bx,\bx'}; \bx,\bx'\in\mathbb{Z}^2\end{pmatrix}$ be the hybrids of the nonnegative matrix $\hat T$ and the induced nonnegative matrices $T^{\{1\}}$, $T^{\{2\}}$ and $T^{\{1,2\}}$, respectively, defined as 
\begin{align*}
\hat T^{\{1\}}_{\bx,\bx'} 
&= \left\{ \begin{array}{ll} 
\hat T_{\bx,\bx'}, & \mbox{if $\bx,\bx'\in\mathbb{Z}_+^2\setminus\{(0,0)\}$}, \cr
T^{\{1\}}_{\bx,\bx'}, & \mbox{if $\bx\in\mathbb{Z}\setminus\mathbb{Z}_+\times\mathbb{Z}_+$ and $\bx'\in\mathbb{Z}\times\mathbb{Z}_+$}, \cr
O, & \mbox{otherwise},
\end{array} \right. 
\end{align*}
\begin{align*}
\hat T^{\{2\}}_{\bx,\bx'} 
&= \left\{ \begin{array}{ll} 
\hat T_{\bx,\bx'}, & \mbox{if $\bx,\bx'\in\mathbb{Z}_+^2\setminus\{(0,0)\}$}, \cr
T^{\{2\}}_{\bx,\bx'}, & \mbox{if $\bx\in\mathbb{Z}_+\times\mathbb{Z}\setminus\mathbb{Z}_+$ and $\bx'\in\mathbb{Z}_+\times\mathbb{Z}$}, \cr
O, & \mbox{otherwise},
\end{array} \right. 
\end{align*}
\begin{align*}
\hat T^{\{1,2\}}_{\bx,\bx'} 
&= \left\{ \begin{array}{ll} 
\hat T_{\bx,\bx'}, & \mbox{if $\bx,\bx'\in\mathbb{Z}_+^2\setminus\{(0,0)\}$}, \cr
T^{\{1,2\}}_{\bx,\bx'}, & \mbox{if $\bx\in\mathbb{Z}^2\setminus\mathbb{Z}_+^2$ and $\bx'\in\mathbb{Z}^2$}, \cr
O, & \mbox{otherwise}.
\end{array} \right. 
\end{align*}
To be precise, 
\begin{align*}
\hat T^{\{1\}}_{\bx,\bx'} 
&= \left\{ \begin{array}{ll} 
A^{\{1\}}_{\bx'-\bx}, & \mbox{if $\bx=(1,0)$ and $\bx'-\bx\in(\{-1,0,1\}\times\{0,1\})\setminus\{(-1,0)\}$}, \cr
A^{\{1\}}_{\bx'-\bx}, & \mbox{if $\bx\in(\mathbb{Z}\setminus\{0,1\})\times\{0\}$ and $\bx'-\bx\in\{-1,0,1\}\times\{0,1\}$}, \cr
A^{\{2\}}_{\bx'-\bx}, & \mbox{if $\bx=(0,1)$ and $\bx'-\bx\in(\{0,1\}\times\{-1,0,1\})\setminus\{(0,-1)\}$}, \cr
A^{\{2\}}_{\bx'-\bx}, & \mbox{if $\bx\in\{0\}\times\mathbb{Z}_{\ge 2}$ and $\bx'-\bx\in\{0,1\}\times\{-1,0,1\}$}, \cr
A^{\{1,2\}}_{\bx'-\bx}, & \mbox{if $\bx=(1,1)$ and $\bx'-\bx\in\{-1,0,1\}^2\setminus\{(-1,-1)\}$}, \cr
A^{\{1,2\}}_{\bx'-\bx}, & \mbox{if $\bx\in((\mathbb{Z}\setminus\{0\})\times\mathbb{N})\setminus\{(1,1)\}$ and $\bx'-\bx\in\{-1,0,1\}^2$}, \cr
O, & \mbox{otherwise},
\end{array} \right. 
\end{align*}
\begin{align*}
\hat T^{\{2\}}_{\bx,\bx'} 
&= \left\{ \begin{array}{ll} 
A^{\{1\}}_{\bx'-\bx}, & \mbox{if $\bx=(1,0)$ and $\bx'-\bx\in(\{-1,0,1\}\times\{0,1\})\setminus\{(-1,0)\}$}, \cr
A^{\{1\}}_{\bx'-\bx}, & \mbox{if $\bx\in\mathbb{Z}_{\ge 2}\times\{0\}$ and $\bx'-\bx\in\{-1,0,1\}\times\{0,1\}$}, \cr
A^{\{2\}}_{\bx'-\bx}, & \mbox{if $\bx=(0,1)$ and $\bx'-\bx\in(\{0,1\}\times\{-1,0,1\})\setminus\{(0,-1)\}$}, \cr
A^{\{2\}}_{\bx'-\bx}, & \mbox{if $\bx\in\{0\}\times(\mathbb{Z}\setminus\{0,1\})$ and $\bx'-\bx\in\{0,1\}\times\{-1,0,1\}$}, \cr
A^{\{1,2\}}_{\bx'-\bx}, & \mbox{if $\bx=(1,1)$ and $\bx'-\bx\in\{-1,0,1\}^2\setminus\{(-1,-1)\}$}, \cr
A^{\{1,2\}}_{\bx'-\bx}, & \mbox{if $\bx\in(\mathbb{N}\times(\mathbb{Z}\setminus\{0\}))\setminus\{(1,1)\}$ and $\bx'-\bx\in\{-1,0,1\}^2$}, \cr
O, & \mbox{otherwise},
\end{array} \right. 
\end{align*}
\begin{align*}
\hat T^{\{1,2\}}_{(\bx,\bx')} 
&= \left\{ \begin{array}{ll} 
A^{\{1\}}_{\bx'-\bx}, & \mbox{if $\bx=(1,0)$ and $\bx'-\bx\in(\{-1,0,1\}\times\{0,1\})\setminus\{(-1,0)\}$}, \cr
A^{\{1\}}_{\bx'-\bx}, & \mbox{if $\bx\in\mathbb{Z}_{\ge 2}\times\{0\}$ and $\bx'-\bx\in\{-1,0,1\}\times\{0,1\}$}, \cr
A^{\{2\}}_{\bx'-\bx}, & \mbox{if $\bx=(0,1)$ and $\bx'-\bx\in(\{0,1\}\times\{-1,0,1\})\setminus\{(0,-1)\}$}, \cr
A^{\{2\}}_{\bx'-\bx}, & \mbox{if $\bx\in\{0\}\times\mathbb{Z}_{\ge 2}$ and $\bx'-\bx\in\{0,1\}\times\{-1,0,1\}$}, \cr
A^{\{1,2\}}_{\bx'-\bx}, & \mbox{if $\bx=(1,1)$ and $\bx'-\bx\in\{-1,0,1\}^2\setminus\{(-1,-1)\}$}, \cr
A^{\{1,2\}}_{\bx'-\bx}, & \mbox{if $\bx\in\mathbb{Z}^2\setminus(\{0\}\times\mathbb{Z}_+\cup\mathbb{Z}_+\times\{0\}\cup\{(1,1)\})$ and $\bx'-\bx\in\{-1,0,1\}^2$}, \cr
O, & \mbox{otherwise}.
\end{array} \right. 
\end{align*}
Let $\hat\bnu^{\{1\}}=\begin{pmatrix}\hat \bnu^{\{1\}}_{\bx}; \bx\in\mathbb{Z}\times\mathbb{Z}_+\end{pmatrix}$, $\hat\bnu^{\{2\}}=\begin{pmatrix}\hat \bnu^{\{2\}}_{\bx}; \bx\in\mathbb{Z}_+\times\mathbb{Z}\end{pmatrix}$ and $\hat\bnu^{\{1,2\}}=\begin{pmatrix}\hat \bnu^{\{1,2\}}_{\bx}; \bx\in\mathbb{Z}^2\end{pmatrix}$ be the nonnegative block vectors defined as, for $\alpha\in\{\{1\},\{2\},\{1,2\}\}$, 
\begin{align*}
\hat\bnu^\alpha_{\bx} 
&= \left\{ \begin{array}{ll} 
\hat\bnu_{\bx}, & \bx\in\mathbb{Z}_+^2\setminus\{(0,0)\}, \cr
O, & \mbox{otherwise}. 
\end{array} \right. 
\end{align*}
Furthermore, let $\hat\bt^{\{1\}}_{01}=\begin{pmatrix}\hat \bt^{\{1\}}_{01,\bx}; \bx\in\mathbb{Z}\times\mathbb{Z}_+\end{pmatrix}$, $\hat\bt^{\{2\}}_{01}=\begin{pmatrix}\hat \bt^{\{2\}}_{01,\bx}; \bx\in\mathbb{Z}_+\times\mathbb{Z}\end{pmatrix}$ and $\hat\bt^{\{1,2\}}_{01}=\begin{pmatrix}\hat \bt^{\{1,2\}}_{01,\bx}; \bx\in\mathbb{Z}^2\end{pmatrix}$ be the nonnegative block vectors defined as, for $\alpha\in\{\{1\},\{2\},\{1,2\}\}$, 
\begin{align*}
\hat\bt^\alpha_{01,\bx} 
&= \left\{ \begin{array}{ll} 
T_{(0,0),\bx}, & \bx\in\mathbb{Z}_+^2\setminus\{(0,0)\}, \cr
O, & \mbox{otherwise}. 
\end{array} \right. 
\end{align*}
Then, we obtain equations called compensation ones in Ozawa \cite{Ozawa22}, as follows.
\begin{lemma} \label{le:conpensation_eq}
For $\alpha\in\{\{1\},\{2\},\{1,2\}\}$, 
\begin{equation}
\hat\bnu^\alpha = \hat\bnu^\alpha (\hat T^\alpha-T^\alpha) \Phi^\alpha + \hat\bt^\alpha_{01} \Phi^\alpha. 
\label{eq:compensation_eq}
\end{equation}
\end{lemma}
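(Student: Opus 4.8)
The plan is to derive the compensation equation \eqref{eq:compensation_eq} from the definition $\hat\bnu = \hat\bt_{01}\sum_{n=0}^\infty \hat T^n$ by exploiting the fact that, on the restricted index set $\mathbb{Z}_+^2\setminus\{(0,0)\}$, the hybrid matrix $\hat T^\alpha$ agrees with $\hat T$. Concretely, the matrices $\hat T^\alpha$ and $T^\alpha$ are both defined on the larger index set ($\mathbb{Z}\times\mathbb{Z}_+$, $\mathbb{Z}_+\times\mathbb{Z}$, or $\mathbb{Z}^2$), and the block vectors $\hat\bnu^\alpha$ and $\hat\bt^\alpha_{01}$ are the zero-extensions of $\hat\bnu$ and $\hat\bt_{01}$ to that larger index set. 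The key algebraic identity I would establish first is that $\hat\bnu^\alpha$ is the (unique, under Assumption~\ref{as:hatPhi_finite}) solution of
\begin{equation*}
\hat\bnu^\alpha = \hat\bnu^\alpha \hat T^\alpha + \hat\bt^\alpha_{01},
\end{equation*}
i.e.\ $\hat\bnu^\alpha = \hat\bt^\alpha_{01}\sum_{n=0}^\infty (\hat T^\alpha)^n$. This follows because $\hat\bt^\alpha_{01}$ is supported on $\mathbb{Z}_+^2\setminus\{(0,0)\}$, and whenever a row index $\bx$ lies in $\mathbb{Z}_+^2\setminus\{(0,0)\}$ the row $\hat T^\alpha_{\bx,\cdot}$ coincides with $\hat T_{\bx,\cdot}$ and is supported again on $\mathbb{Z}_+^2\setminus\{(0,0)\}$ (the $(0,0)$-entries having been removed); hence starting from $\hat\bt^\alpha_{01}$ and iterating $\hat T^\alpha$ never leaves the set $\mathbb{Z}_+^2\setminus\{(0,0)\}$, so $\hat\bt^\alpha_{01}\sum_n(\hat T^\alpha)^n$ restricted there equals $\hat\bt_{01}\sum_n \hat T^n = \hat\bnu$, and is zero elsewhere. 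This also shows the series converges entry-wise, using Assumption~\ref{as:hatPhi_finite}.

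Next I would rewrite $\hat\bnu^\alpha \hat T^\alpha$ as $\hat\bnu^\alpha T^\alpha + \hat\bnu^\alpha(\hat T^\alpha - T^\alpha)$, so that the fixed-point identity becomes
\begin{equation*}
\hat\bnu^\alpha (I - T^\alpha) = \hat\bnu^\alpha(\hat T^\alpha - T^\alpha) + \hat\bt^\alpha_{01}.
\end{equation*}
Then I would multiply on the right by $\Phi^\alpha = \sum_{n=0}^\infty (T^\alpha)^n$. For this last step to be valid I need $(I-T^\alpha)\Phi^\alpha = I$, which holds entry-wise provided $\Phi^\alpha$ is entry-wise finite; this is exactly the content of Assumption~\ref{as:MAprocess_irreducible} together with the standing finiteness hypotheses, since $T^{\{1\}},T^{\{2\}},T^{\{1,2\}}$ are the induced MAP-type matrices whose potential matrices are introduced in Section~\ref{sec:preliminary} (and whose convergence domains $\Gamma_0^{\{1\}},\Gamma_0^{\{2\}},\Gamma^{\{1,2\}}$ are assumed nonempty, i.e.\ contain $\bzero$ in their closure — more carefully, one uses that $\hat T^\alpha$ dominates nothing problematic and the relevant sub-stochastic-type structure makes the Neumann series telescope). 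Right-multiplying and using associativity of the (nonnegative, hence unconditionally summable) matrix products yields
\begin{equation*}
\hat\bnu^\alpha = \hat\bnu^\alpha(\hat T^\alpha - T^\alpha)\Phi^\alpha + \hat\bt^\alpha_{01}\Phi^\alpha,
\end{equation*}
which is \eqref{eq:compensation_eq}.

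The main obstacle is the justification of the manipulations with infinite nonnegative matrices: one must check that all the rearrangements (splitting $\hat T^\alpha = T^\alpha + (\hat T^\alpha - T^\alpha)$ — note $\hat T^\alpha - T^\alpha$ is \emph{not} nonnegative in general, since $\hat T^\alpha$ is obtained from $T^\alpha$ partly by \emph{deleting} the $(0,0)$-row/column and partly by \emph{adding} back the $\hat T$-block, so care is needed about whether the difference has a sign), right-multiplication by an infinite series, and reassociation of triple products are all legitimate. The cleanest route, which I would adopt, is to avoid ever forming $\hat T^\alpha - T^\alpha$ as a signed object in an infinite sum: instead, start from the nonnegative identity $\hat\bnu^\alpha = \hat\bnu^\alpha\hat T^\alpha + \hat\bt^\alpha_{01}$, substitute it into itself / iterate against $T^\alpha$ only finitely, and pass to the limit using monotone convergence (all terms nonnegative) together with the entry-wise finiteness of $\hat\Phi$ (Assumption~\ref{as:hatPhi_finite}) and of $\Phi^\alpha$. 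Equivalently, one verifies that for each fixed pair of indices only finitely many terms contribute at each stage because of the nearest-neighbour (skip-free) block structure of $T$, so convergence is automatic and the formal computation above is rigorous. A secondary point to state carefully is the block-structure bookkeeping establishing that the rows of $\hat T^\alpha$ indexed by $\mathbb{Z}_+^2\setminus\{(0,0)\}$ really do coincide with those of $\hat T$ and charge no mass to $\mathbb{L}_{(0,0)}$; this is immediate from the explicit "to be precise" descriptions of $\hat T^{\{1\}},\hat T^{\{2\}},\hat T^{\{1,2\}}$ given above, where every listed case has $\bx$ away from the boundary pieces that would reintroduce the origin.
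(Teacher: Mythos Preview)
Your approach is essentially the same as the paper's: both start from the fixed-point identity $\hat\bnu^\alpha = \hat\bnu^\alpha \hat T^\alpha + \hat\bt^\alpha_{01}$ (lifted from $\hat\bnu = \hat\bnu\hat T + \hat\bt_{01}$ via the zero-extension and the fact that $\hat T^\alpha$ agrees with $\hat T$ on rows indexed by $\mathbb{Z}_+^2\setminus\{(0,0)\}$), then insert the identity $I = (I-T^\alpha)\Phi^\alpha$ and rearrange. The paper compresses all of this into two displayed lines, citing only Fubini's theorem, whereas you spell out the block-structure bookkeeping and worry explicitly about the legitimacy of the signed splitting and the right-multiplication by an infinite series; but the underlying computation is identical.

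One small point: your justification for the finiteness of $\Phi^\alpha$ is muddled (Assumption~\ref{as:MAprocess_irreducible} is about irreducibility, not finiteness, and the nonemptiness of $\Gamma^{\{1,2\}}$ etc.\ does not by itself force the point $\btheta=\bzero$ to lie in the domain). In fact the paper does not isolate a finiteness hypothesis on $\Phi^\alpha$ here either; it simply writes $\hat\bnu^\alpha = \hat\bnu^\alpha(I-T^\alpha)\Phi^\alpha$ and appeals to Fubini, treating the identity as one between nonnegative (possibly infinite) quantities. Your instinct to route the argument through monotone convergence on nonnegative terms is the right way to make this rigorous, but you should drop the incorrect attribution to Assumption~\ref{as:MAprocess_irreducible}.
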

\begin{proof}
By \eqref{eq:hatbnu_definition}, we have 
\begin{equation}
\hat\bnu = \hat\bnu \hat T + \hat\bt_{01}.
\end{equation}
Hence, we obtain by the Fubini's theorem that 
\begin{align}
\hat\bnu^\alpha 
&= \hat\bnu^\alpha (I-T^\alpha) \Phi^\alpha \cr
&= \hat\bnu^\alpha (\hat T^\alpha-T^\alpha) \Phi^\alpha + \hat\bt^\alpha_{01} \Phi^\alpha. 
\end{align}
\end{proof}

%
By the definition of $\hat T^{\{1\}}$, $\hat T^{\{2\}}$ and $\hat T^{\{1,2\}}$, if $\bx\notin\{0\}\times\mathbb{Z}_+\cup\{(1,0),(1,1)\}$, then $T^{\{1\}}_{\bx,\bx'}=\hat T_{\bx,\bx'}$, if $\bx\notin\mathbb{Z}_+\times\{0\}\cup\{(0,1),(1,1)\}$, then $T^{\{2\}}_{\bx,\bx'}=\hat T_{\bx,\bx'}$, and if $\bx\notin\{0\}\times\mathbb{Z}_+\cup\mathbb{Z}_+\times\{0\}\cup\{(1,1)\}$, then $T^{\{1,2\}}_{\bx,\bx'}=\hat T_{\bx,\bx'}$. Hence, we obtain from Lemma \ref{le:conpensation_eq} that, for $\bx\in\mathbb{Z}\times\mathbb{Z}_+$, 
\begin{align}
\hat\bnu^{\{1\}}_{\bx} 
&= -\hat\bnu_{(1,0)} A^{\{1\}}_{-1,0} \Phi^{\{1\}}_{(0,0),\bx} -\hat\bnu_{(1,1)} A^{\{1,2\}}_{-1,-1} \Phi^{\{1\}}_{(0,0),\bx} \cr
&\quad + \sum_{k=1}^\infty\ \sum_{i_1,i_2\in\{-1,0,1\}} \hat\bnu_{(0,k)} (A^{\{2\}}_{i_1,i_2}-A^{\{1,2\}}_{i_1,i_2}) \Phi^{\{1\}}_{(i_1,k+i_2),\bx} - \hat\bnu_{(0,1)} A^{\{2\}}_{0,-1} \Phi^{\{1\}}_{(0,0),\bx},
\label{eq:hatnux1} 
\end{align}
for $\bx\in\mathbb{Z}_+\times\mathbb{Z}$, 
\begin{align}
\hat\bnu^{\{2\}}_{\bx} 
&= -\hat\bnu_{(0,1)} A^{\{2\}}_{0,-1} \Phi^{\{2\}}_{(0,0),\bx} -\hat\bnu_{(1,1)} A^{\{1,2\}}_{-1,-1} \Phi^{\{2\}}_{(0,0),\bx} \cr
&\quad + \sum_{k=1}^\infty\ \sum_{i_1,i_2\in\{-1,0,1\}} \hat\bnu_{(k,0)} (A^{\{1\}}_{i_1,i_2}-A^{\{1,2\}}_{i_1,i_2}) \Phi^{\{2\}}_{(k+i_1,i_2),\bx} - \hat\bnu_{(1,0)} A^{\{1\}}_{-1,0} \Phi^{\{2\}}_{(0,0),\bx},
\label{eq:hatnux2}
\end{align}
and for $\bx\in\mathbb{Z}\times\mathbb{Z}$, 
\begin{align}
\hat\bnu^{\{1,2\}}_{\bx} 
& = -\hat\bnu_{(1,1)} A^{\{1,2\}}_{-1,-1} \Phi^{\{1,2\}}_{(0,0),\bx} \cr
&\quad + \sum_{k=1}^\infty\ \sum_{i_1,i_2\in\{-1,0,1\}} \hat\bnu_{(0,k)} (A^{\{2\}}_{i_1,i_2}-A^{\{1,2\}}_{i_1,i_2}) \Phi^{\{1,2\}}_{(i_1,k+i_2),\bx} - \hat\bnu_{(0,1)} A^{\{2\}}_{0,-1} \Phi^{\{1,2\}}_{(0,0),\bx} \cr
&\quad + \sum_{k=1}^\infty\ \sum_{i_1,i_2\in\{-1,0,1\}} \hat\bnu_{(k,0)} (A^{\{1\}}_{i_1,i_2}-A^{\{1,2\}}_{i_1,i_2}) \Phi^{\{1,2\}}_{(k+i_1,i_2),\bx} - \hat\bnu_{(1,0)} A^{\{1\}}_{-1,0} \Phi^{\{1,2\}}_{(0,0),\bx}.
\label{eq:hatnux12}
\end{align}
Recall that every $A^{\{1\}}_{i_1,i_2}$ and $A^{\{2\}}_{i_1,i_2}$ corresponding to impossible transitions are assumed to be zero. Equations \eqref{eq:hatnux1} through \eqref{eq:hatnux12} play a crucial role in the following section.

%
%
\subsection{Asymptotic decay rates} \label{sec:decay_rate}

Let $\bc=(c_1,c_2)\in\mathbb{Z}_+^2\setminus\{(0,0)\}$ be an arbitrary discrete direction vector. For $j_0\in S_0$ and for $(\bx,j)\in\mathbb{Z}_+^2\times S_0$, define lower  and upper asymptotic decay rates $\underline{\xi}_{\bc,j_0}(\bx,j)$ and $\bar\xi_{\bc,j_0}(\bx,j)$ as 
\[
\underline{\xi}_{\bc,j_0}(\bx,j) = -\limsup_{n\to\infty} \frac{1}{n} \log \hat\nu_{(\bzero,j_0),(\bx+n\bc,j)},\quad 
\bar\xi_{\bc,j_0}(\bx,j) = -\liminf_{n\to\infty} \frac{1}{n} \log \hat\nu_{(\bzero,j_0),(\bx+n\bc,j)}.
\]
By the Cauchy-Hadamard theorem, the radius of convergence of the power series of the sequence $\{\hat\nu_{(\bzero,j_0),(\bx+n\bc,j)}\}_{n\ge 0}$ is given by $e^{\underline{\xi}_{\bc,j_0}(\bx,j)}$. If $\underline{\xi}_{\bc,j_0}(\bx,j)=\bar\xi_{\bc,j_0}(\bx,j)$, we denote them by $\xi_{\bc,j_0}(\bx,j)$ and call it the asymptotic decay rate. 
Under Assumption \ref{as:MAprocess_irreducible}, the following property holds.
\begin{proposition} \label{pr:xi_equality}
For every $(\bx,j),(\bx',j')\in\mathbb{Z}_+^2\times S_0$, $\underline{\xi}_{\bc,j_0}(\bx,j)=\underline{\xi}_{\bc,j_0}(\bx',j')$ and $\bar\xi_{\bc,j_0}(\bx,j)=\bar\xi_{\bc,j_0}(\bx',j')$. 
\end{proposition}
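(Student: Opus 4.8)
The plan is to show that the asymptotic decay rates $\underline{\xi}_{\bc,j_0}(\bx,j)$ and $\bar\xi_{\bc,j_0}(\bx,j)$ do not depend on the ``base point'' $(\bx,j)$ nor on the phase $j$, by exploiting the irreducibility and aperiodicity of $T^{\{1,2\}}$ (Assumption \ref{as:MAprocess_irreducible}) together with the fact that $\hat\nu$ dominates, in an appropriate sense, the potential entries of $T^{\{1,2\}}$ along the direction $\bc$. First I would fix $(\bx,j)$ and $(\bx',j')$ and produce a two-sided comparison: since $T$ is irreducible (Assumption \ref{as:QBD_irreducible}) and all its row/column supports are finite, there is a fixed path from $(\bx,j)$ to $(\bx',j')$ and another from $(\bx',j')$ to $(\bx,j)$, each of some finite length $m$ with strictly positive weight $\delta>0$; more carefully, since transitions only move each coordinate by at most one per step, I would instead compare states that are translates of each other along $\bc$ and let the residual displacement be absorbed into a bounded number of steps.

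The key step is the following pair of inequalities. For all sufficiently large $n$, using $\hat\Phi=\sum_k\hat T^k$ and inserting fixed finite ``bridging'' paths inside $\hat T$, one gets
\begin{equation*}
\hat\nu_{(\bzero,j_0),(\bx'+n\bc,j')} \;\ge\; \delta\,\hat\nu_{(\bzero,j_0),(\bx+(n-m)\bc,j)}
\end{equation*}
for constants $\delta>0$ and $m\in\mathbb{N}$ independent of $n$ (the path from a neighbourhood of $\bx+(n-m)\bc$ to $\bx'+n\bc$ stays in $\mathbb{Z}_+^2\setminus\{(0,0)\}$ once $n$ is large, because $\bc\in\mathbb{Z}_+^2\setminus\{(0,0)\}$ pushes the level away from the removed origin). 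Taking $\frac1n\log$ and letting $n\to\infty$ yields $\underline{\xi}_{\bc,j_0}(\bx',j')\le\underline{\xi}_{\bc,j_0}(\bx,j)$ and likewise for $\bar\xi$; by symmetry (swap the roles of the primed and unprimed states) the reverse inequalities hold, giving equality. The crucial use of Assumption \ref{as:MAprocess_irreducible} is to guarantee such bridging paths of \emph{uniformly bounded} length exist within the ``interior'' dynamics governed by $A^{\{1,2\}}_{i_1,i_2}$, so that the bridging weight $\delta$ and length $m$ can be chosen once and for all, uniformly in $n$.

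I would organize the argument in three steps: (i) reduce to comparing two states on the same ray by handling the bounded perpendicular offset separately; (ii) establish the one-step-path domination inequality above, invoking irreducibility of $T$ and of $T^{\{1,2\}}$ to obtain a fixed positive-weight path that, for large $n$, never touches $\mathbb{L}_{(0,0)}$ and hence lives inside $\hat T$; (iii) take logarithms, pass to the limit, and symmetrize. The main obstacle I anticipate is step (ii): one must be careful that the chosen bridging path between $(\bx+(n-m)\bc,j)$ and $(\bx'+n\bc,j')$ genuinely avoids the level zero for all large $n$ — this is where $\bc\ne(0,0)$ and $n\to\infty$ are essential — and that the path weight is bounded below by a constant free of $n$, which requires that the relevant sub-block structure encountered along the ray is the space-homogeneous $A^{\{1,2\}}$ block (true once both endpoints have both coordinates large). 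Phrasing this ``eventual homogeneity plus uniform irreducibility'' cleanly, rather than chasing the boundary blocks $A^\emptyset,A^{\{1\}},A^{\{2\}}$ case by case, is the part that needs the most care.
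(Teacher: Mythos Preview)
Your approach is correct and is essentially the same as the paper's: the paper omits the proof and refers to Proposition~2.3 of \cite{Ozawa22}, which is precisely the bridging-path argument you outline (insert a fixed-length, positive-weight path inside $\hat T$, obtain a one-sided inequality between $\hat\nu_{(\bzero,j_0),(\bx+(n-m)\bc,j)}$ and $\hat\nu_{(\bzero,j_0),(\bx'+n\bc,j')}$, take logarithms, and symmetrize).

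The one refinement worth absorbing from the paper concerns the boundary directions $\bc=(1,0)$ and $\bc=(0,1)$, which you correctly flag as the delicate case. There the ray $n\bc$ never leaves the axis, so ``eventual homogeneity in the $A^{\{1,2\}}$ blocks'' is not available. The paper's remedy is to pass to the MAP-type representation: for $\bc=(1,0)$ use the blocks $\bar A^{\{1\}}_i$ (which bundle the $x_2$-coordinate and phase into a single countable phase space), and for $\bc=(0,1)$ use $\bar A^{\{2\}}_i$. In that representation the dynamics along the axis are again space-homogeneous in the additive coordinate, and irreducibility of $T^{\{1\}}$ (resp.\ $T^{\{2\}}$) from Assumption~\ref{as:MAprocess_irreducible} supplies the uniform bridging path exactly as in the interior case. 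This cleanly sidesteps the case-by-case boundary-block chase you were worried about.
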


For $\bc\in\mathbb{N}^2$, this proposition can be proved in a manner similar to that used in the proof of Proposition 2.3 of Ozawa \cite{Ozawa22}. In the case where  $\bc=(1,0)$ and $\bc=(0,1)$, using $\bar A^{\{1\}}_i$ and $\bar A^{\{2\}}_i$ instead of $A^{\{1,2\}}_{i_1,i_2}$, respectively, it can also be proved. We, therefore, omit the proof of Proposition \ref{pr:xi_equality}. 
Hereafter, we denote $\underline{\xi}_{\bc,j_0}(\bx,j)$, $\bar\xi_{\bc,j_0}(\bx,j)$ and $\xi_{\bc,j_0}(\bx,j)$ by $\underline{\xi}_{\bc,j_0}$, $\bar\xi_{\bc,j_0}$ and $\xi_{\bc,j_0}$, respectively.

%
Let $\check T^{\{1\}}$ be a nonnegative block matrix derived from $T^{\{1\}}$ by restricting the state space of the level to $\mathbb{N}\times\mathbb{Z}_+$, i.e., $\check T^{\{1\}}=\begin{pmatrix}T^{\{1\}}_{\bx,\bx'};\bx,\bx'\in\mathbb{N}\times\mathbb{Z}_+\end{pmatrix}$. Analogously define $\check T^{\{2\}}$ and $\check T^{\{1,2\}}$, i.e., $\check T^{\{2\}}=\begin{pmatrix}T^{\{2\}}_{\bx,\bx'};\bx,\bx'\in\mathbb{Z}_+\times\mathbb{N}\end{pmatrix}$ and $\check T^{\{1,2\}}=\begin{pmatrix}T^{\{1,2\}}_{\bx,\bx'};\bx,\bx'\in\mathbb{N}^2\end{pmatrix}$.
For $\alpha\in\{ \{1\},\{2\},\{1,2\} \}$, let $\check{\Phi}^\alpha=\begin{pmatrix}\check{\Phi}^\alpha_{\bx,\bx'}\end{pmatrix}$ be the potential matrix of $\check T^\alpha$, i.e., 
\[
\check{\Phi}^\alpha = \sum_{n=0}^\infty \left(\check T^\alpha\right)^n.
\]
We assume the following condition throughout the paper.
\begin{assumption} \label{as:Y12_onZpZp_irreducible}
$\check T^{\{1\}}$, $\check T^{\{2\}}$ and $\check T^{\{1,2\}}$ are irreducible and aperiodic. 
\end{assumption}
This condition implies Assumption \ref{as:MAprocess_irreducible}. 
By Theorem 5.1 of Ozawa \cite{Ozawa21}, we have, for any direction vector $\bc\in\mathbb{N}^2$, every $\bx=(x_1,x_2)\in\mathbb{N}^2$ such that $x_1=1$ or $x_2=1$, every $\bl\in\mathbb{Z}_+^2$ and  every $j_1,j_2\in S_0$, 
\begin{equation}
\lim_{n\to\infty} \frac{1}{n} \log\, [\check{\Phi}^{\{1,2\}}_{\bx,n\bc+\bl}]_{j_1,j_2} = - \sup\{\langle \bc,\btheta \rangle; \btheta\in\Gamma^{\{1,2\}}\}. 
\label{eq:tildePhi_limit}
\end{equation}
Analogous results also hold in the case where  $\bc=(1,0)$ and $\bc=(0,1)$. Hence, we immediately obtain the following.
\begin{lemma} \label{le:decayrate_upper}
We have, for every $j_0\in S_0$,  
\begin{align}  
&\underline{\xi}_{(1,0),j_0}\le\bar\xi_{(1,0),j_0} \le \sup\{\theta_1\in\mathbb{R};(\theta_1,\theta_2)\in\Gamma^{\{1\}}\ \mbox{for some $\theta_2\in\mathbb{R}$} \}, \label{eq:xic_upper1}\\
&\underline{\xi}_{(0,1),j_0}\le\bar\xi_{(0,1),j_0} \le \sup\{\theta_2\in\mathbb{R};(\theta_1,\theta_2)\in\Gamma^{\{2\}}\ \mbox{for some $\theta_1\in\mathbb{R}$} \}, \label{eq:xic_upper2}
\end{align}
and for $\bc\in\mathbb{N}^2$, 
\begin{equation}  
\underline{\xi}_{\bc,j_0}\le\bar\xi_{\bc,j_0} \le \sup\{\langle \bc,\btheta \rangle; \btheta\in\Gamma^{\{1,2\}}\}.  
\label{eq:xic_upper}
\end{equation}
\end{lemma}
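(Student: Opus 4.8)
The plan is to derive the upper bounds directly from the compensation equations \eqref{eq:hatnux1}--\eqref{eq:hatnux12} together with the estimate \eqref{eq:tildePhi_limit} for the restricted potential matrices. The starting observation is that each block $\hat\bnu_{\bx}$ for $\bx$ in the interior or on a boundary axis can be written, via those compensation equations, as a finite linear combination of terms of the form $\hat\bnu_{\by}\,B\,\Phi^\alpha_{\by',\bx}$ where $B$ is one of the constant matrices $A^{\{1\}}_{-1,0}$, $A^{\{2\}}_{0,-1}$, $A^{\{1,2\}}_{-1,-1}$ or a difference $A^\beta_{i_1,i_2}-A^{\{1,2\}}_{i_1,i_2}$, and $\by,\by'$ range over the boundary faces. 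Since the summands are all nonnegative (differences aside, but those enter with matching signs as in the derivation of \eqref{eq:hatnux1}), one gets a clean lower bound $\hat\bnu_{\bx}\ge (\text{some nonnegative term})$, and the term that matters for the decay rate along $n\bc$ will be the one built from $\Phi^{\{1,2\}}$ (for $\bc\in\mathbb N^2$) or from $\Phi^{\{1\}}$, $\Phi^{\{2\}}$ (for $\bc=(1,0)$ or $(0,1)$ respectively).

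First I would reduce $\Phi^\alpha$ on the relevant index range to the restricted potential $\check\Phi^\alpha$. By Assumption \ref{as:Y12_onZpZp_irreducible} and irreducibility, $\check\Phi^{\{1,2\}}_{\bx,\bx'}\le\Phi^{\{1,2\}}_{\bx,\bx'}$ entry-wise (restricting the state space only removes paths), and conversely $\Phi^{\{1,2\}}_{\bx,\bx'}$ restricted to $\bx,\bx'\in\mathbb N^2$ is dominated by a constant times $\check\Phi^{\{1,2\}}$ shifted by a bounded offset, because any path contributing to $\Phi^{\{1,2\}}$ between two points of $\mathbb N^2$ visits the boundary $\mathbb Z^2\setminus\mathbb N^2$ only finitely often and, by space-homogeneity of $T^{\{1,2\}}$, each boundary excursion can be absorbed into a finite multiplicative constant and a bounded translation $\bl$. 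Then \eqref{eq:tildePhi_limit} gives $\frac1n\log[\Phi^{\{1,2\}}_{\by',n\bc+\bl}]_{j,j'}\to-\sup\{\langle\bc,\btheta\rangle;\btheta\in\Gamma^{\{1,2\}}\}$; combining with the compensation representation and using Proposition \ref{pr:xi_equality} to identify $\bar\xi_{\bc,j_0}$ with the limsup over any fixed $(\bx,j)$, one obtains $\bar\xi_{\bc,j_0}\le\sup\{\langle\bc,\btheta\rangle;\btheta\in\Gamma^{\{1,2\}}\}$. The cases $\bc=(1,0)$ and $\bc=(0,1)$ are handled the same way, replacing $\check\Phi^{\{1,2\}}$ by $\check\Phi^{\{1\}}$ and $\check\Phi^{\{2\}}$ and using the analogues of \eqref{eq:tildePhi_limit} mentioned after that display, which produce the one-sided suprema $\sup\{\theta_1;(\theta_1,\theta_2)\in\Gamma^{\{1\}}\text{ for some }\theta_2\}$ and $\sup\{\theta_2;(\theta_1,\theta_2)\in\Gamma^{\{2\}}\text{ for some }\theta_1\}$.

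The one subtlety to handle carefully is that $\bar\xi_{\bc,j_0}$ is defined as a $\liminf$ of $-\frac1n\log\hat\nu$, i.e.\ an upper decay rate, so I need an entry of $\hat\nu_{n\bc+\bx}$ that does \emph{not} decay faster than the target rate; this is exactly where I would invoke the compensation equation to get $\hat\bnu_{n\bc+\bx}$ bounded \emph{below} by a positive term with the right exponential order, using irreducibility of $\check T^\alpha$ to guarantee the relevant entries of $\check\Phi^\alpha$ are strictly positive for all large $n$. The main obstacle is precisely this domination of $\Phi^{\{1,2\}}$ (restricted to $\mathbb N^2\times\mathbb N^2$) by $\check\Phi^{\{1,2\}}$ up to a bounded shift and a constant — making the "finitely many boundary excursions, each absorbed into a constant" argument rigorous requires either a renewal-type decomposition of paths at their last visit to the boundary or a direct comparison of the matrix-valued generating functions and their convergence domains; everything else is bookkeeping with the compensation equations and an application of Proposition \ref{pr:xi_equality} and \eqref{eq:tildePhi_limit}.
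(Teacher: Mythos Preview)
Your route through the compensation equations \eqref{eq:hatnux1}--\eqref{eq:hatnux12} introduces a genuine gap: those identities involve terms of both signs (the leading minus signs, and the differences $A^\beta_{i_1,i_2}-A^{\{1,2\}}_{i_1,i_2}$, carry no sign constraint), so the claim that ``one gets a clean lower bound $\hat\bnu_\bx\ge(\text{some nonnegative term})$'' does not follow. You would need to show that the negative contributions are dominated by the positive ones at the correct exponential scale, and that is essentially as hard as the original statement. Note also that even if that step worked, your ``main obstacle'' (the bound $\Phi^{\{1,2\}}\le C\,\check\Phi^{\{1,2\}}$ up to shift) is not required: a lower bound on $\hat\bnu$ in terms of $\Phi^{\{1,2\}}$, combined with the trivial inequality $\Phi^{\{1,2\}}\ge\check\Phi^{\{1,2\}}$ and \eqref{eq:tildePhi_limit}, would already yield the conclusion.

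The paper's argument avoids $\Phi^{\{1,2\}}$ and the compensation equations entirely. The observation you are missing is that $\check T^{\{1,2\}}$ is exactly the restriction of $\hat T$ (not of $T^{\{1,2\}}$) to $\mathbb{N}^2\times S_0$: for $\bx\in\mathbb{N}^2=\mathbb{B}^{\{1,2\}}$ the blocks coincide, $T_{\bx,\bx'}=A^{\{1,2\}}_{\bx'-\bx}=T^{\{1,2\}}_{\bx,\bx'}$. Hence $\check\Phi^{\{1,2\}}_{\bx,\bx'}\le\hat\Phi_{\bx,\bx'}$ entry-wise on $\mathbb{N}^2$, and from the defining relation $\hat\bnu=\hat\bt_{01}\hat\Phi$ together with irreducibility (to pass from a neighbour of the origin into $\mathbb{N}^2$ in finitely many $\hat T$-steps) one obtains directly $\hat\nu_{(\bzero,j_0),(n\bc+\bl,j)}\ge c\,[\check\Phi^{\{1,2\}}_{(1,1),n\bc+\bl}]_{j',j}$ for some constant $c>0$ independent of $n$. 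Now \eqref{eq:tildePhi_limit} gives the bound. The axis cases $\bc=(1,0)$ and $\bc=(0,1)$ are identical, with $\check\Phi^{\{1\}}$ and $\check\Phi^{\{2\}}$ in place of $\check\Phi^{\{1,2\}}$, since $\check T^{\{1\}}$ and $\check T^{\{2\}}$ are likewise restrictions of $\hat T$ to $\mathbb{N}\times\mathbb{Z}_+$ and $\mathbb{Z}_+\times\mathbb{N}$ respectively. This is why the paper writes ``we immediately obtain''.
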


\begin{remark}
For $\bc\in\mathbb{Z}_+\setminus\{(0,0)\}$ and $j_0\in S_0$, if $\underline{\xi}_{\bc,j_0}=\bar\xi_{\bc,j_0}$, we have for every $n\in\mathbb{N}$ that $\xi_{n\bc,j_0}=n \xi_{\bc,j_0}$. 
\end{remark}

\begin{remark}
 For $\bc\in\mathbb{Z}_+^2\setminus\{(0,0)\}$ and $j_0,j\in S_0$, let $f_{\bc,j_0,j}(z)$ be the generating function of the sequence $\{\hat\nu_{(\bzero,j_0),(n\bc,j)}\}_{n\ge 1}$, i.e., $f_{\bc,j_0,j}(z)=\sum_{n=1}^\infty z^n \hat\nu_{(\bzero,j_0),(n\bc,j)}$. In Section \ref{sec:asymptotics_o}, we will see that the point $z=e^{\underline{\xi}_{\bc,j_0}}$ is a pole or algebraic branch point of the complex function $f_{\bc,j_0,j}(z)$. This leads us to $\underline{\xi}_{\bc,j_0}=\bar\xi_{\bc,j_0}$. 
\end{remark}

For $\bc\in\mathbb{Z}_+\setminus\{(0,0)\}$, let $\underline{\xi}_{\bc}$ be the minimum of $\underline{\xi}_{\bc,j_0}$ in $j_0\in S_0$ and $\xi_{\bc}$ that of $\xi_{\bc,j_0}$ in $j_0\in S_0$ if it exists, i.e., 
\[
\underline{\xi}_{\bc} = \min_{j_0\in S_0} \underline{\xi}_{\bc,j_0},\quad 
\xi_{\bc} = \min_{j_0\in S_0} \xi_{\bc,j_0}. 
\]
In Section \ref{sec:asymptotics_o}, it will be shown that the value of $\underline{\xi}_{\bc,j_0}$ does not depend on $j_0$.

%
%

\section{Convergence domains} \label{sec:convergence_domain}

%
\subsection{Matrix-valued generating functions of the occupation measure}

Let $\hat\bnu_*(z,w)$ be the matrix-valued generating function of the occupation measure defined as 
\[
\hat\bnu_*(z,w) = \sum_{(x_1,x_2)\in\mathbb{Z}_+^2\setminus\{(0,0)\}} z^{x_1} w^{x_2} \hat\bnu_{(x_1,x_2)}, 
\]
which is decomposed as
\begin{equation}
\hat\bnu_*(z,w) = \hat\bnu_{(*,0)}(z) + \hat\bnu_{(0,*)}(w) + \hat\bnu_{(*,*)}(z,w), 
\label{eq:hatnus_eq}
\end{equation}
where
\begin{align*}
&\hat\bnu_{(*,0)}(z) = \sum_{x_1=1}^\infty z^{x_1} \hat\bnu_{(x_1,0)},\quad 
\hat\bnu_{(0,*)}(w) = \sum_{x_2=1}^\infty w^{x_2} \hat\bnu_{(0,x_2)}, \\
&\hat\bnu_{(*,*)}(z,w) = \sum_{x_1=1}^\infty \sum_{x_2=1}^\infty z^{x_1} w^{x_2} \hat\bnu_{(x_1,x_2)}.
\end{align*}
Let $\calD^{\{1\}}$, $\calD^{\{2\}}$, $\calD^{\{1,2\}}$ and $\calD$ be the convergence domains of the generating functions defined as 
\begin{align*}
&\calD^{\{1\}} = \mbox{the interior of}\ \{(\theta_1,\theta_2)\in\mathbb{R}^2; \hat\bnu_{(*,0)}(e^{\theta_1})<\infty,\ \mbox{entry-wise} \}, \\
&\calD^{\{2\}} = \mbox{the interior of}\ \{(\theta_1,\theta_2)\in\mathbb{R}^2; \hat\bnu_{(0,*)}(e^{\theta_2})<\infty,\ \mbox{entry-wise} \}, \\
&\calD^{\{1,2\}} = \mbox{the interior of}\ \{(\theta_1,\theta_2)\in\mathbb{R}^2; \hat\bnu_{(*,*)}(e^{\theta_1},e^{\theta_2})<\infty,\ \mbox{entry-wise} \}, \\
&\calD = \mbox{the interior of}\ \{(\theta_1,\theta_2)\in\mathbb{R}^2; \hat\bnu_*(e^{\theta_1},e^{\theta_2})<\infty,\ \mbox{entry-wise} \},
\end{align*}
where we use moment-generating-function-type expression so that the defined domains become convex sets. We also define $\calD_0^{\{1\}}$ and $\calD_0^{\{2\}}$ as
\begin{align*}
&\calD_0^{\{1\}} = \mbox{the interior of}\ \{\theta_1\in\mathbb{R}; \hat\bnu_{(*,0)}(e^{\theta_1})<\infty,\ \mbox{entry-wise} \}, \\
&\calD_0^{\{2\}} = \mbox{the interior of}\ \{\theta_2\in\mathbb{R}; \hat\bnu_{(0,*)}(e^{\theta_2})<\infty,\ \mbox{entry-wise} \}.
\end{align*}
By Proposition \ref{pr:xi_equality}, we have $\calD^{\{1,2\}}\subset\calD^{\{1\}}$ and $\calD^{\{1,2\}}\subset\calD^{\{2\}}$. Hence, by \eqref{eq:hatnus_eq}, we have 
\begin{equation}
\calD=\calD^{\{1\}}\cap\calD^{\{2\}}\cap\calD^{\{1,2\}}=\calD^{\{1,2\}}. 
\end{equation}
Furthermore, in a manner similar to that used in the proof of Theorem 5.2 of Ozawa \cite{Ozawa21}, we obtain by Lemma \ref{le:decayrate_upper} the following. 
\begin{proposition} \label{le:domain_upper}
\begin{align}
&\calD_0^{\{1\}} \subset [\Gamma_0^{\{1\}}]^{ex}, \\
&\calD_0^{\{2\}} \subset [\Gamma_0^{\{2\}}]^{ex}, \\
&\calD^{\{1,2\}} \subset [\Gamma^{\{1\}}]^{ex}\cap[\Gamma^{\{2\}}]^{ex}\cap[\Gamma^{\{1,2\}}]^{ex}. 
\end{align}
\end{proposition}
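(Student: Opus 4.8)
The plan is to prove the three inclusions of Proposition~\ref{le:domain_upper} by reducing the generating-function convergence domains of the occupation measure to the corresponding convergence domains of the potential matrices of the induced Markov-additive processes, exactly as in the proof of Theorem~5.2 of Ozawa~\cite{Ozawa21}. The starting observation is that by Lemma~\ref{le:decayrate_upper}, together with Proposition~\ref{pr:xi_equality} (which tells us the asymptotic decay rates do not depend on the base state), we have upper bounds on the one-dimensional and diagonal decay rates of $\hat\bnu$: namely $\bar\xi_{(1,0),j_0}\le\sup[\Gamma^{\{1\}}]_{\{1\}}$, $\bar\xi_{(0,1),j_0}\le\sup[\Gamma^{\{2\}}]_{\{2\}}$, and $\bar\xi_{\bc,j_0}\le\sup\{\langle\bc,\btheta\rangle;\btheta\in\Gamma^{\{1,2\}}\}$ for $\bc\in\mathbb{N}^2$.

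First I would treat $\calD_0^{\{1\}}$. By the Cauchy--Hadamard theorem, $\hat\bnu_{(*,0)}(e^{\theta_1})$ is entry-wise finite whenever $\theta_1<\underline{\xi}_{(1,0),j_0}$ for the relevant $j_0$, and diverges for $\theta_1>\bar\xi_{(1,0),j_0}$; hence $\calD_0^{\{1\}}$ is the interval $(-\infty,\,\underline{\xi}_{(1,0)})$ up to its interior (using that the radius of convergence is common across $j_0$ by Proposition~\ref{pr:xi_equality}). Combining with the bound $\underline{\xi}_{(1,0)}\le\bar\xi_{(1,0)}\le\sup[\Gamma^{\{1\}}]_{\{1\}} = \sup[\Gamma_0^{\{1\}}]$ from \eqref{eq:xic_upper1} gives $\calD_0^{\{1\}}\subset(-\infty,\sup\Gamma_0^{\{1\}}) = [\Gamma_0^{\{1\}}]^{ex}$, since $\Gamma_0^{\{1\}}$ is an interval whose extension is precisely the half-line below its supremum. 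The argument for $\calD_0^{\{2\}}$ is identical with the roles of the coordinates exchanged.

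For the third inclusion, the key point is that $\calD^{\{1,2\}}$ is a convex set (it is defined via a moment-generating-function-type series) and that a convex set in $\mathbb{R}^2$ is determined by its supporting hyperplanes, i.e.\ by $\sup\{\langle\bc,\btheta\rangle;\btheta\in\calD^{\{1,2\}}\}$ over rational (equivalently, $\bc\in\mathbb{N}^2$) directions together with the two axis directions. For each $\bc\in\mathbb{N}^2$, convergence of $\hat\bnu_{(*,*)}(e^{\theta_1},e^{\theta_2})$ at $\btheta$ forces, via the Cauchy--Hadamard theorem applied to the ray $\{n\bc\}$, that $\langle\bc,\btheta\rangle\le\underline{\xi}_{\bc}\le\sup\{\langle\bc,\btheta'\rangle;\btheta'\in\Gamma^{\{1,2\}}\}$ by \eqref{eq:xic_upper}; likewise the behaviour along the coordinate axes of the quadrant $\mathbb{N}^2$ (or more precisely, taking $x_1$ or $x_2$ fixed and letting the other grow, and using \eqref{eq:xic_upper1}--\eqref{eq:xic_upper2}) yields $[\calD^{\{1,2\}}]_{\{1\}}\subset[\Gamma^{\{1\}}]_{\{1\}}^{ex}$-type control and similarly for the second coordinate. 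Assembling these half-plane constraints, $\calD^{\{1,2\}}\subset[\Gamma^{\{1\}}]^{ex}\cap[\Gamma^{\{2\}}]^{ex}\cap[\Gamma^{\{1,2\}}]^{ex}$, because the right-hand side is exactly the intersection of all the half-planes $\{\btheta:\langle\bc,\btheta\rangle\le\sup_{\Gamma}\langle\bc,\cdot\rangle\}$ over these directions (here one uses that $\Gamma^{\{1,2\}}$ is bounded, so its closure is a compact convex set described by its support function).

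The main obstacle I anticipate is the bookkeeping needed to pass cleanly between the one-sided decay rates $\underline{\xi},\bar\xi$ and the convex-set descriptions of the $\calD$'s and $\Gamma$'s — in particular making rigorous that ``finite along every ray $n\bc$ with slope in $\mathbb{N}^2$ plus finite along the axes'' pins down a convex domain in the plane via its support function, and that the extension operator $[\cdot]^{ex}$ interacts with these supporting half-planes the way one expects (this is where Theorem~5.2 of \cite{Ozawa21} does the analogous work). Once that dictionary is in place, the inclusions follow by direct substitution of the bounds from Lemma~\ref{le:decayrate_upper}, so the proof should be short modulo citing the convexity and support-function machinery already developed in \cite{Ozawa21}.
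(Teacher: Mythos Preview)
Your proposal is correct and follows exactly the route the paper indicates: the paper does not give a detailed argument but simply states that the proposition is obtained ``in a manner similar to that used in the proof of Theorem~5.2 of Ozawa~\cite{Ozawa21}'' from Lemma~\ref{le:decayrate_upper}, which is precisely your plan of combining the decay-rate bounds with Cauchy--Hadamard and the support-function/convexity machinery of \cite{Ozawa21}. Your write-up in fact spells out more of the mechanism than the paper itself does.
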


In the following subsections, we obtain relations among the domains.

%
\subsection{Matrix-valued generating functions of the potential matrices}  \label{sec:mgf_potential}

Let $\Phi^{\{1\}}_{\bx,(*,0)}(z)$ and $\Phi^{\{2\}}_{\bx,(0,*)}(w)$ be the matrix-valued generating functions of the potential matrices defined as 
\begin{align*}
&\Phi^{\{1\}}_{\bx,(*,0)}(z) = \sum_{x_1=-\infty}^\infty z^{x_1} \Phi^{\{1\}}_{\bx,(x_1,0)},\quad 
\Phi^{\{2\}}_{\bx,(0,*)}(w) = \sum_{x_2=-\infty}^\infty w^{x_2} \Phi^{\{2\}}_{\bx,(0,x_2)}, 
\end{align*}
where $\bx\in\mathbb{Z}\times\mathbb{Z}_+$ for $\Phi^{\{1\}}_{\bx,(*,0)}(z)$ and $\bx\in\mathbb{Z}_+\times\mathbb{Z}$ for $\Phi^{\{2\}}_{\bx,(0,*)}(w)$.  The matrix-valued generating function $\Phi^{\{1,2\}}_{\bx,(*,*)}(z,w)$ have already been defined in Section \ref{sec:preliminary}. 
Because of space-homogeneity, we have
\begin{equation}
\Phi^{\{1\}}_{(x_1,x_2),(*,0)}(z) = z^{x_1} \Phi^{\{1\}}_{(0,x_2),(*,0)}(z),\quad 
\Phi^{\{2\}}_{(x_1,x_2),(0,*)}(w) = w^{x_2} \Phi^{\{1\}}_{(x_1,0),(0,*)}(w). 
\label{eq:Phi1and2_eq1}
\end{equation}

%
According to Ozawa and Kobayashi \cite{Ozawa18}, we define matrices corresponding to ones called G-matrices in the queueing theory. 
For $i,j\in\{-1,0,1\}$, define the following matrix-valued generating functions:
\begin{align*}
&A^{\{1,2\}}_{*,j}(z) = \sum_{i'\in\{-1,0,1\}} z^{i'} A^{\{1,2\}}_{i',j},\quad 
A^{\{1,2\}}_{i,*}(w) = \sum_{j'\in\{-1,0,1\}} w^{j'} A^{\{1,2\}}_{i,j'}. 
\end{align*}
Recall that the matrix-valued generating function $A^{\{1,2\}}_{*,*}(z,w)$ has already been defined in Section \ref{sec:intro}. 
%
%
%
Let $\chi(z,w)$ be the spectral radius of $A^{\{1,2\}}_{*,*}(z,w)$, i.e., $\chi(z,w)=\spr(A^{\{1,2\}}_{*,*}(z,w))$. Since $\chi(z,w)$ is the reciprocal of the convergence parameter of $A^{\{1,2\}}_{*,*}(z,w)$ when $z$ and $w$ are positive real numbers, we have  
\[
\Gamma^{\{1,2\}}=\{(\theta_1,\theta_2)\in\mathbb{R}^2; \chi(e^{\theta_1},e^{\theta_2})<1\}.
\]
Under Assumption \ref{as:MAprocess_irreducible}, $A^{\{1,2\}}_{*,*}(1,1)$ is irreducible and aperiodic. Furthermore, in a manner similar to that used in the proof of Lemma 2.2 of \cite{Ozawa18}, we see that $\Gamma^{\{1,2\}}$ is bounded. Since $\chi(e^{\theta_1},e^{\theta_2})$ is convex in $(\theta_1, \theta_2)\in\mathbb{R}^2$, the closure of $\Gamma^{\{1,2\}}$ is a convex set. 
For $i\in\{1,2\}$, let $\bar\theta^{\{1,2\}}_i$ and $\underline{\theta}^{\{1,2\}}_i$ be the extreme points of $\Gamma^{\{1,2\}}$ defined as 
\begin{align*}
&\bar\theta^{\{1,2\}}_i = \sup\{\theta_i\in\mathbb{R}; (\theta_1,\theta_2)\in\Gamma^{\{1,2\}}\ \mbox{for some $\theta_{3-i}\in\mathbb{R}$} \}, \\
&\underline{\theta}^{\{1,2\}}_i = \inf\{\theta_i\in\mathbb{R}; (\theta_1,\theta_2)\in\Gamma^{\{1,2\}}\ \mbox{for some $\theta_{3-i}\in\mathbb{R}$} \}. 
\end{align*}
For $\theta_1\in[\underline{\theta}^{\{1,2\}}_1, \bar\theta^{\{1,2\}}_1]$, let $\underline{\eta}_2(\theta_1)$ and $\bar\eta_2(\theta_1)$ be the two real solutions to equation $\chi(e^{\theta_1},e^{\theta_2})=1$, counting multiplicity, where $\underline{\eta}_2(\theta_1)\le\bar\eta_2(\theta_1)$. We analogously define $\underline{\eta}_1(\theta_2)$ and $\bar\eta_1(\theta_2)$.
%
For $n\ge 1$, define the following set of index sequences: 
\begin{align*}
&\scrI_n = \biggl\{\bi_{(n)}\in\{-1,0,1\}^n;\ \sum_{l=1}^k i_l\ge 0\ \mbox{for $k\in\{1,2,...,n-1\}$}\ \mbox{and} \sum_{l=1}^n i_l=-1 \biggr\}, 
\end{align*}
where $\bi_{(n)}=(i_1,i_2,...,i_n)$, and define the following matrix functions: 
\begin{align*}
&D_{1,n}(z) = \sum_{\bi_{(n)}\in\scrI_n} A^{\{1,2\}}_{*,i_1}(z) A^{\{1,2\}}_{*,i_2}(z) \cdots A^{\{1,2\}}_{*,i_n}(z),\\
&D_{2,n}(w) = \sum_{\bi_{(n)}\in\scrI_n} A^{\{1,2\}}_{i_1,*}(w) A^{\{1,2\}}_{i_2,*}(w) \cdots A^{\{1,2\}}_{i_n,*}(w).
\end{align*}
Define matrix functions $G_1(z)$ and $G_2(w)$ as 
\begin{align*}
G_1(z) = \sum_{n=1}^\infty D_{1,n}(z),\quad 
G_2(w) = \sum_{n=1}^\infty D_{2,n}(w). 
\end{align*}
By Lemma 4.1 of \cite{Ozawa18}, these matrix series absolutely converge entry-wise in $z\in\bar{\Delta}_{e^{\underline{\theta}^{\{1,2\}}_1},e^{\bar\theta^{\{1,2\}}_1}}$ and $w\in\bar{\Delta}_{e^{\underline{\theta}^{\{1,2\}}_2},e^{\bar\theta^{\{1,2\}}_2}}$, respectively. We call $G_1(z)$ and $G_2(w)$ the G-matrix functions generated from triplets $\{A^{\{1,2\}}_{*,-1}(z), A^{\{1,2\}}_{*,0}(z), A^{\{1,2\}}_{*,1}(z)\}$ and $\{A^{\{1,2\}}_{-1,*}(w), A^{\{1,2\}}_{0,*}(w), A^{\{1,2\}}_{1,*}(w)\}$, respectively. 
The G-matrix functions $G_1(z)$ and $G_2(w)$ satisfy the following matrix quadratic equations: 
\begin{align}
&A^{\{1,2\}}_{*,-1}(z)+A^{\{1,2\}}_{*,0}(z) G_1(z) +A^{\{1,2\}}_{*,1}(z) G_1(z)^2 = G_1(z), \label{eq:G1function_equation} \\
&A^{\{1,2\}}_{-1,*}(w)+A^{\{1,2\}}_{0,*}(w) G_2(w) +A^{\{1,2\}}_{1,*}(w) G_2(w)^2 = G_2(w). \label{eq:G2function_equation}
\end{align}
From Proposition 2.5 of \cite{Ozawa18}, we see that, for $x\in[e^{\underline{\theta}^{\{1,2\}}_1}, e^{\bar\theta^{\{1,2\}}_1}]$, the Perron-Frobenius eigenvalue of $G_1(x)$ is given by $e^{\underline{\eta}_2(\log x)}$, i.e., $\spr(G_1(x))=e^{\underline{\eta}_2(\log x)}$, and for $y\in[e^{\underline{\theta}^{\{1,2\}}_2}, e^{\bar\theta^{\{1,2\}}_2}]$, that of $G_2(y)$ is given by $e^{\underline{\eta}_1(\log y)}$, i.e., $\spr(G_2(y))=e^{\underline{\eta}_1(\log y)}$. 
%
%
%
By Lemma 4.2 of \cite{Ozawa18}, $G_1(z)$ is entry-wise analytic in the open annular domain $\Delta_{e^{\underline{\theta}^{\{1,2\}}_1},e^{\bar\theta^{\{1,2\}}_1}}$ and $G_2(w)$ is entry-wise analytic in $\Delta_{e^{\underline{\theta}^{\{1,2\}}_2},e^{\bar\theta^{\{1,2\}}_2}}$. 
The convergence domain of $G_1(e^{\theta_1})$ is given by the open interval $[\Gamma^{\{1,2\}}]_{\{1\}}=(\underline{\theta}^{\{1,2\}}_1, \bar\theta^{\{1,2\}}_1)$ and that of $G_2(e^{\theta_2})$ by $[\Gamma^{\{1,2\}}]_{\{2\}}=(\underline{\theta}^{\{1,2\}}_2, \bar\theta^{\{1,2\}}_2)$.

%
For $i,j\in\{0,1\}$, define the following matrix-valued generating functions:
\begin{align*}
&A^{\{1\}}_{*,j}(z) = \sum_{i'\in\{-1,0,1\}} z^{i'} A^{\{1\}}_{i',j},\quad 
A^{\{2\}}_{i,*}(w) = \sum_{j'\in\{-1,0,1\}} w^{j'} A^{\{2\}}_{i,j'}, 
\end{align*}
and consider 1d-QBD-type block matrices $K^{\{1\}}(z)=(K^{\{1\}}_{x_2,x_2'}(z); x_2,x_2'\in\mathbb{Z}_+)$ whose nonzero blocks are given by the twins $\{A^{\{1\}}_{*,0}(z), A^{\{1\}}_{*,1}(z)\}$ and the triplet $\{A^{\{1,2\}}_{*,-1}(z), A^{\{1,2\}}_{*,0}(z), A^{\{1,2\}}_{*,1}(z)\}$ and $K^{\{2\}}(w)=(K^{\{2\}}_{x_1,x_1'}(w); x_1,x_1'\in\mathbb{Z}_+)$ whose nonzero blocks are given by the twins $\{A^{\{2\}}_{0,*}(w), A^{\{2\}}_{1,*}(w)\}$ and the triplet $\{A^{\{1,2\}}_{-1,*}(w), A^{\{1,2\}}_{0,*}(w), A^{\{1,2\}}_{1,*}(w)\}$. 
For the 1d-QBD-type block matrices $K^{\{1\}}(z)$ and $K^{\{2\}}(z)$, the hitting measures from level $k$ to level $0$ are given by $G_1(z)^k$ and $G_2(w)^k$, respectively. Hence, we have 
\begin{align}
\Phi^{\{1\}}_{(0,k),(*,0)}(z) = G_1(z)^k \Phi^{\{1\}}_{(0,0),(*,0)}(z),\quad 
\Phi^{\{2\}}_{(k,0),(0,*)}(w) = G_2(w)^k \Phi^{\{2\}}_{(0,0),(0,*)}(w). 
\end{align}
Hereafter, we denote $\Phi^{\{1\}}_{(0,0),(*,0)}(z)$ and $\Phi^{\{2\}}_{(0,0),(0,*)}(w)$ by $\Phi^{\{1\}}_*(z)$ and $\Phi^{\{2\}}_*(w)$, respectively. By \eqref{eq:Gamma1_eq1}   and \eqref{eq:Gamma2_eq1}, the convergence domain of $\Phi^{\{1\}}_*(e^{\theta_1})$ is given by $\Gamma_0^{\{1\}}$ and that of $\Phi^{\{2\}}_*(e^{\theta_2})$ by $\Gamma_0^{\{2\}}$. 
By \eqref{eq:Phi1and2_eq1}, we have 
\begin{align}
&\Phi^{\{1\}}_{(x_1,x_2),(*,0)}(z) = z^{x_1} G_1(z)^{x_2} \Phi^{\{1\}}_*(z),\ (x_1,x_2)\in\mathbb{Z}\times\mathbb{Z}_+, \label{eq:Phi1_eq1} \\
&\Phi^{\{2\}}_{(x_1,x_2),(0,*)}(w) = w^{x_2} G_2(z)^{x_1} \Phi^{\{2\}}_*(w),\ (x_1,x_2)\in\mathbb{Z}_+\times\mathbb{Z}.  \label{eq:Phi2_eq1} 
\end{align}
In terms of $G_1(z)$ and $G_2(z)$, $\Phi^{\{1\}}_*(z)$ and $\Phi^{\{2\}}_*(w)$ are represented as 
\begin{align}
&\Phi^{\{1\}}_*(z) 
= \sum_{n=0}^\infty \left( A^{\{1\}}_{*,0}(z)+A^{\{1\}}_{*,1}(z)G_1(z) \right)^n 
= \left(I- A^{\{1\}}_{*,0}(z)-A^{\{1\}}_{*,1}(z)G_1(z) \right)^{-1}, \label{eq:Phi1z_series}\\
&\Phi^{\{2\}}_*(w) 
= \sum_{n=0}^\infty \left( A^{\{2\}}_{0,*}(w)+A^{\{2\}}_{1,*}(w)G_2(w) \right)^n
= \left(I- A^{\{2\}}_{0,*}(w)-A^{\{2\}}_{1,*}(w)G_2(w) \right)^{-1}, 
\end{align}
and this implies that
\begin{align}
&\Gamma_0^{\{1\}} = \{\theta_1\in\mathbb{R}; \spr(A^{\{1\}}_{*,0}(e^{\theta_1})+A^{\{1\}}_{*,1}(e^{\theta_1})G_1(e^{\theta_1}))<1\}, \\
&\Gamma_0^{\{2\}} = \{\theta_2\in\mathbb{R}; \spr(A^{\{2\}}_{0,*}(e^{\theta_2})+A^{\{2\}}_{1,*}(e^{\theta_2})G_2(e^{\theta_2}))<1\}.
\end{align}
%

%
\subsection{Convergence domains of the generating functions}

By the definition of $\hat\bnu^{\{1\}}_{(x_1,x_2)}$ and $\hat\bnu^{\{2\}}_{(x_1,x_2)}$, we have
\begin{align*}
\hat\bnu_{(*,0)}(z) = \sum_{x_1=-\infty}^\infty z^{x_1} \hat\bnu^{\{1\}}_{(x_1,0)},\quad 
\hat\bnu_{(0,*)}(w) = \sum_{x_2=-\infty}^\infty w^{x_2} \hat\bnu^{\{2\}}_{(0,x_2)}. 
\end{align*}
Hence, from \eqref{eq:hatnux1}, \eqref{eq:Phi1_eq1} and \eqref{eq:G1function_equation}, we obtain 
\begin{align}
\hat\bnu_{(*,0)}(z)
&= -(\hat\bnu_{(1,0)} A^{\{1\}}_{-1,0}+ \hat\bnu_{(0,1)} A^{\{2\}}_{0,-1} + \hat\bnu_{(1,1)} A^{\{1,2\}}_{-1,-1}) \Phi^{\{1\}}_*(z) \cr
&\quad + \sum_{k=1}^\infty \hat\bnu_{(0,k)} (\hat A^{\{2\}}_{*,*}(z,G_1(z))-G_1(z)) G_1(z)^{k-1} \Phi^{\{1\}}_*(z),
\label{eq:hatnux1s0} 
\end{align}
where
\[
\hat A^{\{2\}}_{*,*}(z,G_1(z)) = A^{\{2\}}_{*,-1}(z)+A^{\{2\}}_{*,0}(z) G_1(z)+A^{\{2\}}_{*,1}(z) G_1(z)^2,\quad 
A^{\{2\}}_{*,j}(z) = \sum_{i\in\{0,1\}} z^i A^{\{2\}}_{i,j},
\]
and from \eqref{eq:hatnux2}, \eqref{eq:Phi2_eq1} and \eqref{eq:G2function_equation}, 
\begin{align}
\hat\bnu_{(0,*)}(w)
&= -( \hat\bnu_{(1,0)} A^{\{1\}}_{-1,0} +\hat\bnu_{(0,1)} A^{\{2\}}_{0,-1}+ \hat\bnu_{(1,1)} A^{\{1,2\}}_{-1,-1} )\Phi^{\{2\}}_*(w) \cr
&\quad + \sum_{k=1}^\infty \hat\bnu_{(k,0)} (\hat A^{\{1\}}_{*,*}(G_2(w),w)-G_2(w)) G_2(w)^{k-1} \Phi^{\{2\}}_*(w),
\label{eq:hatnux2s0} 
\end{align}
where
\[
\hat A^{\{1\}}_{*,*}(G_2(w),w) = A^{\{1\}}_{-1,*}(w)+A^{\{1\}}_{0,*}(w) G_2(w)+A^{\{1\}}_{1,*}(w) G_2(w)^2,\quad 
A^{\{1\}}_{i,*}(w) = \sum_{j\in\{0,1\}} w^j A^{\{1\}}_{i,j}.
\]
By \eqref{eq:hatnux1s0} and \eqref{eq:hatnux2s0}, the convergence domains $\calD^{\{1\}}$ and $\calD^{\{2\}}$  satisfy the following. 
%
\begin{proposition} \label{pr:calD1D2_ineq1}
We have 
\begin{align}
&[\calD^{\{2\}}\cap\Gamma^{\{1,2\}}\cap\Gamma^{\{1\}} ]_{\{1\}}^{ex} \subset \calD_0^{\{1\}}, \label{eq:calD1_ineq1}\\
&[\calD^{\{1\}}\cap\Gamma^{\{1,2\}}\cap\Gamma^{\{2\}} ]_{\{2\}}^{ex} \subset \calD_0^{\{2\}}. \label{eq:calD2_ineq1}
\end{align}
\end{proposition}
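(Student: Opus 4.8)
The plan is to read off the two inclusions directly from the expressions \eqref{eq:hatnux1s0} and \eqref{eq:hatnux2s0} together with the structural facts established in Sections \ref{sec:preliminary} and \ref{sec:convergence_domain}. I will prove \eqref{eq:calD1_ineq1}; the inclusion \eqref{eq:calD2_ineq1} follows by the symmetric argument with the roles of the two coordinates interchanged. Fix a point $\theta_1$ belonging to $[\calD^{\{2\}}\cap\Gamma^{\{1,2\}}\cap\Gamma^{\{1\}}]_{\{1\}}^{ex}$; by the definition of the extension $[\,\cdot\,]^{ex}$ and of the projection $[\,\cdot\,]_{\{1\}}$, there exist $\theta_1'>\theta_1$ and some $\theta_2'\in\mathbb{R}$ with $(\theta_1',\theta_2')\in\calD^{\{2\}}\cap\Gamma^{\{1,2\}}\cap\Gamma^{\{1\}}$. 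The goal is to show $\hat\bnu_{(*,0)}(e^{\theta_1})<\infty$ entry-wise, so that $\theta_1\in\calD_0^{\{1\}}$.

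The main work is to bound, term by term, the right-hand side of \eqref{eq:hatnux1s0} evaluated at $z=e^{\theta_1}$. First I would note that since $\theta_1'\in\Gamma^{\{1,2\}}$ (hence $\theta_1'\in(\underline\theta^{\{1,2\}}_1,\bar\theta^{\{1,2\}}_1)$, which contains $\theta_1$ since $\theta_1<\theta_1'$ and $\Gamma^{\{1,2\}}$ projects onto an interval whose left endpoint is negative under Assumption \ref{as:MAprocess_irreducible}), the G-matrix function $G_1(e^{\theta_1})$ converges entry-wise, and similarly $G_1(e^{\theta_1'})$ converges; monotonicity in the (positive real) argument gives $G_1(e^{\theta_1})\le G_1(e^{\theta_1'})$ entry-wise. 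Likewise, since $\theta_1'\in\Gamma^{\{1\}}$, the matrix $\Phi^{\{1\}}_*(e^{\theta_1'})$ is finite by the representation \eqref{eq:Phi1z_series} and the characterization of $\Gamma_0^{\{1\}}$, and $\Phi^{\{1\}}_*(e^{\theta_1})\le\Phi^{\{1\}}_*(e^{\theta_1'})<\infty$. The finitely many ``boundary'' terms $\hat\bnu_{(1,0)}A^{\{1\}}_{-1,0}$, $\hat\bnu_{(0,1)}A^{\{2\}}_{0,-1}$, $\hat\bnu_{(1,1)}A^{\{1,2\}}_{-1,-1}$ are entry-wise finite by Assumption \ref{as:hatPhi_finite} (they are finite blocks of $\hat\bnu$), so the first line of \eqref{eq:hatnux1s0} at $z=e^{\theta_1}$ is finite. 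For the infinite sum, the dominating factor is $\sum_{k=1}^\infty\hat\bnu_{(0,k)}G_1(e^{\theta_1})^{k-1}$ (the remaining matrix factors are fixed finite matrices, using that $\hat A^{\{2\}}_{*,*}(z,G_1(z))$ is a finite combination of the finite quantities just discussed). Here I would use that $(\theta_1',\theta_2')\in\calD^{\{2\}}$, which by definition means $\hat\bnu_{(0,*)}(e^{\theta_2'})=\sum_{k\ge1}e^{\theta_2'k}\hat\bnu_{(0,k)}<\infty$; since $\spr(G_1(e^{\theta_1'}))=e^{\underline\eta_2(\theta_1')}$ by Proposition 2.5 of \cite{Ozawa18}, and $e^{\theta_2'}>e^{\underline\eta_2(\theta_1')}$ because $(\theta_1',\theta_2')\in\Gamma^{\{1,2\}}$ forces $\theta_2'$ to lie strictly above the lower branch $\underline\eta_2(\theta_1')$ of the curve $\chi=1$, the geometric-type growth of $G_1(e^{\theta_1'})^{k}$ is dominated by $e^{\theta_2'k}$ up to a subexponential factor; combining with $G_1(e^{\theta_1})\le G_1(e^{\theta_1'})$ and absolute summability of $\sum_k e^{\theta_2'k}\hat\bnu_{(0,k)}$ yields entry-wise finiteness of $\sum_k\hat\bnu_{(0,k)}G_1(e^{\theta_1})^{k-1}$.

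I expect the delicate point to be the comparison between $\spr(G_1(e^{\theta_1'}))$ and $e^{\theta_2'}$, i.e.\ verifying that a point of $\Gamma^{\{1,2\}}$ lies strictly between the two branches $\underline\eta_2$ and $\bar\eta_2$ of $\{\chi=1\}$ so that $\theta_2'>\underline\eta_2(\theta_1')$, and then turning the spectral-radius inequality into an honest entry-wise domination of powers (the standard device is that for an irreducible nonnegative matrix $M$ with $\spr(M)=\rho$ and left/right Perron vectors, one has $M^k\le C\,(\rho+\epsilon)^k\,\bone\bone^\top$ entry-wise for any $\epsilon>0$ and suitable $C$; applying this with $\rho+\epsilon<e^{\theta_2'}$ closes the estimate). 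This is where I would mirror the argument in the proof of Theorem 5.2 of Ozawa \cite{Ozawa21} rather than re-derive it. Once the dominating series is controlled, substituting back into \eqref{eq:hatnux1s0} and invoking Fubini (all terms nonnegative) gives $\hat\bnu_{(*,0)}(e^{\theta_1})<\infty$ entry-wise, hence $\theta_1\in\calD_0^{\{1\}}$, which proves \eqref{eq:calD1_ineq1}; \eqref{eq:calD2_ineq1} is obtained identically from \eqref{eq:hatnux2s0}.
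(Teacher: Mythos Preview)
Your overall strategy coincides with the paper's: evaluate the compensation identity \eqref{eq:hatnux1s0} at a real point, control the infinite series $\sum_{k\ge1}\hat\bnu_{(0,k)}(\cdots)G_1(z)^{k-1}$ by comparing the spectral radius $\spr(G_1(e^{s}))=e^{\underline\eta_2(s)}$ with the decay of $\hat\bnu_{(0,k)}$, and use $\Gamma_0^{\{1\}}$ for finiteness of $\Phi^{\{1\}}_*$. The paper states this as ``by the Cauchy--Hadamard theorem, if $\underline\eta_2(s_1)<\underline\xi_{(0,1)}$ the series converges''; your spectral-radius/Perron-vector estimate is exactly the mechanism behind that sentence.

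There is, however, a genuine gap in how you pass from $\theta_1'$ to $\theta_1$. You claim $G_1(e^{\theta_1})\le G_1(e^{\theta_1'})$ and $\Phi^{\{1\}}_*(e^{\theta_1})\le\Phi^{\{1\}}_*(e^{\theta_1'})$ by ``monotonicity in the positive real argument'', but neither function is monotone: $G_1(z)$ is built from the Laurent blocks $A^{\{1,2\}}_{*,j}(z)=z^{-1}A^{\{1,2\}}_{-1,j}+A^{\{1,2\}}_{0,j}+zA^{\{1,2\}}_{1,j}$, and $\Phi^{\{1\}}_*(z)=\sum_{x_1\in\mathbb{Z}}z^{x_1}\Phi^{\{1\}}_{(0,0),(x_1,0)}$ is a two-sided series. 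More seriously, $\theta_1$ may lie to the left of $\underline\theta^{\{1,2\}}_1$ (nothing in Assumption \ref{as:MAprocess_irreducible} forces $\underline\theta^{\{1,2\}}_1<0$, and $\Gamma^{\{1,2\}}$ is bounded while $\theta_1$ can be arbitrarily negative), so $G_1(e^{\theta_1})$ need not even converge; in that case the right-hand side of \eqref{eq:hatnux1s0} is undefined at $z=e^{\theta_1}$.

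The fix is simple and is what the paper does: carry out the whole estimate at $z=e^{\theta_1'}$ (a point of the projection, not of the extension), conclude $\hat\bnu_{(*,0)}(e^{\theta_1'})<\infty$, and then use the \emph{correct} monotonicity, namely that $\hat\bnu_{(*,0)}(z)=\sum_{x_1\ge1}z^{x_1}\hat\bnu_{(x_1,0)}$ is a one-sided power series with nonnegative coefficients, hence nondecreasing in $z>0$, so $\hat\bnu_{(*,0)}(e^{\theta_1})\le\hat\bnu_{(*,0)}(e^{\theta_1'})<\infty$. With this adjustment your argument is complete and matches the paper's.
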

\begin{proof}
For $s_1\in(\underline{\theta}^{\{1,2\}}_1,\bar\theta^{\{1,2\}}_1)=[\Gamma^{\{1,2\}}]_{\{1\}}$, $G_1(e^{s_1})$ is entry-wise finite and we have $\cp(G_1(e^{s_1}))=e^{-\underline{\eta}_2(s_1)}$ (see Proposition 5.1 of \cite{Ozawa18}). Recall that the lower asymptotic decay rate $\underline{\xi}_{(0,1)}$ satisfies 
\[
\underline{\xi}_{(0,1)} = \sup\{\theta_2\in\mathbb{R}; \mbox{$(\theta_1,\theta_2)\in\calD^{\{2\}}$ for some $\theta_1\in\mathbb{R}$} \}.
\]
Set $z=e^{s_1}$ for $s_1\in[\Gamma^{\{1,2\}}]_{\{1\}}$. By the Cauchy-Hadamard theorem, if $\underline{\eta}_2(s_1)<\underline{\xi}_{(0,1)}$, the matrix-valued series in \eqref{eq:hatnux1s0} converges entry-wise. The condition that  $s_1\in[\Gamma^{\{1,2\}}]_{\{1\}}$ and $\underline{\eta}_2(s_1)<\underline{\xi}_{(0,1)} $ is equivalent to one that $s_1\in[\calD^{\{2\}}\cap\Gamma^{\{1,2\}} ]_{\{1\}}$ (see Fig.\ \ref{fig:fig31}). 
Furthermore, if $s_1\in\Gamma_0^{\{1\}}=[\Gamma^{\{1\}}]_{\{1\}}$, $\Phi^{\{1\}}_*(e^{s_1})$ is entry-wise finite. Hence, if $s_1\in[\calD^{\{2\}}\cap\Gamma^{\{1,2\}}\cap\Gamma^{\{1\}} ]_{\{1\}}$, $\hat\bnu_{(*,0)}(e^{s_1})$ is entry-wise finite and we obtain \eqref{eq:calD1_ineq1}. 
Inclusion relation \eqref{eq:calD2_ineq1} can analogously be obtained. 
\end{proof}
%
\begin{figure}[t]
\begin{center}
\includegraphics[width=55mm,trim=0 0 0 0]{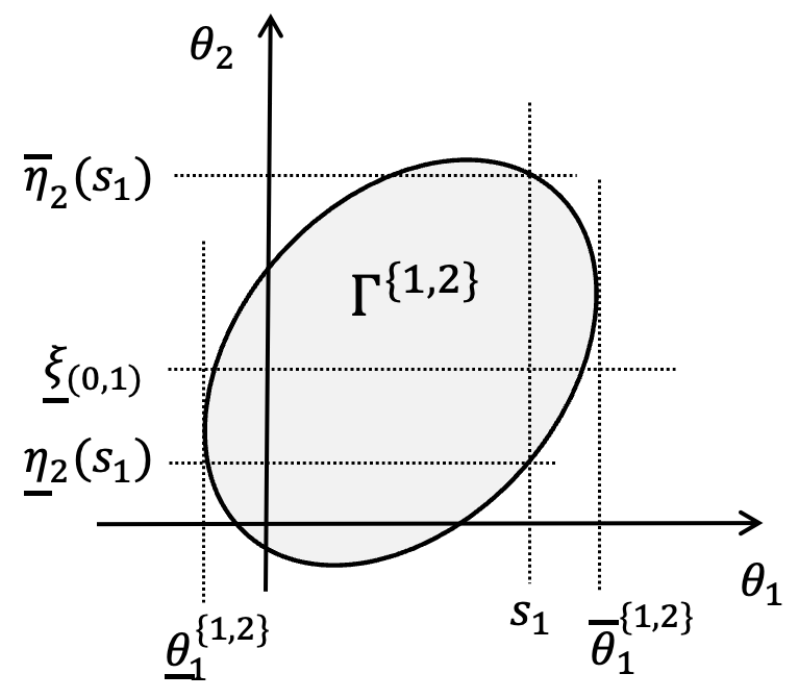} 
\caption{Convergence domain}
\label{fig:fig31}
\end{center}
\end{figure}
%

%
Next, we consider the domain $\calD$. By the definition of $\hat\bnu^{\{1,2\}}_{(x_1,x_2)}$, we have 
\[
\hat\bnu_*(z,w) = \sum_{x_1=-\infty}^\infty \sum_{x_2=-\infty}^\infty z^{x_1} w^{x_2} \hat\bnu^{\{1,2\}}_{(x_1,x_2)}. 
\]
Hence, from \eqref{eq:hatnux12} and \eqref{eq:Phi12_eq1}, we obtain
\begin{align}
\hat\bnu_*(z,w)
&= - (\hat\bnu_{(1,0)} A^{\{1\}}_{-1,0}+ \hat\bnu_{(0,1)} A^{\{2\}}_{0,-1}+ \hat\bnu_{(1,1)} A^{\{2\}}_{-1,-1}) \Phi^{\{1,2\}}_*(z,w) \cr
&\quad + \hat\bnu_{(*,0)}(z) (A^{\{1\}}_{*,*}(z,w)-A^{\{1,2\}}_{*,*}(z,w)) \Phi^{\{1,2\}}_*(z,w) \cr
&\quad + \hat\bnu_{(0,*)}(w) (A^{\{2\}}_{*,*}(z,w)-A^{\{1,2\}}_{*,*}(z,w)) \Phi^{\{1,2\}}_*(z,w), 
\label{eq:hatnux12s}
\end{align}
where, for $\alpha\in\{\{1\},\{2\},\{1,2\}\}$, we set $A^\alpha_{*,*}(z,w)=\sum_{i_1,i_2\in\{-1,0,1\}} z^{i_1} w^{i_2} A^\alpha_{i_1,i_2}$ and denote the matrix-valued generating function $\Phi^{\{1,2\}}_{(0,0),(*,*)}(z,w)$ by $\Phi^{\{1,2\}}_*(z,w)$. Since the convergence domain of $\Phi^{\{1,2\}}_*(e^{\theta_1},e^{\theta_2})$ is given by $\Gamma^{\{1,2\}}$, we immediately obtain the following.
\begin{proposition} \label{pr:calD_ineq1}
We have 
\begin{equation}
[\calD^{\{1\}}\cap\calD^{\{2\}}\cap\Gamma^{\{1,2\}}]^{ex} \subset \calD. \label{eq:calD_ineq1}
\end{equation}
\end{proposition}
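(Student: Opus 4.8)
The plan is to read off the inclusion directly from the representation \eqref{eq:hatnux12s} of $\hat\bnu_*(z,w)$. Fix a point $(\theta_1,\theta_2)\in\calD^{\{1\}}\cap\calD^{\{2\}}\cap\Gamma^{\{1,2\}}$ and set $z=e^{\theta_1}$, $w=e^{\theta_2}$; it suffices to show $\hat\bnu_*(z,w)$ is entry-wise finite at such a point, since then a small open neighborhood lies in $\calD$ and, because $[\calD^{\{1\}}\cap\calD^{\{2\}}\cap\Gamma^{\{1,2\}}]^{ex}$ is the downward-shifted version of an open set, every point of the $[\cdot]^{ex}$-extension is dominated (entry-wise, after substituting exponentials, smaller) by some point of $\calD^{\{1\}}\cap\calD^{\{2\}}\cap\Gamma^{\{1,2\}}$, hence also in $\calD$ by monotonicity of the nonnegative series in $z,w$.

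The main step is therefore to check that each of the three summands on the right-hand side of \eqref{eq:hatnux12s} is entry-wise finite at $(z,w)=(e^{\theta_1},e^{\theta_2})$. For the first summand, $\hat\bnu_{(1,0)}$, $\hat\bnu_{(0,1)}$, $\hat\bnu_{(1,1)}$ are finite by Assumption \ref{as:hatPhi_finite} (they are single blocks of $\hat\bnu$), the matrices $A^{\{1\}}_{-1,0}$, $A^{\{2\}}_{0,-1}$, $A^{\{1,2\}}_{-1,-1}$ are finite constant matrices, and $\Phi^{\{1,2\}}_*(e^{\theta_1},e^{\theta_2})$ is entry-wise finite precisely because its convergence domain is $\Gamma^{\{1,2\}}$ and $(\theta_1,\theta_2)\in\Gamma^{\{1,2\}}$. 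For the second summand, $\hat\bnu_{(*,0)}(e^{\theta_1})$ is entry-wise finite since $(\theta_1,\theta_2)\in\calD^{\{1\}}$ (recall $\calD^{\{1\}}$ is defined through the interior of the set where $\hat\bnu_{(*,0)}(e^{\theta_1})<\infty$, and membership of $(\theta_1,\theta_2)$ in that interior gives finiteness at $(\theta_1,\theta_2)$ itself by taking a slightly larger $\theta_1$ and using monotonicity), $A^{\{1\}}_{*,*}(z,w)-A^{\{1,2\}}_{*,*}(z,w)$ is a finite matrix-valued Laurent polynomial, and again $\Phi^{\{1,2\}}_*(e^{\theta_1},e^{\theta_2})$ is finite; the third summand is handled identically using $(\theta_1,\theta_2)\in\calD^{\{2\}}$. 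Multiplying finite nonnegative blocks by finite nonnegative matrices and adding finitely many terms keeps everything finite, so $\hat\bnu_*(e^{\theta_1},e^{\theta_2})<\infty$ entry-wise.

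The only subtlety, and the point I would state carefully, is the passage from ``$(\theta_1,\theta_2)$ lies in an interior-defined convergence domain'' to ``the relevant generating function is actually finite at $(\theta_1,\theta_2)$'', and then to ``the $[\cdot]^{ex}$-extension is contained in $\calD$''. For the former, if $(\theta_1,\theta_2)$ is in the interior of $\{\hat\bnu_{(*,0)}(e^{\theta_1})<\infty\}$ then there is $\theta_1'>\theta_1$ with $\hat\bnu_{(*,0)}(e^{\theta_1'})<\infty$, and since $\hat\bnu_{(*,0)}(z)=\sum_{x_1\ge1}z^{x_1}\hat\bnu_{(x_1,0)}$ is a nonnegative series in $z$, finiteness at $e^{\theta_1'}$ forces finiteness at $e^{\theta_1}\le e^{\theta_1'}$; the same argument handles $\hat\bnu_{(0,*)}$ and $\Phi^{\{1,2\}}_*$. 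For the latter, by definition $\bx\in[\calD^{\{1\}}\cap\calD^{\{2\}}\cap\Gamma^{\{1,2\}}]^{ex}$ means $\bx<\bx'$ for some $\bx'$ in the intersection; monotonicity of all three nonnegative (Laurent) series in $z,w$ — note the second summand involves $\hat\bnu_{(*,0)}(z)$ with only nonnegative powers of $z$ and $w$ once one also checks that the Laurent-polynomial factors do not spoil positivity, which they do not after the substitution since each block is nonnegative — then propagates finiteness from $\bx'$ down to $\bx$, giving $\bx\in\calD$. I expect this monotonicity bookkeeping, rather than any deep estimate, to be the main thing to get right; everything else is immediate from \eqref{eq:hatnux12s} and the stated convergence domains of $\hat\bnu_{(*,0)}$, $\hat\bnu_{(0,*)}$ and $\Phi^{\{1,2\}}_*$.
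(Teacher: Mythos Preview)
Your approach is the same as the paper's: the paper simply states that the inclusion follows ``immediately'' from \eqref{eq:hatnux12s} together with the fact that the convergence domain of $\Phi^{\{1,2\}}_*(e^{\theta_1},e^{\theta_2})$ is $\Gamma^{\{1,2\}}$, and you have supplied the details of that step.

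One small correction to your final paragraph: you write that ``the Laurent-polynomial factors do not spoil positivity \ldots\ since each block is nonnegative,'' but $A^{\{1\}}_{*,*}(z,w)-A^{\{1,2\}}_{*,*}(z,w)$ is a genuine difference of nonnegative matrices and is not entry-wise nonnegative in general. This does not create a gap, because the $[\cdot]^{ex}$ step is already handled correctly in your first paragraph via the monotonicity of the \emph{left-hand side} $\hat\bnu_*(z,w)=\sum_{\bx}z^{x_1}w^{x_2}\hat\bnu_{\bx}$, a power series with nonnegative coefficients and only nonnegative exponents; there is no need to push monotonicity through the right-hand side of \eqref{eq:hatnux12s}.
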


%
%

\section{Asymptotics of the occupation measure} \label{sec:asymptotics_o}

%
\subsection{Identifying the convergence domains}

Recall that the convergence domains of $\Phi^{\{1\}}_*(e^{\theta_1})$, $\Phi^{\{2\}}_*(e^{\theta_2})$ and $\Phi^{\{1,2\}}_*(e^{\theta_1},e^{\theta_2})$ are given by the regions $\Gamma_0^{\{1\}}$, $\Gamma_0^{\{2\}}$ and $\Gamma^{\{1,2\}}$, respectively. Hence, in order to make it possible that $\Phi^{\{1\}}_*(z)$, $\Phi^{\{2\}}_*(w)$ and $\Phi^{\{1,2\}}_*(z,w)$ are simultaneously finite, we assume the following condition throughout the paper. 
\begin{assumption} \label{as:Gamma1212_notempty}
$\Gamma^{\{1\}}\cap\Gamma^{\{2\}}\cap\Gamma^{\{1,2\}} \ne \emptyset$.
\end{assumption} 

Furthermore, in order to make it possible that the matrix-valued series in \eqref{eq:hatnux1s0} and \eqref{eq:hatnux2s0} as well as $\Phi^{\{1\}}_*(z)$ and $\Phi^{\{2\}}_*(w)$ are simultaneously finite, we assume the following condition throughout the paper. 
\begin{assumption} \label{as:Gamma12D1D2_notempty}
$\Gamma^{\{1\}}\cap\calD^{\{2\}}\cap\Gamma^{\{1,2\}} \ne \emptyset$ and $\calD^{\{1\}}\cap\Gamma^{\{2\}}\cap\Gamma^{\{1,2\}} \ne \emptyset$.
\end{assumption} 

\begin{remark}
The conditions in Assumption \ref{as:Gamma12D1D2_notempty} may be derived from other assumptions. We assume it  to make our discussion simple. 
The former condition ensures that the right hand side of \eqref{eq:hatnux1s0} is finite for some positive point $z$, and the latter one that the right hand side of \eqref{eq:hatnux2s0} is finite for some positive point $w$. 
\end{remark}

For $i\in\{1,2\}$, let $\bar\theta^{\{i\}}_i$ and $\underline{\theta}^{\{i\}}_i$ be the extreme points of $\Gamma_0^{\{i\}}$ defined as
\[
\bar\theta^{\{i\}}_i = \sup_{\theta_i\in\Gamma_0^{\{i\}}} \theta_i,\quad 
\underline{\theta}^{\{i\}}_i = \inf_{\theta_i\in\Gamma_0^{\{i\}}} \theta_i.
\]
By the definition of $\Gamma_0^{\{1\}}$, $\Gamma_0^{\{2\}}$ and $\Gamma^{\{1,2\}}$, we have 
\begin{equation}
\Gamma_0^{\{1\}}\subset[\Gamma^{\{1,2\}}]_{\{1\}},\quad
\Gamma_0^{\{2\}}\subset[\Gamma^{\{1,2\}}]_{\{2\}}, 
\end{equation}
and this leads us to
\begin{equation}
\underline{\theta}^{\{1,2\}}_1\le \underline{\theta}^{\{1\}}_1\le \bar\theta^{\{1\}}_1\le  \bar\theta^{\{1,2\}}_1,\quad 
\underline{\theta}^{\{1,2\}}_2\le \underline{\theta}^{\{2\}}_2\le \bar\theta^{\{2\}}_2\le  \bar\theta^{\{1,2\}}_2.
\end{equation}
We introduced the following optimization problem in Section \ref{sec:intro}.
\begin{align}
\begin{array}{ll}
\mbox{Maximize} & s_1+s_2, \cr
\mbox{subject to} & s_1 \le \sup\!\left\{\theta_1\in\mathbb{R}; (\theta_1,\theta_2)\in \Gamma^{\{1\}}\cap\Lambda^{\{2\}}(s_2)\cap\Gamma^{\{1,2\}}\ \mbox{for some $\theta_2\in\mathbb{R}$} \right\}, \cr
&  s_2 \le \sup\!\left\{\theta_2\in\mathbb{R}; (\theta_1,\theta_2)\in\Lambda^{\{1\}}(s_1)\cap\Gamma^{\{2\}}\cap\Gamma^{\{1,2\}}\ \mbox{for some $\theta_1\in\mathbb{R}$} \right\}.
\end{array}
\end{align}
The restriction of the optimization problem corresponds to
\begin{align}
\Lambda_0(s_1)\subset [\Gamma^{\{1\}}\cap\Lambda^{\{2\}}(s_2)\cap\Gamma^{\{1,2\}}]^{ex}_{\{1\}}, \\
\Lambda_0(s_2)\subset [\Lambda^{\{1\}}(s_1)\cap\Gamma^{\{2\}}\cap\Gamma^{\{1,2\}}]^{ex}_{\{2\}}, 
\end{align}
where for $s\in\mathbb{R}$, $\Lambda_0(s)=(-\infty, s)$. 
Under Assumption \ref{as:Gamma1212_notempty}, this optimization problem is feasible. Since the regions $\Gamma^{\{1\}}$, $\Gamma^{\{2\}}$ and $\Gamma^{\{1,2\}}$ are convex sets, it has a unique optimal solution $(s_1^*,s_2^*)$, which satisfy 
\begin{align}
\Lambda_0(s_1^*)= [\Gamma^{\{1\}}\cap\Lambda^{\{2\}}(s_2^*)\cap\Gamma^{\{1,2\}}]^{ex}_{\{1\}}, \label{eq:s1s_eq1}\\
\Lambda_0(s_2^*)= [\Lambda^{\{1\}}(s_1^*)\cap\Gamma^{\{2\}}\cap\Gamma^{\{1,2\}}]^{ex}_{\{2\}},  \label{eq:s2s_eq1}
\end{align}
and the optimal solution is given as
\begin{align}
s_1^* = \sup\!\left\{\theta_1\in\mathbb{R}; (\theta_1,\theta_2)\in\Gamma^{\{1\}}\cap[\Gamma^{\{2\}}]^{ex}\cap\Gamma^{\{1,2\}}\ \mbox{for some $\theta_2\in\mathbb{R}$} \right\}, \label{eq:s1s_eq2}\\
s_2^* = \sup\!\left\{\theta_2\in\mathbb{R}; (\theta_1,\theta_2)\in[\Gamma^{\{1\}}]^{ex}\cap\Gamma^{\{2\}}\cap\Gamma^{\{1,2\}}\ \mbox{for some $\theta_1\in\mathbb{R}$} \right\}.  \label{eq:s2s_eq2}
\end{align}
%
By Proposition \ref{pr:calD1D2_ineq1}, we obtain the following.  
\begin{proposition} \label{pr:calD1D2_ineq2} 
$\Lambda_0(s_1^*)\subset\calD_0^{\{1\}}$ and $\Lambda_0(s_2^*)\subset\calD_0^{\{2\}}$.
\end{proposition}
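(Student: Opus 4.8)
The plan is to bootstrap from Proposition~\ref{pr:calD1D2_ineq1} by a self-consistency (fixed-point) argument on the pair of projection radii. Introduce the abbreviations
\[
u_1 = \sup\{\theta_1\in\mathbb{R}; \theta_1\in\calD_0^{\{1\}}\},\quad
u_2 = \sup\{\theta_2\in\mathbb{R}; \theta_2\in\calD_0^{\{2\}}\},
\]
so that $\calD_0^{\{1\}}=\Lambda_0(u_1)$ and $\calD_0^{\{2\}}=\Lambda_0(u_2)$ as one-dimensional convex sets (they are intervals unbounded below, by definition of the generating functions and log-convexity). Since $\calD^{\{1,2\}}\subset\calD^{\{1\}}$ and $\calD^{\{1,2\}}\subset\calD^{\{2\}}$, one also has, using $\calD^{\{1\}}=\calD_0^{\{1\}}\times\mathbb{R}$ and $\calD^{\{2\}}=\mathbb{R}\times\calD_0^{\{2\}}$ (these product structures follow from the fact that $\hat\bnu_{(*,0)}$ depends only on $z$ and $\hat\bnu_{(0,*)}$ only on $w$), that $[\calD^{\{2\}}]_{\{1\}}=\mathbb{R}$ is unhelpful; the useful containments come from intersecting with $\Gamma$-regions. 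Plugging $\calD^{\{2\}}=\mathbb{R}\times\Lambda_0(u_2)$ into \eqref{eq:calD1_ineq1} gives
\[
\bigl[(\mathbb{R}\times\Lambda_0(u_2))\cap\Gamma^{\{1,2\}}\cap\Gamma^{\{1\}}\bigr]_{\{1\}}^{ex}\subset\calD_0^{\{1\}},
\]
i.e. $\Lambda_0(u_1)\supset\bigl[\Gamma^{\{1\}}\cap\Lambda^{\{2\}}(u_2)\cap\Gamma^{\{1,2\}}\bigr]_{\{1\}}^{ex}$, and symmetrically $\Lambda_0(u_2)\supset\bigl[\Lambda^{\{1\}}(u_1)\cap\Gamma^{\{2\}}\cap\Gamma^{\{1,2\}}\bigr]_{\{2\}}^{ex}$. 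In other words, the pair $(u_1,u_2)$ is feasible for the optimization problem that defines $(s_1^*,s_2^*)$.

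From feasibility of $(u_1,u_2)$ and the fact that $(s_1^*,s_2^*)$ is the \emph{unique} optimal solution (granted by Assumption~\ref{as:Gamma1212_notempty} together with the convexity of the three $\Gamma$-regions, as recorded in the text preceding \eqref{eq:s1s_eq1}), I want to conclude $u_1\ge s_1^*$ and $u_2\ge s_2^*$, which is exactly $\Lambda_0(s_1^*)\subset\calD_0^{\{1\}}$ and $\Lambda_0(s_2^*)\subset\calD_0^{\{2\}}$. The key monotonicity input is that the feasible region of the optimization problem is a down-set in a suitable sense: the map $(t_1,t_2)\mapsto\bigl(\sup[\Gamma^{\{1\}}\cap\Lambda^{\{2\}}(t_2)\cap\Gamma^{\{1,2\}}]_{\{1\}},\ \sup[\Lambda^{\{1\}}(t_1)\cap\Gamma^{\{2\}}\cap\Gamma^{\{1,2\}}]_{\{2\}}\bigr)$ is coordinatewise nondecreasing, so the set of feasible $(t_1,t_2)$ is closed under taking coordinatewise maxima up to the boundary; since $(s_1^*,s_2^*)$ maximizes $t_1+t_2$ over this set and the set is "staircase-shaped" with a unique northeast corner, any feasible point is dominated coordinatewise by $(s_1^*,s_2^*)$ would be the naive claim — but that is false in general, so instead I argue: if $u_1<s_1^*$, then by \eqref{eq:s1s_eq1} there is slack, and feeding $u_1$ (which is $<s_1^*$) into the second constraint still allows $u_2$ to be at least as large as the value forced at $s_1^*$...

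Let me restructure the last step more carefully, because that is the main obstacle. The cleanest route: show directly that $(s_1^*,s_2^*)$ itself, or rather every point strictly below it, is forced into $\calD_0^{\{1\}}\times\calD_0^{\{2\}}$ by iterating Proposition~\ref{pr:calD1D2_ineq1}. Start from any $s_1<s_1^*$; by \eqref{eq:s1s_eq1}, $s_1\in[\Gamma^{\{1\}}\cap\Lambda^{\{2\}}(s_2^*)\cap\Gamma^{\{1,2\}}]^{ex}_{\{1\}}$, hence there is $s_2<s_2^*$ with $(s_1,s_2)\in[\Gamma^{\{1\}}\cap\Gamma^{\{1,2\}}]^{ex}$; similarly from \eqref{eq:s2s_eq1} pick the constraints the other way. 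One then wants a single point $(s_1,s_2)$, with $s_1$ arbitrarily close to $s_1^*$, lying in $[\calD^{\{2\}}\cap\Gamma^{\{1,2\}}\cap\Gamma^{\{1\}}]^{ex}_{\{1\}}$ — for which it suffices that $s_2\in\calD_0^{\{2\}}$. So the argument is genuinely a simultaneous fixed point: I will set up the monotone operator $F(t_1,t_2)=$ (right-hand sides of the two constraints), note $\calD_0^{\{1\}}\times\calD_0^{\{2\}}$ is a post-fixed point of $F$ by Proposition~\ref{pr:calD1D2_ineq1} and that $(s_1^*,s_2^*)$ is the greatest fixed point on the feasible staircase, invoke Assumption~\ref{as:Gamma12D1D2_notempty} to guarantee the relevant intersections are nonempty so that the operator is well-defined near the optimum, and apply a Knaster–Tarski / monotone-iteration comparison to get $(s_1^*,s_2^*)\le(u_1,u_2)$ coordinatewise. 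The delicate point — the one I expect to spend the most care on — is verifying that the two defining constraints are \emph{tight} at the optimum in the precise form \eqref{eq:s1s_eq1}–\eqref{eq:s2s_eq1}, and that strict inequality $s_i<s_i^*$ propagates through $F$ without losing strictness, which is where convexity and boundedness of $\Gamma^{\{1,2\}}$ (Proposition~B.1 of \cite{Ozawa21}) enter.
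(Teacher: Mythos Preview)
Your overall strategy---bootstrap from Proposition~\ref{pr:calD1D2_ineq1} via a monotone fixed-point argument---is exactly the paper's approach. But there are two issues, one terminological and one substantive.

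First, the inclusion you correctly extract from Proposition~\ref{pr:calD1D2_ineq1}, namely $u_1\ge \sup\bigl[\Gamma^{\{1\}}\cap\Lambda^{\{2\}}(u_2)\cap\Gamma^{\{1,2\}}\bigr]_{\{1\}}$, is the \emph{reverse} of the optimization constraint $s_1\le\sup[\ldots]$. So $(u_1,u_2)$ is \emph{not} feasible for the optimization problem; it is, as you later say correctly, a post-fixed point of the operator $F(t_1,t_2)=(f_1(t_2),f_2(t_1))$ where $f_1,f_2$ are the right-hand sides of the two constraints. Were $(u_1,u_2)$ feasible you would get $u_1+u_2\le s_1^*+s_2^*$, the wrong direction. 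The post-fixed-point property $F(u)\le u$ is what you actually have and actually want.

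Second, and this is the substantive gap: from $F(u)\le u$ and $F(s^*)=s^*$ with $F$ monotone you cannot conclude $s^*\le u$ by Knaster--Tarski or by generic monotone iteration. Iterating $F$ downward from $u$ lands at \emph{some} fixed point $\le u$, but nothing yet forces that fixed point to be $s^*$. What is needed is that $s^*$ is the \emph{only} fixed point in the relevant domain, equivalently that the one-variable composition $f_{21}:=f_1\circ f_2$ satisfies $f_{21}(s)>s$ for every $s<s_1^*$; then $u_1\ge f_1(u_2)\ge f_1(f_2(u_1))=f_{21}(u_1)$ forces $u_1\ge s_1^*$ directly. The paper obtains this strict inequality by observing that $f_{21}$ is concave on $[\Gamma^{\{2\}}\cap\Gamma^{\{1,2\}}]_{\{1\}}$ (composition of concave nondecreasing functions, each concave because the closure of $\Gamma^{\{1,2\}}$ is convex), is increasing up to $s_1^*$, and is identically $s_1^*$ thereafter; together with Assumption~\ref{as:Gamma12D1D2_notempty} placing $u_1$ in that domain, this rules out any fixed point below $s_1^*$. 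You correctly flag this as ``the delicate point'' and name convexity as the tool, but the argument is not carried out---and it is the entire content of the proof, not a technicality. Reducing to the single-variable map $f_{21}$ is what makes the concavity argument clean; staying in two dimensions with $F$ and invoking Knaster--Tarski obscures rather than resolves the uniqueness issue.
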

%
\begin{figure}[t]
\begin{center}
\includegraphics[width=55mm,trim=0 0 0 0]{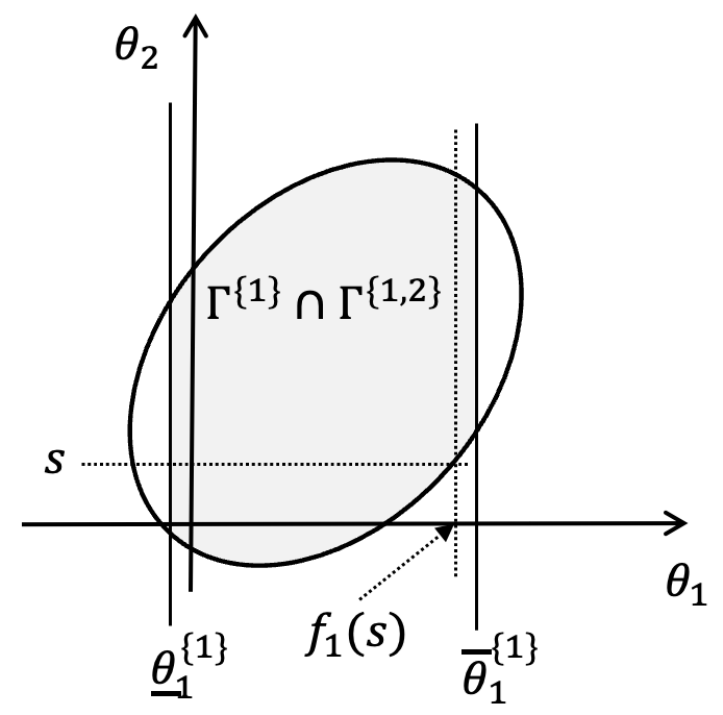} 
\caption{Function $f_1(s)$}
\label{fig:fig41}
\end{center}
\end{figure}
%
\begin{proof}
Define functions $f_1(s)$ and $f_2(s)$ as 
\begin{align*}
&f_1(s)= \sup\left\{\theta_1\in\mathbb{R}; (\theta_1,\theta_2)\in \Gamma^{\{1\}}\cap\Lambda^{\{2\}}(s)\cap\Gamma^{\{1,2\}}\ \mbox{for some $\theta_2\in\mathbb{R}$} \right\}, \\
&f_2(s)= \sup\left\{\theta_2\in\mathbb{R}; (\theta_1,\theta_2)\in\Lambda^{\{1\}}(s)\cap\Gamma^{\{2\}}\cap\Gamma^{\{1,2\}}\ \mbox{for some $\theta_1\in\mathbb{R}$} \right\}, 
\end{align*}
and set $f_{21}(s)=f_1(f_2(s))$. 
For some $s_0$ in the closure of $[\Gamma^{\{1\}}\cap\Gamma^{\{1,2\}}]_{\{2\}}$, $f_1(s)$ is increasing in $s<s_0$ and $f_1(s)=\bar\theta^{\{1\}}_1$ for $s\ge s_0$ (see Fig.\ \ref{fig:fig41}). For some $s_0'$ in the closure of $[\Gamma^{\{2\}}\cap\Gamma^{\{1,2\}}]_{\{1\}}$, $f_2(s)$ is increasing in $s<s_0'$ and $f_2(s)=\bar\theta^{\{2\}}_2$ for $s\ge s_0'$. 
Furthermore, since the closure of $\Gamma^{\{1,2\}}$ is a convex set, $f_1(s)$ is concave in $s\in[\Gamma^{\{1\}}\cap\Gamma^{\{1,2\}}]_{\{2\}}$ (see Fig.\ \ref{fig:fig41}) and $f_2(s)$ is concave in $s\in[\Gamma^{\{2\}}\cap\Gamma^{\{1,2\}}]_{\{1\}}$.
We have $s_1^*=f_1(s_2^*)$, $s_2^*=f_2(s_1^*)$ and $s_1^*=f_{21}(s_1^*)$. Hence, $f_{21}(s)$ is increasing in $s<s_1^*$ and $f_{21}(s)=s_1^*$ for $s\ge s_1^*$. Furthermore, since $f_{21}(s)$ is concave in $s\in[\Gamma^{\{2\}}\cap\Gamma^{\{1,2\}}]_{\{1\}}$, we see that if $s<s_1^*$, then $f_{21}(s)>s$. 
From the proof of Proposition \ref{pr:calD1D2_ineq1}, we also see that if the matrix-valued generating function $\hat\bnu_{(*,0)}(e^{\theta_1})$ converges for $\theta_1<s$, then $\hat\bnu_{(0,*)}(e^{\theta_2})$ converges for $\theta_2<f_2(s)$ and $\hat\bnu_{(*,0)}(e^{\theta_1})$ does for $\theta_1<f_{21}(s)$. 

Under Assumption \ref{as:Gamma12D1D2_notempty}, we have $\underline{\xi}_{(1,0)}\in[\Gamma^{\{2\}}\cap\Gamma^{\{1,2\}}]_{\{1\}}$. 
Suppose $\underline{\xi}_{(1,0)}<s_1^*$. Then, we have $f_{21}(\underline{\xi}_{(1,0)})>\underline{\xi}_{(1,0)}$ and $\hat\bnu_{(*,0)}(e^{\theta_1})$ converges for $\theta_1<f_{21}(\underline{\xi}_{(1,0)})$. This contradicts the definition of $\underline{\xi}_{(1,0)}$. Hence, we must have $\underline{\xi}_{(1,0)}\ge s_1^*$, i.e., $\Lambda_0(s_1^*)\subset\calD_0^{\{1\}}$. 
Another inclusion relation can analogously be obtained. 
\end{proof}

By Proposition \ref{pr:calD1D2_ineq2}, $\hat\bnu_{(*,0)}(z)$ is analytic in $z\in\Delta_{e^{s_1^*}}$ and  $\hat\bnu_{(0,*)}(w)$ is analytic in $w\in\Delta_{e^{s_2^*}}$. Hence, if $z=e^{s_1^*}$ and $w=e^{s_2^*}$ are singular points of $\hat\bnu_{(*,0)}(z)$ and $\hat\bnu_{(0,*)}(w)$, respectively, we obtain the following.
\begin{theorem} \label{pr:calD1D2_eq}
$\calD_0^{\{1\}}=\Lambda_0(s_1^*)$ and $\calD_0^{\{2\}}=\Lambda_0(s_2^*)$.
\end{theorem}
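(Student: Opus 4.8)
The plan is to show that $\calD_0^{\{1\}}\subset\Lambda_0(s_1^*)$ and $\calD_0^{\{2\}}\subset\Lambda_0(s_2^*)$; the reverse inclusions were already established in Proposition \ref{pr:calD1D2_ineq2}. Since $\calD_0^{\{1\}}$ and $\calD_0^{\{2\}}$ are open intervals of the form $(-\infty,\underline{\xi}_{(1,0)})$ and $(-\infty,\underline{\xi}_{(0,1)})$ respectively (by the Cauchy--Hadamard theorem applied to the power series $\hat\bnu_{(*,0)}(z)$ and $\hat\bnu_{(0,*)}(w)$, together with Proposition \ref{pr:xi_equality}), the desired inclusions amount to the inequalities $\underline{\xi}_{(1,0)}\le s_1^*$ and $\underline{\xi}_{(0,1)}\le s_2^*$. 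Combined with Proposition \ref{pr:calD1D2_ineq2} these would force $\underline{\xi}_{(1,0)}=s_1^*$ and $\underline{\xi}_{(0,1)}=s_2^*$, which is exactly $\calD_0^{\{1\}}=\Lambda_0(s_1^*)$ and $\calD_0^{\{2\}}=\Lambda_0(s_2^*)$.

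First I would make precise the hypothesis stated just before the theorem: that $z=e^{s_1^*}$ is a singular point of $\hat\bnu_{(*,0)}(z)$ and $w=e^{s_2^*}$ is a singular point of $\hat\bnu_{(0,*)}(w)$. Given this, since a power series with nonnegative coefficients has its radius of convergence as a singular point (Pringsheim), the singularity at $e^{s_1^*}$ already shows the radius of convergence of $\hat\bnu_{(*,0)}(z)$ is at most $e^{s_1^*}$, i.e. $\underline{\xi}_{(1,0)}\le s_1^*$; likewise $\underline{\xi}_{(0,1)}\le s_2^*$. Together with the inclusions $\Lambda_0(s_1^*)\subset\calD_0^{\{1\}}$ and $\Lambda_0(s_2^*)\subset\calD_0^{\{2\}}$ from Proposition \ref{pr:calD1D2_ineq2}, which give $\underline{\xi}_{(1,0)}\ge s_1^*$ and $\underline{\xi}_{(0,1)}\ge s_2^*$, the equalities follow and hence $\calD_0^{\{1\}}=\Lambda_0(s_1^*)$, $\calD_0^{\{2\}}=\Lambda_0(s_2^*)$.

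The real content — and the main obstacle — is therefore to justify that $e^{s_1^*}$ and $e^{s_2^*}$ are actually singular points, rather than taking it as a hypothesis; I expect the paper to do this. The natural route is via the representation \eqref{eq:hatnux1s0}: on the ray $z=e^{\theta_1}$ with $\theta_1\uparrow s_1^*$, one of the two factors $\Phi^{\{1\}}_*(e^{\theta_1})$ (whose convergence domain is $\Gamma_0^{\{1\}}=[\Gamma^{\{1\}}]_{\{1\}}$ with right endpoint $\bar\theta^{\{1\}}_1$) or the $G_1$-driven geometric series $\sum_k \hat\bnu_{(0,k)}(\hat A^{\{2\}}_{*,*}(z,G_1(z))-G_1(z))G_1(z)^{k-1}$ (which converges iff $\underline{\eta}_2(\theta_1)<\underline{\xi}_{(0,1)}=s_2^*$, by Cauchy--Hadamard and $\spr(G_1(e^{\theta_1}))=e^{\underline{\eta}_2(\theta_1)}$) must blow up, because by the characterization \eqref{eq:s1s_eq1}--\eqref{eq:s1s_eq2} of the optimal solution, $s_1^*$ is precisely the supremum of $\theta_1$ for which both $\theta_1<\bar\theta^{\{1\}}_1$ and $\underline{\eta}_2(\theta_1)<s_2^*$ hold simultaneously, possibly together with $\theta_1<\bar\theta^{\{1,2\}}_1$. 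One must argue that the blow-up of a factor forces a genuine singularity of the product (not a removable one), using nonnegativity/irreducibility of the blocks — for instance that the relevant coefficient sequence of $\hat\bnu_{(*,0)}$ is bounded below by a positive multiple of a sequence whose generating function is singular at $e^{s_1^*}$, or that the matrix $(I-A^{\{1\}}_{*,0}(z)-A^{\{1\}}_{*,1}(z)G_1(z))^{-1}$ has an entry with a pole there. Handling the borderline case where the two constraints become active at the same point (a "corner" of the optimization) is the delicate part; a symmetric argument then disposes of $\calD_0^{\{2\}}=\Lambda_0(s_2^*)$.
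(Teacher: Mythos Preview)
Your proposal is correct and follows essentially the same approach as the paper: use Proposition~\ref{pr:calD1D2_ineq2} for one inclusion, then establish that $e^{s_1^*}$ is a genuine singularity of $\hat\bnu_{(*,0)}$ via the representation~\eqref{eq:hatnux1s0} by determining which constituent ($\Phi^{\{1\}}_*$, the $G_1$-driven series, or $G_1$ itself) becomes singular there. The paper executes this as an explicit four-case analysis (labelled C1--C4) according to which of the constraints $s_1^*=\bar\theta_1^{\{1,2\}}$, $s_1^*=\bar\theta_1^{\{1\}}$, $s_1^*=\bar\theta_1^*$ is binding---identifying the singularity as a branch point or a pole of order one or two---and cites specific lemmas from \cite{Ozawa18} and \cite{Ozawa23} to rule out cancellation (e.g.\ that $\ba(e^{s_1^*})\ne 0$ in case C2), which is exactly the non-removability obstacle you flag.
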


Before proving the theorem, we define real values $\bar\theta_1^*$ and $\bar\theta_2^*$ as 
\begin{align*}
&\bar\theta_1^* = \sup\{\theta_1\in\mathbb{R}; (\theta_1,\theta_2)\in [\Gamma^{\{2\}}]^{ex}\cap\Gamma^{\{1,2\}}\ \mbox{for some $\theta_2\in\mathbb{R}$} \}, \\
&\bar\theta_2^* = \sup\{\theta_2\in\mathbb{R}; (\theta_1,\theta_2)\in [\Gamma^{\{1\}}]^{ex}\cap\Gamma^{\{1,2\}}\ \mbox{for some $\theta_1\in\mathbb{R}$} \}. 
\end{align*}
%
\begin{figure}[t]
\begin{center}
\includegraphics[width=55mm,trim=0 0 0 0]{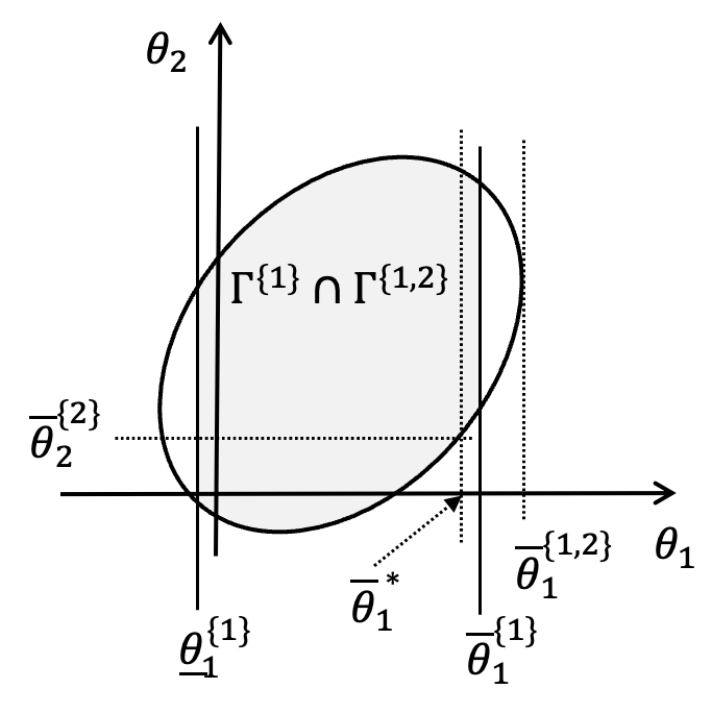} 
\caption{An example of case (C4: $s_1^*=\bar\theta_1^*$, $s_2^*=\bar\theta_2^{\{2\}}$)}
\label{fig:fig42}
\end{center}
\end{figure}
%
\begin{proof}[Proof of Theorem \ref{pr:calD1D2_eq}]
Since this theorem can be proved in a manner similar to that used in Ozawa and Kobayashi \cite{Ozawa18} and Ozawa \cite{Ozawa23}, we give only an outline. 
Denote by $\ba(z)$ the matrix-valued series appearing on the right hand side of \eqref{eq:hatnux1s0}, i.e., 
\begin{equation}
\ba(z) =  \sum_{k=1}^\infty \hat\bnu_{(0,k)} (\hat A^{\{2\}}_{*,*}(z,G_1(z))-G_1(z)) G_1(z)^{k-1}, 
\label{eq:a_def1}
\end{equation}
and recall that $\Phi^{\{1\}}_*(z)$ is represented as a matrix-valued series by \eqref{eq:Phi1z_series}. 
According to \eqref{eq:s1s_eq1}, we consider the following cases (see Fig.\ \ref{fig:fig42}):
\begin{align*}
&\mbox{(C1)}\ s_1^*=\bar\theta_1^{\{1,2\}}, \cr
&\mbox{(C2)}\ s_1^*=\bar\theta_1^{\{1\}}<\bar\theta_1^*, \cr
&\mbox{(C3)}\ s_1^*=\bar\theta_1^{\{1\}}=\bar\theta_1^*<\bar\theta_1^{\{1,2\}}, \cr
&\mbox{(C4)}\ s_1^*=\bar\theta_1^*<\bar\theta_1^{\{1\}},
\end{align*}
where we always have $\bar\theta_1^{\{1\}}\le\bar\theta_1^{\{1,2\}}$ and $\bar\theta_1^*\le\bar\theta_1^{\{1,2\}}$. 

First, we consider the case of C1. The point $z=e^{s_1^*}=e^{\bar\theta_1^{\{1,2\}}}$ is a brunch point of $G_1(z)$ with order one (see Lemma 4.7 of \cite{Ozawa18} and Corollary 3.1 and Lemma 3.3 of \cite{Ozawa23}). 
Hence, by \eqref{eq:a_def1} and \eqref{eq:Phi1z_series}, $z=e^{s_1^*}$ is also a brunch point of $\ba(z)$ and $\Phi^{\{1\}}_*(z)$, and we see by \eqref{eq:hatnux1s0} that it is a brunch point of $\hat\bnu_{(*,0)}(z)$. 

Second, we consider the case of C2. By \eqref{eq:Phi1z_series}, we see that $z=e^{s_1^*}=e^{\bar\theta_1^{\{1\}}}$ is a pole of $\Phi^{\{1\}}_*(z)$ with order one (see Propositions 5.3 and 5.4 of \cite{Ozawa18}). 
On the other hand, since $s_1^*<\bar\theta_1^*$, $\ba(z)$ converges at $z=e^{s_1^*}$ and it is analytic at the same point (see Proposition 5.1 of \cite{Ozawa18}). 
Furthermore, we see that $\ba(e^{s_1^*})$ is nonzero (see Lemma 5.3 of \cite{Ozawa18}). Hence, the point $z=e^{s_1^*}$ is a pole of $\hat\bnu_{(*,0)}(z)$ with order one. 

Third, we consider the case of C3. In this case, the point $z=e^{s_1^*}=e^{\bar\theta_1^{\{1\}}}=e^{\bar\theta_1^*}$ is also a pole of $\Phi^{\{1\}}_*(z)$ with order one (see discussion above). 
Since we have $s_2^*=\bar\theta^{\{2\}}_2=\underline{\eta}_2(s_1^*)<\bar\theta_2^*$, the point $w=e^{\underline{\eta}_2(s_1^*)}$ is a pole of $\hat\bnu_{(0,*)}(w)$, where $\cp(G(e^{s_1^*}))=e^{-\underline{\eta}_2(s_1^*)}$ (see the proof of Lemma 5.5 (2) in \cite{Ozawa18}). 
Hence, by \eqref{eq:a_def1}, we see that $z=e^{s_1^*}$ is a pole of $\ba(z)$ with order one, and it is a pole of $\hat\bnu_{(*,0)}(z)$ with order two (see Lemma 5.5 (2) of \cite{Ozawa18}). 

Finally, we consider the case of C4. Since $s_1^*<\bar\theta_1^{\{1\}}$, $\Phi^{\{1\}}_*(z)$ converges at $z=e^{s_1^*}$. 
Because of the same reason as C3, the point $z=e^{s_1^*}$ is a pole of $\ba(z)$ with order one, and it is a pole of $\hat\bnu_{(*,0)}(z)$ with order one. 

As a result, we see that the point $z=e^{s_1^*}$ is a singular point of $\hat\bnu_{(*,0)}(z)$ in all the cases. 
Analogously, we can see that the point $w=e^{s_2^*}$ is always a singular point of $\hat\bnu_{(0,*)}(w)$. 
\end{proof}

Let $\Gamma$ be a region defined as 
\[
\Gamma = [\Gamma^{\{1\}}]^{ex} \cap [\Gamma^{\{2\}}]^{ex} \cap \Gamma^{\{1,2\}}.
\]
Under Assumption \ref{as:Gamma1212_notempty}, the convergence domains $\calD_0^{\{1\}}$ and $\calD_0^{\{2\}}$ are represented as
\begin{equation}
\calD_0^{\{1\}}=(-\infty,s_1^*)=[\Gamma]_{\{1\}}^{ex},\quad 
\calD_0^{\{2\}}=(-\infty,s_2^*)=[\Gamma]_{\{2\}}^{ex}.
\label{eq:calD10D20_eq1}
\end{equation}
The convergence domain $\calD$ is given as follows.
\begin{theorem} \label{pr:calD_eq}
We have 
\begin{equation}
\calD=[\calD^{\{1\}}\cap\calD^{\{2\}}\cap\Gamma^{\{1,2\}}]^{ex}=[\Gamma]^{ex}.
\label{eq:calD_eq1}
\end{equation}
\end{theorem}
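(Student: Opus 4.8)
I would reduce the two displayed equalities to the single identity $\calD=[\Gamma]^{ex}$ and then establish its two inclusions.

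\emph{Reduction.} The convergence condition defining $\calD^{\{1\}}$ involves only $\theta_1$, so by Theorem \ref{pr:calD1D2_eq} one has $\calD^{\{1\}}=\calD_0^{\{1\}}\times\mathbb{R}=\Lambda^{\{1\}}(s_1^*)$, and likewise $\calD^{\{2\}}=\Lambda^{\{2\}}(s_2^*)$; hence $\calD^{\{1\}}\cap\calD^{\{2\}}\cap\Gamma^{\{1,2\}}=\Lambda^{\{1\}}(s_1^*)\cap\Lambda^{\{2\}}(s_2^*)\cap\Gamma^{\{1,2\}}$. Since $\Gamma^{\{1\}}$ depends only on $\theta_1$ and $\Gamma^{\{2\}}$ only on $\theta_2$, one has $[\Gamma^{\{1\}}]^{ex}=\Lambda^{\{1\}}(\bar\theta_1^{\{1\}})$ and $[\Gamma^{\{2\}}]^{ex}=\Lambda^{\{2\}}(\bar\theta_2^{\{2\}})$, so $\Gamma=\Lambda^{\{1\}}(\bar\theta_1^{\{1\}})\cap\Lambda^{\{2\}}(\bar\theta_2^{\{2\}})\cap\Gamma^{\{1,2\}}$. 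As $s_i^*\le\bar\theta_i^{\{i\}}$, this set contains $\Lambda^{\{1\}}(s_1^*)\cap\Lambda^{\{2\}}(s_2^*)\cap\Gamma^{\{1,2\}}$; conversely $\Gamma$ is open and, by \eqref{eq:calD10D20_eq1}, $\sup[\Gamma]_{\{i\}}=s_i^*$, so $\Gamma\subset\Lambda^{\{1\}}(s_1^*)\cap\Lambda^{\{2\}}(s_2^*)$, hence $\Gamma\subset\Lambda^{\{1\}}(s_1^*)\cap\Lambda^{\{2\}}(s_2^*)\cap\Gamma^{\{1,2\}}$. Thus $\calD^{\{1\}}\cap\calD^{\{2\}}\cap\Gamma^{\{1,2\}}=\Gamma$, and both claimed equalities follow once $\calD=[\Gamma]^{ex}$ is shown.

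\emph{The inclusion $[\Gamma]^{ex}\subset\calD$.} This is exactly Proposition \ref{pr:calD_ineq1} combined with the identity just obtained. It can also be read off from \eqref{eq:hatnux12s}, whose right-hand side is entry-wise finite on $\Gamma$, since there $\hat\bnu_{(*,0)}(e^{\theta_1})$, $\hat\bnu_{(0,*)}(e^{\theta_2})$, the matrix Laurent polynomials $A^\alpha_{*,*}(e^{\theta_1},e^{\theta_2})$ and $\Phi^{\{1,2\}}_*(e^{\theta_1},e^{\theta_2})$ are all finite; as $\hat\bnu_*(z,w)$ is a nonnegative power series, its convergence region is downward closed in $(\theta_1,\theta_2)$, so it contains $[\Gamma]^{ex}$.

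\emph{The inclusion $\calD\subset[\Gamma]^{ex}$.} From $\calD=\calD^{\{1,2\}}\subset\calD^{\{1\}}\cap\calD^{\{2\}}=\Lambda^{\{1\}}(s_1^*)\cap\Lambda^{\{2\}}(s_2^*)$ and $\calD=\calD^{\{1,2\}}\subset[\Gamma^{\{1,2\}}]^{ex}$ (Proposition \ref{le:domain_upper}) we obtain $\calD\subset\Lambda^{\{1\}}(s_1^*)\cap\Lambda^{\{2\}}(s_2^*)\cap[\Gamma^{\{1,2\}}]^{ex}$, so it remains to prove the purely geometric inclusion $\Lambda^{\{1\}}(s_1^*)\cap\Lambda^{\{2\}}(s_2^*)\cap[\Gamma^{\{1,2\}}]^{ex}\subset[\Gamma]^{ex}$. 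Let $\btheta^0$ lie in the left-hand side; then $\theta_1^0<s_1^*$, $\theta_2^0<s_2^*$, and there is $\btheta''\in\Gamma^{\{1,2\}}$ with $\btheta''>\btheta^0$. Suppose $\btheta^0\notin[\Gamma]^{ex}$; since $\Gamma=\Gamma^{\{1,2\}}\cap\Lambda^{\{1\}}(s_1^*)\cap\Lambda^{\{2\}}(s_2^*)$, the open rectangle $R=(\theta_1^0,s_1^*)\times(\theta_2^0,s_2^*)$ is then disjoint from the convex open set $\Gamma^{\{1,2\}}$, so there is a separating line $\{\btheta:\langle\bv,\btheta\rangle=c\}$ with $\langle\bv,\btheta\rangle\le c$ on $\overline{\Gamma^{\{1,2\}}}$ and $\langle\bv,\btheta\rangle\ge c$ on $\overline R$. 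As $\btheta^0$ is the componentwise-least corner of $\overline R$ and $\btheta''\in\Gamma^{\{1,2\}}$, we have $\langle\bv,\btheta''\rangle\le c\le\langle\bv,\btheta^0\rangle$, i.e. $\langle\bv,\btheta''-\btheta^0\rangle\le 0$ with $\btheta''-\btheta^0>\bzero$, forcing $v_1\le 0$ or $v_2\le 0$. By \eqref{eq:calD10D20_eq1}, $\Gamma^{\{1,2\}}$ contains a sequence $\bp^{(k)}$ with $p^{(k)}_1\to s_1^*$ and a sequence $\bq^{(k)}$ with $q^{(k)}_2\to s_2^*$, both lying in $\Gamma$ (hence with both coordinates below $(s_1^*,s_2^*)$); evaluating $\langle\bv,\cdot\rangle\le c$ along these sequences in the limit and comparing with $\min_{\overline R}\langle\bv,\cdot\rangle\ge c$ yields, for each sign pattern of $\bv$, a contradiction: e.g. if $v_1<0<v_2$ one forces $s_2^*\le\theta_2^0$; if $v_1,v_2<0$ a subsequence of $\bp^{(k)}$ converges to $(s_1^*,s_2^*)$, so $\bp^{(k)}\in\Gamma$ lies eventually above $\btheta^0$, giving $\btheta^0\in[\Gamma]^{ex}$ outright; the degenerate cases $v_1=0$ or $v_2=0$ are immediate. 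Hence $\btheta^0\in[\Gamma]^{ex}$, which completes the proof.

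\emph{Main difficulty.} The substantive step is the geometric inclusion of the last paragraph, i.e. the separating-hyperplane case analysis, together with the bookkeeping identifications $[\Gamma^{\{i\}}]^{ex}=\Lambda^{\{i\}}(\bar\theta_i^{\{i\}})$ and $\calD^{\{i\}}=\Lambda^{\{i\}}(s_i^*)$. I would stress that this route needs no further singularity analysis of $\hat\bnu_*(z,w)$: all the analytic input is already packaged in Theorem \ref{pr:calD1D2_eq} and Proposition \ref{le:domain_upper}, and the convexity of $\Gamma^{\{1,2\}}$ together with the extremality characterization of $(s_1^*,s_2^*)$ does the rest.
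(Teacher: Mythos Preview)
Your proof has the same skeleton as the paper's: both obtain $\calD\subset\calD^{\{1\}}\cap\calD^{\{2\}}\cap[\Gamma^{\{1,2\}}]^{ex}$ from Proposition~\ref{le:domain_upper}, get the reverse inclusion $[\calD^{\{1\}}\cap\calD^{\{2\}}\cap\Gamma^{\{1,2\}}]^{ex}\subset\calD$ from Proposition~\ref{pr:calD_ineq1}, and then close the loop via the purely geometric identity $\calD^{\{1\}}\cap\calD^{\{2\}}\cap[\Gamma^{\{1,2\}}]^{ex}=[\calD^{\{1\}}\cap\calD^{\{2\}}\cap\Gamma^{\{1,2\}}]^{ex}$. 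Where you diverge is in this last step. The paper argues in two lines: if some $(s_1,s_2)$ with $s_i<s_i^*$ lay in the left side but not the right, the rectangle $[s_1,s_1^*)\times[s_2,s_2^*)$ would miss $\Gamma^{\{1,2\}}$, and the fixed-point identities \eqref{eq:s1s_eq1}--\eqref{eq:s2s_eq1} are then invoked to force $s_1\ge s_1^*$ or $s_2\ge s_2^*$. You instead separate the rectangle from the convex set $\Gamma^{\{1,2\}}$ by a line and run a sign analysis on its normal $\bv$, using sequences $\bp^{(k)},\bq^{(k)}\in\Gamma$ approaching the extremal coordinates. Your route is longer but makes the role of convexity of $\Gamma^{\{1,2\}}$ fully explicit; the paper's is terser and leans on the optimality characterisation of $(s_1^*,s_2^*)$. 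Your preliminary reduction $\calD^{\{1\}}\cap\calD^{\{2\}}\cap\Gamma^{\{1,2\}}=\Gamma$ is correct and makes the second equality in \eqref{eq:calD_eq1} transparent rather than merely ``obvious''.

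One slip to repair: in the case $v_1,v_2<0$ you assert that a subsequence of $\bp^{(k)}$ converges to $(s_1^*,s_2^*)$, but nothing forces $p_2^{(k)}\to s_2^*$. The case is in fact immediate without this claim: for \emph{any} $\bp\in\Gamma$ one has $p_i<s_i^*$, hence $\langle\bv,\bp-(s_1^*,s_2^*)\rangle=v_1(p_1-s_1^*)+v_2(p_2-s_2^*)>0$; but $(s_1^*,s_2^*)$ is the minimiser of $\langle\bv,\cdot\rangle$ on $\overline R$ when $v_1,v_2<0$, so $\langle\bv,\bp\rangle\le c\le\langle\bv,(s_1^*,s_2^*)\rangle$, already a contradiction.
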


\begin{proof}
From Proposition \ref{le:domain_upper}, we obtain $\calD \subset [\Gamma^{\{1,2\}}]^{ex}$. Obviously, we have $\calD\subset\calD^{\{1\}}\cap\calD^{\{2\}}$. Hence, we obtain 
\[
\calD \subset \calD^{\{1\}}\cap\calD^{\{2\}}\cap[\Gamma^{\{1,2\}}]^{ex}.
\]
Suppose there exists a point $(s_1,s_2)\in\mathbb{R}^2$ such that 
\[
(s_1,s_2) \in (\calD^{\{1\}}\cap\calD^{\{2\}}\cap[\Gamma^{\{1,2\}}]^{ex})\setminus [\calD^{\{1\}}\cap\calD^{\{2\}}\cap\Gamma^{\{1,2\}}]^{ex}, 
\]
where $s_1<s_1^*$ and $s_2<s_2^*$. Then, for every $(s_1',s_2')\in\mathbb{R}^2$ such that $s_1\le s_1'<s_1^*$ and $s_2\le s_2'<s_2^*$, we have $(s_1',s_2')\notin\Gamma^{\{1,2\}}$. By \eqref{eq:s1s_eq1}, if $s_2<s_2^*$, this implies $s_1^*\le s_1$; By \eqref{eq:s2s_eq1}, if $s_1<s_1^*$, it implies $s_2^*\le s_2$. 
Hence, we get a contradiction. As a result, we obtain 
\[
\calD^{\{1\}}\cap\calD^{\{2\}}\cap[\Gamma^{\{1,2\}}]^{ex} = [\calD^{\{1\}}\cap\calD^{\{2\}}\cap\Gamma^{\{1,2\}}]^{ex}, 
\]
and, from \eqref{eq:calD_ineq1},  the first equality of \eqref{eq:calD_eq1} is derived.
By \eqref{eq:calD10D20_eq1}, the second equality of \eqref{eq:calD_eq1} is obvious.
\end{proof}

%
\subsection{Asymptotic decay rates and functions}

With respect to the asymptotic decay rates and functions of the occupation measure, we can also derive the same results as those obtained in Ozawa and Kobayashi \cite{Ozawa18} and Ozawa \cite{Ozawa22,Ozawa23}. In this subsection, we summarize them. 

We note that the singular points of the matrix-valued generating functions $\hat\bnu_{(*,0)}(z)$ and $\hat\bnu_{(0,*)}(w)$ that determine the asymptotic nature of the occupation measure are poles and branch points with a finite order. Furthermore, such a singular point for $\hat\bnu_{(*,0)}(z)$ is common to all the entries of $\hat\bnu_{(*,0)}(z)$. Hence, the upper asymptotic decay rates in directions $(1,0)$ and $(0,1)$ are identical to the corresponding lower asymptotic decay rates and they do not depend on the phase state, i.e., for every $j_0\in S_0$, $\bar\xi_{(1,0),j_0}=\underline{\xi}_{(1,0),j_0}=\xi_{(1,0),j_0}=\xi_{(1,0)}$ and $\bar\xi_{(0,1),j_0}=\underline{\xi}_{(0,1),j_0}=\xi_{(0,1),j_0}=\xi_{(0,1)}$. 
For other directions, the same result also holds. 
From Theorem \ref{pr:calD1D2_eq}, the asymptotic decay rates $\xi_{(1,0)}$ and $\xi_{(0,1)}$ is given as follows. 
\begin{theorem} \label{th:xi10xi01_eq}
We have $\xi_{(1,0)}=s_1^*$ and $\xi_{(0,1)}=s_2^*$. 
\end{theorem}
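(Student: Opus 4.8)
The plan is to translate the statement about the asymptotic decay rates $\xi_{(1,0)}$ and $\xi_{(0,1)}$ directly into statements about the convergence domains $\calD_0^{\{1\}}$ and $\calD_0^{\{2\}}$, which have already been identified in Theorem \ref{pr:calD1D2_eq}. First I would recall that, by the definitions in Section \ref{sec:decay_rate} and the Cauchy--Hadamard theorem, the radius of convergence of the power series $\hat\bnu_{(*,0)}(z)=\sum_{x_1\ge 1}z^{x_1}\hat\bnu_{(x_1,0)}$ is exactly $e^{\underline{\xi}_{(1,0)}}$, where $\underline{\xi}_{(1,0)}=\min_{j_0}\underline{\xi}_{(1,0),j_0}$; equivalently, $\calD_0^{\{1\}}=(-\infty,\underline{\xi}_{(1,0)})=\Lambda_0(\underline{\xi}_{(1,0)})$. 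Similarly $\calD_0^{\{2\}}=\Lambda_0(\underline{\xi}_{(0,1)})$.

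The second step is to invoke Theorem \ref{pr:calD1D2_eq}, which asserts $\calD_0^{\{1\}}=\Lambda_0(s_1^*)$ and $\calD_0^{\{2\}}=\Lambda_0(s_2^*)$. Comparing the two representations of each convergence domain as a half-line $(-\infty,\,\cdot\,)$ forces $\underline{\xi}_{(1,0)}=s_1^*$ and $\underline{\xi}_{(0,1)}=s_2^*$. To upgrade the lower decay rates to genuine decay rates $\xi_{(1,0)}$ and $\xi_{(0,1)}$, I would use the discussion opening Section \ref{sec:asymptotics_o} (the paragraph preceding the theorem): since the singular point $z=e^{s_1^*}$ of $\hat\bnu_{(*,0)}(z)$ established in the proof of Theorem \ref{pr:calD1D2_eq} is a pole or algebraic branch point of finite order and is common to all entries, the corresponding upper and lower asymptotic decay rates coincide and are independent of the phase $j_0$, i.e. $\bar\xi_{(1,0),j_0}=\underline{\xi}_{(1,0),j_0}=\xi_{(1,0)}=s_1^*$ for every $j_0\in S_0$, and likewise for direction $(0,1)$. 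This combined with Proposition \ref{pr:xi_equality} (independence of the base point $\bx$) yields the claimed equalities.

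Most of the substantive work here is already done: the characterization of $s_1^*,s_2^*$ as the convergence abscissas of $\hat\bnu_{(*,0)},\hat\bnu_{(0,*)}$ is precisely Theorem \ref{pr:calD1D2_eq}, so the present theorem is essentially a restatement in decay-rate language. The only point requiring a little care — and the one I expect to be the main obstacle — is justifying that the singular point on the boundary of convergence is of the right type (pole or finite-order branch point) so that $\limsup$ equals $\liminf$ in the definition of the decay rate; but this is exactly what the case analysis (C1)--(C4) in the proof of Theorem \ref{pr:calD1D2_eq} delivers, together with the finite-order branch-point results for $G_1(z)$ and the pole-order results for $\Phi^{\{1\}}_*(z)$ cited from Ozawa and Kobayashi \cite{Ozawa18} and Ozawa \cite{Ozawa23}. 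Hence I would keep the proof short, simply citing Theorem \ref{pr:calD1D2_eq} and the finite-order singularity observations and reading off the equalities.
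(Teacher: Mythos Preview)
Your proposal is correct and matches the paper's approach exactly: the paper derives Theorem \ref{th:xi10xi01_eq} directly from Theorem \ref{pr:calD1D2_eq} together with the paragraph immediately preceding it, which notes that the boundary singularities of $\hat\bnu_{(*,0)}(z)$ and $\hat\bnu_{(0,*)}(w)$ are poles or finite-order branch points common to all entries, so that $\underline{\xi}=\bar\xi$ and the value is phase-independent. Your write-up simply makes explicit the Cauchy--Hadamard link between $\calD_0^{\{i\}}$ and $\underline{\xi}_{(1,0)},\underline{\xi}_{(0,1)}$ that the paper leaves implicit.
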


In a manner similar to that used in \cite{Ozawa22,Ozawa23}, the asymptotic decay rates $\xi_{\bc}$ is given as follows (see Section 4.1 of \cite{Ozawa22} and Theorem 2.2 of \cite{Ozawa23}). 
\begin{theorem} \label{th:xic_eq}
For $\bc\in\mathbb{N}^2$, we have 
\begin{align}
\xi_{\bc} 
&= \min\!\left\{ \sup\{\langle \bc,\btheta \rangle; \btheta\in \calD^{\{1\}}\cap\Gamma^{\{1,2\}}\},\ \sup\{\langle \bc,\btheta \rangle; \btheta\in \calD^{\{2\}}\cap\Gamma^{\{1,2\}}\} \right\},
\end{align}
where $[\calD^{\{1\}}]_{\{1\}}=\calD_0^{\{1\}}=[\Gamma]^{ex}_{\{1\}}$ and $[\calD^{\{2\}}]_{\{2\}}=\calD_0^{\{2\}}=[\Gamma]^{ex}_{\{2\}}$.
\end{theorem}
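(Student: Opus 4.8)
\textbf{Proof proposal for Theorem \ref{th:xic_eq}.}

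The plan is to identify the asymptotic decay rate $\xi_{\bc}$ in a direction $\bc=(c_1,c_2)\in\mathbb{N}^2$ with the reciprocal of the radius of convergence of the one-dimensional generating function $f_{\bc}(z) = \sum_{n\ge 1} z^n \hat\bnu_{n\bc}$ (or, equivalently, the slice of $\hat\bnu_*(z,w)$ along the ray $(z,w)=(r^{c_1},r^{c_2})$), and then to locate that radius of convergence by combining the explicit expression \eqref{eq:hatnux12s} for $\hat\bnu_*(z,w)$ with the already-identified convergence domains $\calD_0^{\{1\}}=\Lambda_0(s_1^*)$, $\calD_0^{\{2\}}=\Lambda_0(s_2^*)$ (Theorem \ref{pr:calD1D2_eq}) and $\calD=[\Gamma]^{ex}$ (Theorem \ref{pr:calD_eq}). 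Concretely, $-\xi_{\bc}=\sup\{\langle\bc,\btheta\rangle;\btheta\in\calD\}$ would follow immediately from Theorem \ref{pr:calD_eq} and Proposition \ref{pr:xi_equality} \emph{if} the supporting hyperplane of $\calD$ in direction $\bc$ touched $\calD$ only at an interior-type boundary point of $\Gamma^{\{1,2\}}$; the content of the theorem is that, because $\calD=[\Gamma]^{ex}$ with $\Gamma=[\Gamma^{\{1\}}]^{ex}\cap[\Gamma^{\{2\}}]^{ex}\cap\Gamma^{\{1,2\}}$, the maximizing direction splits into the two pieces $\calD^{\{1\}}\cap\Gamma^{\{1,2\}}$ and $\calD^{\{2\}}\cap\Gamma^{\{1,2\}}$, and the minimum of the two suprema is the correct value.

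The key steps, in order, would be: (i) Establish the upper bound $\xi_{\bc}\ge \min\{\sup\langle\bc,\btheta\rangle_{\calD^{\{1\}}\cap\Gamma^{\{1,2\}}},\ \sup\langle\bc,\btheta\rangle_{\calD^{\{2\}}\cap\Gamma^{\{1,2\}}}\}$ by showing $\hat\bnu_*(e^{\theta_1},e^{\theta_2})$ is entry-wise finite on the interior of $\calD^{\{1\}}\cap\calD^{\{2\}}\cap\Gamma^{\{1,2\}}$; this is exactly Proposition \ref{pr:calD_ineq1} together with $\calD=\calD^{\{1,2\}}\subset\calD^{\{1\}}\cap\calD^{\{2\}}$, and one then takes the slice along the ray determined by $\bc$ and applies the Cauchy--Hadamard theorem, noting $\sup\langle\bc,\btheta\rangle$ over $\calD^{\{1\}}\cap\calD^{\{2\}}\cap\Gamma^{\{1,2\}}$ equals the stated minimum because $[\calD^{\{1\}}]_{\{1\}}=\calD_0^{\{1\}}=[\Gamma]^{ex}_{\{1\}}$ and $[\calD^{\{2\}}]_{\{2\}}=\calD_0^{\{2\}}=[\Gamma]^{ex}_{\{2\}}$ force the constraint in direction $\bc$ to be active on one of the two faces. (ii) Establish the matching lower bound by exhibiting a genuine singularity: on the ray in direction $\bc$, the point where the ray exits $[\Gamma]^{ex}$ is either a point where $\Phi^{\{1,2\}}_*(z,w)$ blows up (i.e.\ $\chi(z,w)\to 1$, a branch point of the G-matrix / pole of the resolvent, by the analysis feeding Theorem \ref{pr:calD_eq}), or a point where one of $\hat\bnu_{(*,0)}(z)$, $\hat\bnu_{(0,*)}(w)$ hits its own singularity $z=e^{s_1^*}$ or $w=e^{s_2^*}$ already identified in Theorem \ref{pr:calD1D2_eq}. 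In each case the factorization \eqref{eq:hatnux12s} shows the singular factor is not cancelled (the prefactor vectors $\hat\bnu_{(1,0)}A^{\{1\}}_{-1,0}+\cdots$ and the generating functions $\hat\bnu_{(*,0)}(z)$, $\hat\bnu_{(0,*)}(w)$ are nonzero there), so $z=e^{-\xi_{\bc}/\text{(gcd)}}$ is a true singular point of $f_{\bc}$ and the radius of convergence is exactly attained. (iii) Conclude $\underline{\xi}_{\bc}=\bar\xi_{\bc}=\xi_{\bc}$ from the fact that the relevant singularities are poles or algebraic branch points of finite order, as already remarked after Lemma \ref{le:decayrate_upper}.

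I would model the argument closely on Section 4.1 of \cite{Ozawa22} and Theorem 2.2 of \cite{Ozawa23}, since the only structural novelty here is that $\hat\bnu$ is the occupation measure of a nonnegative (not necessarily stochastic) matrix of 2d-QBD type; but every analytic input — the $G$-matrix functions $G_1(z),G_2(w)$, their branch points at $e^{\bar\theta^{\{1,2\}}_i}$, the pole structure of $\Phi^{\{1\}}_*(z)=(I-A^{\{1\}}_{*,0}(z)-A^{\{1\}}_{*,1}(z)G_1(z))^{-1}$, and the nonvanishing lemmas (Lemmas 5.3, 5.5 of \cite{Ozawa18}) — carries over verbatim under Assumptions \ref{as:QBD_irreducible}--\ref{as:Gamma12D1D2_notempty}. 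The main obstacle I anticipate is step (ii): verifying that the singular factor in \eqref{eq:hatnux12s} is genuinely not annihilated by a vanishing prefactor along the \emph{arbitrary} rational direction $\bc$, since the ray may exit $[\Gamma]^{ex}$ through the ``interior'' part of $\partial\Gamma^{\{1,2\}}$, through a corner of $\Gamma$, or through a face inherited from $[\Gamma^{\{1\}}]^{ex}$ or $[\Gamma^{\{2\}}]^{ex}$, and these cases must be enumerated and the nonvanishing argument adapted to each — this is precisely the case analysis (C1)--(C4) in the proof of Theorem \ref{pr:calD1D2_eq}, now carried out on the two-variable slice rather than on a single coordinate axis.
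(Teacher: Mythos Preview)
Your proposal is correct and aligns with the paper's own treatment: the paper does not give a self-contained proof of this theorem but simply states that it is obtained ``in a manner similar to that used in \cite{Ozawa22,Ozawa23}'' and points to Section~4.1 of \cite{Ozawa22} and Theorem~2.2 of \cite{Ozawa23}, which are exactly the references you build your outline around. (Your parenthetical ``$-\xi_{\bc}=\sup\{\langle\bc,\btheta\rangle;\btheta\in\calD\}$'' carries a stray minus sign, but this is a slip that does not affect the structure of the argument.)
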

\begin{remark}
We can obtain a more specific expression for the asymptotic decay rate $\xi_{\bc}$, see Theorem 2.1 of \cite{Ozawa23}. 
\end{remark}

For $\bc\in\mathbb{Z}^2\setminus\{(0,0)\}$, let $h_{\bc}(n)$ be the asymptotic decay function of the occupation measure satisfying, for some nonzero matrix $\bg_{\bc}$, 
\[
\lim_{n\to\infty} \frac{\hat\bnu_{n \bc}}{h_{\bc}(n)} = \bg_{\bc}. 
\]
In a manner similar to that used in the proof of Theorem 2.1 of \cite{Ozawa18}, we obtain the following (also see the proof of Theorem \ref{pr:calD1D2_eq}). 
\begin{theorem} \label{th:h10h01_eq}
If $\bc=(1,0)$ or $\bc=(0,1)$, then 
\begin{align}
&h_{(1,0)}(n) = \left\{ \begin{array}{ll} 
n^{-\frac{l}{2}} e^{-s_1^* n}, & \mbox{if $s_1^*=\bar\theta_1^{\{1,2\}}$, for some $l\in\{0,1,3,5,...\}$}, \cr
n e^{-s_1^* n}, & \mbox{if $s_1^*=\bar\theta_1^{\{1\}}=\bar\theta_1^*<\bar\theta_1^{\{1,2\}}$}, \cr
e^{-s_1^* n}, & \mbox{otherwise},
\end{array} \right.
\end{align}
\begin{align}
&h_{(0,1)}(n) = \left\{ \begin{array}{ll} 
n^{-\frac{l}{2}} e^{-s_2^* n}, & \mbox{if $s_2^*=\bar\theta_2^{\{1,2\}}$, for some $l\in\{0,1,3,5,...\}$}, \cr
n e^{-s_2^* n}, & \mbox{if $s_2^*=\bar\theta_2^{\{2\}}=\bar\theta_2^*<\bar\theta_2^{\{1,2\}}$}, \cr
e^{-s_2^* n}, & \mbox{otherwise}.
\end{array} \right.
\end{align}
\end{theorem}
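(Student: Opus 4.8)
The plan is to extract the local behaviour of the matrix-valued generating functions $\hat\bnu_{(*,0)}(z)$ near $z=e^{s_1^*}$ and $\hat\bnu_{(0,*)}(w)$ near $w=e^{s_2^*}$, and then apply a Tauberian/transfer argument. Since by Theorem \ref{pr:calD1D2_eq} we know $\calD_0^{\{1\}}=\Lambda_0(s_1^*)$, the function $z\mapsto\hat\bnu_{(*,0)}(z)$ is analytic on the open disk $\Delta_{e^{s_1^*}}$ and $z=e^{s_1^*}$ is its dominant singularity; the nature of that singularity is exactly what the four cases C1--C4 in the proof of Theorem \ref{pr:calD1D2_eq} describe. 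The first step is therefore to revisit that case analysis and, in each case, identify not merely that $z=e^{s_1^*}$ is singular but the precise singular type: a branch point of order one coming from $G_1(z)$ in case C1, a simple pole of $\Phi^{\{1\}}_*(z)$ in cases C2 and C3, and in case C3 an additional simple pole contributed by $\ba(z)$ (through the pole of $\hat\bnu_{(0,*)}(w)$ at $w=e^{\underline{\eta}_2(s_1^*)}$), while in C4 only $\ba(z)$ contributes a simple pole.

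Next I would translate each singular type into the corresponding coefficient asymptotics. For a simple pole at $z=e^{s_1^*}$ one gets $\hat\bnu_{(n,0)}\sim \bg\, e^{-s_1^* n}$; for a pole of order two (case C3, where $\Phi^{\{1\}}_*(z)$ and $\ba(z)$ each contribute a simple pole so the product has order two) one gets $\hat\bnu_{(n,0)}\sim \bg\, n\, e^{-s_1^* n}$; and for a branch point of order one of square-root type (case C1) the standard singularity-analysis transfer theorem (as in the proof of Theorem 2.1 of \cite{Ozawa18}) gives $\hat\bnu_{(n,0)}\sim \bg\, n^{-l/2} e^{-s_1^* n}$ for some odd nonnegative integer $l$ (odd because a square-root branch point yields a half-integer exponent, and higher compositions of such branch points accumulate odd multiples of $1/2$; the even case is excluded by the genuine branching). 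Care is needed to check that the candidate leading matrix $\bg_{\bc}$ is nonzero — this follows in the pole cases from the nonvanishing of the relevant residue matrices (Lemma 5.3 and Lemma 5.5 of \cite{Ozawa18} supply the analogous statements), and in the branch-point case from the nondegeneracy of the Perron projection of $A^{\{1,2\}}_{*,*}(e^{s_1^*},e^{\underline{\eta}_2(s_1^*)})$. The identical argument with roles of the two coordinates interchanged yields the formula for $h_{(0,1)}(n)$, using $\calD_0^{\{2\}}=\Lambda_0(s_2^*)$.

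The main obstacle I expect is the branch-point case C1: one must confirm that $z=e^{s_1^*}=e^{\bar\theta_1^{\{1,2\}}}$ is a genuine algebraic branch point of $\hat\bnu_{(*,0)}(z)$ of the stated half-integer order, that no pole of $\Phi^{\{1\}}_*(z)$ or of $\ba(z)$ coincides with it and alters the exponent, and that the transfer theorem applies — i.e.\ that $\hat\bnu_{(*,0)}(z)$ extends analytically to a $\Delta$-domain (a slit neighbourhood of $e^{s_1^*}$) with the claimed Puiseux-type expansion there. This is precisely the delicate part handled in \cite{Ozawa18} for the stationary distribution, so I would invoke Lemma 4.7 of \cite{Ozawa18} together with Corollary 3.1 and Lemma 3.3 of \cite{Ozawa23} for the local structure of $G_1(z)$, and then check that composing with the rational operations in \eqref{eq:hatnux1s0}, \eqref{eq:Phi1z_series} and \eqref{eq:a_def1} preserves the $\Delta$-analyticity and only multiplies the branch order by an odd integer; the remaining bookkeeping (matching $l$ across the possibly several nested square-root branchings of $G_1$) is routine but must be stated carefully.
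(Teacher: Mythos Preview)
Your approach is essentially identical to the paper's: the paper gives no detailed proof of Theorem~\ref{th:h10h01_eq} but simply points to the case analysis C1--C4 in the proof of Theorem~\ref{pr:calD1D2_eq} together with the proof of Theorem~2.1 of \cite{Ozawa18}, which is exactly the singularity-classification-plus-transfer argument you outline. One small slip to watch in your bookkeeping: you assert $l$ is odd, but the statement allows $l=0$, so your explanation for the exponent in case~C1 needs to accommodate the possibility that the square-root branching of $G_1(z)$ cancels in the full expression~\eqref{eq:hatnux1s0}.
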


In a manner similar to that used in the proof of Theorem 2.2 of \cite{Ozawa23}, we obtain the following. 
\begin{theorem} \label{th:h1c_eq}
For $\bc=(c_1,c_2)\in\mathbb{N}^2$, we have 
\begin{align}
&h_{\bc}(n) = \left\{ \begin{array}{ll} 
n^{-\frac{1}{2}} e^{-\xi_{\bc} n}, & \mbox{if $\bar\eta_2(s_1^*)<s_2^*$, $\bar\eta_1(s_2^*)<s_1^*$ and $\bar\eta_2'(s_1^*) < -\frac{c_1}{c_2} <\bar\eta_1'(s_2^*)^{-1}$}, \cr
e^{-\xi_{\bc} n}, & \mbox{otherwise}.
\end{array} \right.
\end{align}
\end{theorem}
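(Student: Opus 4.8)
The plan is to follow the approach used in the proof of Theorem 2.2 of \cite{Ozawa23}, transferring the exact-asymptotics analysis from the stationary-measure setting to the occupation measure via the generating-function identities established above. The starting point is equation \eqref{eq:hatnux12s}, which expresses $\hat\bnu_*(z,w)$ in terms of the boundary generating functions $\hat\bnu_{(*,0)}(z)$, $\hat\bnu_{(0,*)}(w)$ and the potential generating function $\Phi^{\{1,2\}}_*(z,w)$. Since for a direction $\bc=(c_1,c_2)\in\mathbb{N}^2$ the decay rate is, by Theorem \ref{th:xic_eq}, the minimum of the two linear-programming values over $\calD^{\{1\}}\cap\Gamma^{\{1,2\}}$ and $\calD^{\{2\}}\cap\Gamma^{\{1,2\}}$, and since $\calD_0^{\{1\}}=[\Gamma]^{ex}_{\{1\}}$, $\calD_0^{\{2\}}=[\Gamma]^{ex}_{\{2\}}$ by Theorems \ref{pr:calD1D2_eq} and \ref{pr:calD_eq}, the asymptotics of $\hat\bnu_{n\bc}$ along the ray are governed by the dominant singularity of the univariate section $f_{\bc,j_0,j}(z)=\sum_{n\ge1}z^n\hat\nu_{(\bzero,j_0),(n\bc,j)}$. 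First I would identify, geometrically on the boundary of $\Gamma^{\{1,2\}}$, the point(s) $\btheta^*$ at which the supporting hyperplane with normal $\bc$ touches the relevant convex region; the location of $\btheta^*$ relative to the half-spaces $\Lambda^{\{1\}}(s_1^*)$ and $\Lambda^{\{2\}}(s_2^*)$ splits the analysis into the generic case (where $\btheta^*$ lies strictly inside both half-spaces and on the smooth part of $\partial\Gamma^{\{1,2\}}$) and the degenerate cases where it coincides with a corner or with the truncation by $\theta_1=s_1^*$ or $\theta_2=s_2^*$.

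Next I would carry out the singularity analysis in each case. In the generic case, the dominant singularity of $f_{\bc,j_0,j}(z)$ comes from the factor $\Phi^{\{1,2\}}_*(z,w)=(I-A^{\{1,2\}}_{*,*}(z,w))^{-1}$ (up to the invertible prefactors), evaluated along the curve $w=w(z)$ determined by extracting the diagonal $\bc$-section; this is a simple pole of $(I-A^{\{1,2\}}_{*,*})^{-1}$ against which a two-dimensional saddle-point / Cauchy-integral argument applies, yielding the $n^{-1/2}$ correction — precisely the situation described by the conditions $\bar\eta_2(s_1^*)<s_2^*$, $\bar\eta_1(s_2^*)<s_1^*$ together with the slope condition $\bar\eta_2'(s_1^*)<-c_1/c_2<\bar\eta_1'(s_2^*)^{-1}$, which says that the $\bc$-supporting point on $\partial\Gamma^{\{1,2\}}$ is strictly interior to the feasible arc and lies on a strictly-convex (non-flat, non-corner) piece. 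When one of these fails, the dominant singularity is either a pole of $\hat\bnu_{(*,0)}(z)$ or $\hat\bnu_{(0,*)}(w)$ pulled in from the boundary (giving a pure geometric term $e^{-\xi_{\bc}n}$ with no polynomial factor), or it sits at a branch point of $G_1$ or $G_2$ that is screened off by the half-space truncation, again giving the $e^{-\xi_{\bc}n}$ rate without correction. The bookkeeping here is essentially the case split in the proof of Theorem 2.1 of \cite{Ozawa18} combined with the LP geometry already set up for Theorems \ref{th:xic_eq} and \ref{pr:calD_eq}.

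The estimates needed are: (i) that the prefactor matrices $-(\hat\bnu_{(1,0)}A^{\{1\}}_{-1,0}+\hat\bnu_{(0,1)}A^{\{2\}}_{0,-1}+\hat\bnu_{(1,1)}A^{\{1,2\}}_{-1,-1})$ and the coefficient matrices $A^{\{1\}}_{*,*}(z,w)-A^{\{1,2\}}_{*,*}(z,w)$, $A^{\{2\}}_{*,*}(z,w)-A^{\{1,2\}}_{*,*}(z,w)$ are analytic and do not vanish at the relevant singular point, so that the singularity genuinely appears in every entry (this is the analogue of Lemma 5.3 of \cite{Ozawa18} and uses irreducibility from Assumption \ref{as:Y12_onZpZp_irreducible}); (ii) a Tauberian/transfer step converting the identified local behaviour of $f_{\bc,j_0,j}(z)$ near its dominant singularity into the claimed asymptotics of the coefficients, which in the $n^{-1/2}$ case requires the standard singularity-analysis transfer for algebraic singularities of square-root type and in the other cases is just the residue at a simple pole. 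I expect the main obstacle to be (i): verifying non-degeneracy of the numerator matrices along the whole boundary arc, including at the corners of $\Gamma^{\{1,2\}}$ and at the half-space truncation points, since that is where the occupation-measure setting could in principle differ from the stationary case (the vector $\hat\bnu$ is not a probability measure and is only assumed finite under Assumption \ref{as:hatPhi_finite}). I would handle this by re-running the arguments of Section 5 of \cite{Ozawa18}, checking that each only uses irreducibility and entry-wise finiteness of $\hat\Phi$, not stochasticity, and by invoking Proposition \ref{pr:xi_equality} to ensure the singularity is phase-independent and common to all entries, so that the leading matrix $\bg_{\bc}$ in $\lim_{n\to\infty}\hat\bnu_{n\bc}/h_{\bc}(n)=\bg_{\bc}$ is nonzero.
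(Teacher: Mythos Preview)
Your proposal is correct and takes essentially the same approach as the paper: the paper does not give a standalone proof of this theorem but simply states that it is obtained ``in a manner similar to that used in the proof of Theorem 2.2 of \cite{Ozawa23},'' and your outline---starting from identity \eqref{eq:hatnux12s}, performing the geometric case split on the location of the $\bc$-supporting point relative to $\partial\Gamma^{\{1,2\}}$, $\Lambda^{\{1\}}(s_1^*)$, $\Lambda^{\{2\}}(s_2^*)$, and then applying singularity analysis as in \cite{Ozawa18,Ozawa23}---is exactly that method. Your caution about verifying the non-degeneracy estimates (i) in the occupation-measure setting (finiteness of $\hat\Phi$ and irreducibility rather than stochasticity) is well placed and is the only point where genuine checking is needed beyond transcription.
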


%
%

\section{Asymptotics of the hitting measure} \label{sec:asymptotics_h}

From \eqref{eq:hatg_definition}, we obtain
\begin{equation}
\hat\bg^\top = \hat\bt_{10}^\top \hat\Phi^\top = \hat\bt_{10}^\top \sum_{n=0}^\infty (\hat T^\top)^n.  
\label{eq:hatg_T}
\end{equation}
Hence, the hitting measure of the nonnegative matrix $T$ is the occupation measure of the transpose of $T$. We know that the potential matrix of the transpose of a nonnegative matrix is identical to the transpose of the potential matrix of the original nonnegative matrix and the convergence parameter of the transpose of a nonnegative matrix is identical to the convergence parameter of the original nonnegative matrix. This implies that the asymptotic properties of the hitting measure can be obtained by using the results for the occupation measure. 
Furthermore, the convergence domains and asymptotic decay rates of the occupation measure defined on the nonnegative matrix $T$ are given in terms of the regions $\Gamma^{\{1\}}$, $\Gamma^{\{2\}}$ and $\Gamma^{\{1,2\}}$. Hence, in order to obtain asymptotic properties of the hitting measure, it suffices to demonstrate that those regions correspond to the regions $\Gamma_R^{\{1\}}$, $\Gamma_R^{\{2\}}$ and $\Gamma_R^{\{1,2\}}$, respectively, in the case of $T^\top$.

%
Represent $T^\top$ in block form as $T^\top=\begin{pmatrix} U_{\bx,\bx'}; \bx,\bx'\in\mathbb{Z}_+^2 \end{pmatrix}$, then its nonzero blocks are given as follows:
for $\bx=(0,0)$,
\begin{align*}
U_{(0,0),\bx'} = \left\{ \begin{array}{ll} 
(A^\emptyset_{0,0})^\top, & \bx'=(0,0), \cr
(A^{\{1\}}_{-1,0})^\top, &  \bx'=(1,0), \cr
(A^{\{2\}}_{0,-1})^\top, & \bx'=(0,1), \cr
(A^{\{1,2\}}_{-1,-1})^\top, &  \bx'=(1,1), 
\end{array} \right.
\end{align*}
for $\bx=(1,0)$,
\begin{align*}
U_{(1,0),\bx'} = \left\{ \begin{array}{ll} 
(A^\emptyset_{1,0})^\top, & \bx'=(0,0), \cr
(A^{\{1\}}_{1-x_1',0})^\top, & \bx'=(x_1',0)\in\{(1,0),(2,0)\}, \cr
(A^{\{2\}}_{1,-1})^\top, & \bx'=(0,1), \cr
(A^{\{1,2\}}_{1-x_1',-1})^\top, & \bx'=(x_1',1)\in\{(2,1),(1,1)\}, 
\end{array} \right.
\end{align*}
for $\bx=(0,1)$,
\begin{align*}
U_{(0,1),\bx'} = \left\{ \begin{array}{ll} 
(A^\emptyset_{0,1})^\top, & \bx'=(0,0), \cr
(A^{\{1\}}_{-1,1})^\top, & \bx'=(1,0), \cr
(A^{\{2\}}_{0,1-x_2'})^\top, & \bx'=(0,x_2')\in\{(0,1),(0,2)\}, \cr
(A^{\{1,2\}}_{-1,1-x_2'})^\top, & \bx'=(1,x_2)\in\{(1,2),(1,1)\}, 
\end{array} \right.
\end{align*}
for $\bx=(1,1)$,
\begin{align*}
U_{(1,1),\bx'} = \left\{ \begin{array}{ll} 
(A^\emptyset_{1,1})^\top, & \bx'=(0,0), \cr
(A^{\{1\}}_{1-x_1',1})^\top, & \bx'=(x_1',0)\in\{(1,0),(2,0)\}, \cr
(A^{\{2\}}_{1,1-x_2'})^\top, & \bx'=(0,x_2')\in\{(0,1),(0,2)\}, \cr
(A^{\{1,2\}}_{1-x_1',1-x_2'})^\top, & \bx'=(x_1',x_2')\in\{(1,1),(2,1),(2,2),(1,2)\}, 
\end{array} \right.
\end{align*}
for $\bx=(x_1,0)\in\mathbb{Z}_{\ge 2}\times\{0\}$,
\begin{align*}
U_{(x_1,0),\bx'} = \left\{ \begin{array}{ll} 
(A^{\{1\}}_{x_1-x_1',0})^\top, & \bx'-\bx=(x_1'-x_1,0)\in\{-1,0,1\}\times\{0\}, \cr
(A^{\{1,2\}}_{x_1-x_1',-1})^\top, &  \bx'-\bx=(x_1'-x_1,1)\in\{-1,0,1\}\times\{1\},
\end{array} \right.
\end{align*}
for $\bx=(x_1,1)\in\mathbb{Z}_{\ge 2}\times\{1\}$,
\begin{align*}
U_{(x_1,1),\bx'} = \left\{ \begin{array}{ll} 
(A^{\{1\}}_{x_1-x_1',1})^\top, & \bx'-\bx=(x_1'-x_1,-1)\in\{-1,0,1\}\times\{-1\}, \cr
(A^{\{1,2\}}_{x_1-x_1',1-x_2'})^\top, &  \bx'-\bx=(x_1'-x_1,x_2'-1)\in\{-1,0,1\}\times\{0,1\},
\end{array} \right.
\end{align*}
for $\bx=(0,x_2)\in\{0\}\times\mathbb{Z}_{\ge 2}$,
\begin{align*}
U_{(0,x_2),\bx'} = \left\{ \begin{array}{ll} 
(A^{\{2\}}_{0,x_2-x_2'})^\top, & \bx'-\bx=(0,x_2'-x_2)\in\{0\}\times\{-1,0,1\} \cr
(A^{\{1,2\}}_{-1,x_2-x_2'})^\top, &  \bx'-\bx=(1,x_2'-x_2)\in\{1\}\times\{-1,0,1\},
\end{array} \right.
\end{align*}
for $\bx=(1,x_2)\in\{1\}\times\mathbb{Z}_{\ge 2}$,
\begin{align*}
U_{(1,x_2),\bx'} = \left\{ \begin{array}{ll} 
(A^{\{2\}}_{1,x_2-x_2'})^\top, & \bx'-\bx=(-1,x_2'-x_2)\in\{-1\}\times\{-1,0,1\}, \cr
(A^{\{1,2\}}_{1-x_1',x_2-x_2'})^\top, &  \bx'-\bx=(x_1'-1,x_2'-x_2)\in\{0,1\}\times\{-1,0,1\},
\end{array} \right.
\end{align*}
and for $\bx=(x_1,x_2)\in\mathbb{Z}_{\ge 2}^2$ and for $ \bx'-\bx=(x_1'-x_1,x_2'-x_2)\in\{-1,0,1\}^2$,
\begin{align}
U_{\bx,\bx'} = (A^{\{1,2\}}_{x_1-x_1',x_2-x_2'})^\top.
\label{eq:Uxxp_def}
\end{align}
The block structure of $T^\top$ is slightly different from that of $T$, but the difference between them is restricted to near the boundaries and our results for the occupation measure can be applied to $T^\top$.
%
According to \eqref{eq:Uxxp_def}, define blocks $B^{\{1,2\}}_{i,j}$ for $i,j\in\{-1,0,1\}$ as $B^{\{1,2\}}_{i,j}=(A^{\{1,2\}}_{-i,-j})^\top$ and let $B^{\{1,2\}}_{*,*}(z_1,z_2)$ be the matrix-valued generating function corresponding to $A^{\{1,2\}}_{*,*}(z_1,z_2)$ of $T$, defined as 
\[
B^{\{1,2\}}_{*,*}(z_1,z_2) 
= \sum_{i_1,i_2\in\{-1,0,1\}} z_1^{i_1} z_2^{i_2} B^{\{1,2\}}_{i_1,i_2}
= (A^{\{1,2\}}_{*,*}(z_1^{-1},z_2^{-1}))^\top. 
\]
For $x_1,x_2\in\mathbb{R}_+\setminus\{0\}$, we have 
\begin{equation}
\cp(B^{\{1,2\}}_{*,*}(x_1,x_2)) = \cp((A^{\{1,2\}}_{*,*}(x_1^{-1},x_2^{-1}))^\top) = \cp(A^{\{1,2\}}_{*,*}(x_1^{-1},x_2^{-1})), 
\end{equation}
and this leads us to 
\begin{align}
\Gamma_R^{\{1,2\}} 
&= \{(\theta_1,\theta_2)\in\mathbb{R}^2; \cp(A^{\{1,2\}}_{*,*}(e^{-\theta_1},e^{-\theta_2}))>1 \} \cr
&=\{(\theta_1,\theta_2)\in\mathbb{R}^2; \cp(B^{\{1,2\}}_{*,*}(e^{\theta_1},e^{\theta_2}))>1 \}. 
\end{align}
Hence, we see that the region $\Gamma_R^{\{1,2\}}$ for $T^\top$ corresponds to the region $\Gamma^{\{1,2\}} $ for $T$.
%
For $i\in\{-1,0,1\}$, define $\bar B^{\{1\}}_i$ and $\bar B^{\{2\}}_i$ in the same way as that used in defining $\bar A^{\{1\}}_i$ and $\bar A^{\{2\}}_i$ in Section \ref{sec:intro}. To be precise, 
\begin{align*}
&\bar B^{\{1\}}_i = \left( \bar B^{\{1\}}_{i,(x_2,x_2')}; x_2,x_2'\in\mathbb{Z}_+ \right),\\
&\bar B^{\{1\}}_{i,(x_2,x_2')} = \left\{ \begin{array}{ll}
A^{\{1\}}_{-i,0}, & \mbox{if $x_2=0$ and $x_2'=0$}, \cr
A^{\{1,2\}}_{-i,-1}, & \mbox{if $x_2=0$ and $x_2'=1$}, \cr
A^{\{1\}}_{-i,1}, & \mbox{if $x_2=1$ and $x_2'=0$}, \cr
A^{\{1,2\}}_{-i,1-x_2'}, & \mbox{if $x_2=1$ and $x_2'-1\in\{0,1\}$}, \cr
A^{\{1,2\}}_{-i,x_2-x_2'}, & \mbox{if $x_2\ge 2$ and $x_2'-x_2\in\{-1,0,1\}$}, \cr
O, & \mbox{otherwise}, 
\end{array} \right. 
\end{align*}
\begin{align*}
&\bar B^{\{2\}}_i = \left( \bar B^{\{2\}}_{i,(x_1,x_1')}; x_1,x_1'\in\mathbb{Z}_+ \right),\\
&\bar B^{\{2\}}_{i,(x_1,x_1')} = \left\{ \begin{array}{ll}
A^{\{2\}}_{0,-i}, & \mbox{if $x_1=0$ and $x_1'=0$}, \cr
A^{\{1,2\}}_{-1,-i}, & \mbox{if $x_1=0$ and $x_1'=1$}, \cr
A^{\{2\}}_{1,-i}, & \mbox{if $x_1=1$ and $x_1'=0$}, \cr
A^{\{1,2\}}_{1-x_1',-i}, & \mbox{if $x_1=1$ and $x_1'-1\in\{0,1\}$}, \cr
A^{\{1,2\}}_{x_1-x_1',-i}, & \mbox{if $x_1\ge 2$ and $x_1'-x_1\in\{-1,0,1\}$}, \cr
O, & \mbox{otherwise}.
\end{array} \right.
\end{align*}
By the definition, we have $\bar B^{\{1\}}_i=(\bar A^{\{1\}}_{-i})^\top$ and $\bar B^{\{2\}}_i=(\bar A^{\{2\}}_{-i})^\top$. Let $\bar B^{\{1\}}_*(z_1)$ and $\bar B^{\{2\}}_*(z_2)$ be the matrix-valued generating functions corresponding to $\bar A^{\{1\}}_*(z_1)$ and $\bar A^{\{2\}}_*(z_2)$ of $T$, respectively, defined as
\[
\bar B^{\{1\}}_*(z_1) = \sum_{i\in\{-1,0,1\}} z_1^{i} \bar B^{\{1\}}_{i}=(\bar A^{\{1\}}_*(z_1^{-1}))^\top,\quad
\bar B^{\{2\}}_*(z_2) = \sum_{i\in\{-1,0,1\}} z_2^{i} \bar B^{\{2\}}_{i}=(\bar A^{\{2\}}_*(z_2^{-1}))^\top.
\]
We have 
\begin{align}
\Gamma_R^{\{1\}} 
&= \{(\theta_1,\theta_2)\in\mathbb{R}^2; \cp(\bar A^{\{1\}}_*(e^{-\theta_1}))>1\ \mbox{for some $\theta_2\in\mathbb{R}$} \} \cr
&=\{(\theta_1,\theta_2)\in\mathbb{R}^2; \cp(\bar B^{\{1\}}_*(e^{\theta_1}))>1\ \mbox{for some $\theta_2\in\mathbb{R}$} \}, \\
 \Gamma_R^{\{2\}} 
&= \{(\theta_1,\theta_2)\in\mathbb{R}^2; \cp(\bar A^{\{2\}}_*(e^{-\theta_2}))>1\ \mbox{for some $\theta_1\in\mathbb{R}$} \} \cr
&=\{(\theta_1,\theta_2)\in\mathbb{R}^2; \cp(\bar B^{\{2\}}_*(e^{\theta_2}))>1\ \mbox{for some $\theta_1\in\mathbb{R}$} \}.
\end{align}
Hence, we see that the regions $\Gamma_R^{\{1\}}$ and $\Gamma_R^{\{2\}}$ for $T^\top$ corresponds to the regions $\Gamma^{\{1\}}$ and $\Gamma^{\{2\}}$ for $T$, respectively.
As a result, replacing $\Gamma^{\{1\}} $, $\Gamma^{\{2\}}$ and $\Gamma^{\{1,2\}}$ with $\Gamma_R^{\{1\}}$, $\Gamma_R^{\{2\}}$ and $\Gamma_R^{\{1,2\}}$, respectively, we obtain the asymptotic properties of the hitting measure by using the results in Section \ref{sec:asymptotics_o}. 

%
Here we mention only the asymptotic decay rates in directions $\bc=(1,0)$ and $\bc=(0,1)$ for the hitting measure. 
Consider the following optimization problem.
\begin{align}
\begin{array}{ll}
\mbox{Maximize} & s_1+s_2, \cr
\mbox{subject to} & s_1 \le \sup\left\{\theta_1\in\mathbb{R}; (\theta_1,\theta_2)\in \Gamma_R^{\{1\}}\cap\Lambda^{\{2\}}(s_2)\cap\Gamma_R^{\{1,2\}}\ \mbox{for some $\theta_2\in\mathbb{R}$} \right\}, \cr
&  s_2 \le \sup\left\{\theta_2\in\mathbb{R}; (\theta_1,\theta_2)\in\Lambda^{\{1\}}(s_1)\cap\Gamma_R^{\{2\}}\cap\Gamma_R^{\{1,2\}}\ \mbox{for some $\theta_1\in\mathbb{R}$} \right\}.
\end{array}
\end{align}
Under Assumption \ref{as:Gamma1212_notempty}, this optimization problem is also feasible. Since the regions $\Gamma_R^{\{1\}}$, $\Gamma_R^{\{2\}}$ and $\Gamma_R^{\{1,2\}}$ are convex sets, it has a unique optimal solution $(s_{R,1}^*,s_{R,2}^*)$, which satisfy 
\begin{align}
s_{R,1}^* 
&= -\inf\!\left\{\theta_1\in\mathbb{R}; (\theta_1,\theta_2)\in\Gamma^{\{1\}}\cap[\Gamma^{\{2\}}]^{ex}\cap\Gamma^{\{1,2\}} \right\}, 
\label{eq:sR1s_eq1}\\
s_{R,2}^* 
&= -\inf\!\left\{\theta_2\in\mathbb{R}; (\theta_1,\theta_2)\in[\Gamma^{\{1\}}]^{ex}\cap\Gamma^{\{2\}}\cap\Gamma^{\{1,2\}} \right\}.  
\label{eq:sR2s_eq1}
\end{align}
The asymptotic decay rate of the hitting measure in direction $\bc=(1,0)$ is given by $s_{R,1}^*$ and that in direction $\bc=(0,1)$ by $s_{R,2}^*$.

%
%
\section{Concluding remark} \label{sec:conclusion}

We defined the occupation measure and hitting measure for a nonnegative matrix having the same block structure as the transition probability matrix of a 2d-QBD process, and obtained their asymptotic properties. In the process of analysis, we used the G-matrix functions $G_1(z)$ and $G_2(z)$, defined in Section \ref{sec:mgf_potential}.
If $z$ is a positive real number, each of $(G_1(z)^n,n\ge 1)$ and $(G_2(z)^n,n\ge 1)$ corresponds to the hitting measure for a nonnegative matrix having the same block structure as the transition probability matrix of a one-dimensional QBD process. In other words, we used one-dimensional QBD-type models to analyze a two-dimensional QBD-type model. 
Our approach used in the paper is useful in studying asymptotics of the stationary distribution in a three-dimensional reflecting random walk (3d-RRW for short) \cite{Ozawa25b}, where the asymptotic properties of the hitting measure for a nonnegative matrix of 2d-RRW type are used. A 2d-QBD process is a 2d-RRW with a background process and our results can be applied to nonnegative matrices of 2d-RRW type.

%
%

\end{document}